\documentclass{amsart}
\usepackage{stmaryrd}
\usepackage{mathtools}
\usepackage{color}
\usepackage{graphicx}
\usepackage{tikz-cd}
\usepackage[all,cmtip]{xy}
\usepackage{amsfonts}   
\usepackage{amsmath}
\usepackage{amsthm}
\usepackage{amssymb}
\usepackage{latexsym}
\newfont{\msam}{msam10}

\newtheorem{theorem}{Theorem}
\newtheorem{proposition}[theorem]{Proposition}
\newtheorem{corollary}[theorem]{Corollary}
\newtheorem{lemma}[theorem]{Lemma}
\theoremstyle{definition}

\newtheorem{remark}[theorem]{Remark}

\newtheorem{conjecture}[theorem]{Conjecture}

\newtheorem*{ack}{Acknowlegements}

\DeclareMathOperator{\spec}{\ensuremath{Spec}}

\DeclareMathOperator{\gr}{\ensuremath{gr}}

\let\nc\newcommand

\nc{\la}{\label}

\def\bthm{\begin{theorem}}
\def\ethm{\end{theorem}}
\def\blemma{\begin{lemma}}
\def\elemma{\end{lemma}}
\def\bproof{\begin{proof}}
\def\eproof{\end{proof}}
\def\bprop{\begin{proposition}}
\def\eprop{\end{proposition}}
\def\bcor{\begin{corollary}}
\def\ecor{\end{corollary}}

\def\Z{\mathbb{Z}}

\def\O{\mathcal{O}}

\def\N{\mathbb{N}}

\def\z{\mathbb{Z}}

\def\m{\mathfrak{m}}
\def\c{\mathbb{C}}
\def\CC{\mathcal{C}}

\def\gr#1{\mbox{\sf gr}(#1)}

\def\ni{\noindent}
\nc{\Hom}{{\rm{Hom}}}
\nc{\chara}{{\rm{char}}}
\nc{\Ext}{{\rm{Ext}}}
\nc{\HOM}{\underline{\rm{Hom}}}
\nc{\EXT}{\underline{\rm{Ext}}}
\nc{\TOR}{\underline{\rm{Tor}}}
\nc{\End}{{\rm{End}}}
\nc{\GL}{{\rm{GL}}}
\nc{\SL}{{\rm{SL}}}
\nc{\Rep}{{\rm{Rep}}}
\nc{\dlim}{\varinjlim}

\newcommand{\h}{{\mathfrak{h}}}

\newcommand{\Aut}{{\rm{Aut}_{\c}}}

\newcommand{\Der}{{\rm{Der}}}

\newcommand{\Tr}{{\rm{Tr}}}

\begin{document}
\begin{abstract}
 Associated with each finite subgroup $\Gamma$ of $\SL_2(\c)$
 there is a family of noncommutative algebras $O_\tau(\Gamma)$
 quantizing $\c^2/\!\!/\Gamma$. 
 Let $G_\Gamma$ be the group of $\Gamma$-equivariant automorphisms of $O_\tau$.
 In \cite{E} one of the authors defined and studied a natural action of $G_\Gamma$
  on certain quiver varieties associated with $\Gamma$. 
 He established a $G_\Gamma$-equivariant bijective correspondence between
quiver varieties and the space of isomorphism classes of $O_\tau$-ideals.
The main theorem in this paper states that when $\Gamma$ is a cyclic group, 
the action of $G_\Gamma$ on each quiver variety is transitive.
This generalizes an earlier result due to Berest and Wilson who showed
the transitivity of the automorphism group of the first Weyl algebra on
the Calogero-Moser spaces.
Our result has two important implications. 
First, it confirms the Bockland-Le Bruyn conjecture for cyclic quiver varieties.
Second, it will be used to give a complete classification of algebras 
  Morita equivalent to $O_\tau(\Gamma)$. 
 \end{abstract}

\title{ On transitive action on quiver varieties}
\author[X. Chen]{Xiaojun Chen}
\author[A. Eshmatov]{Alimjon Eshmatov}
\author[F. Eshmatov]{Farkhod Eshmatov}
\author[A. Tikaradze ]{Akaki Tikaradze}

\address{X. C. and F. E.: School of Mathematics, Sichuan University, Chengdu, Sichuan Province
610064, People's Republic of China}
\email{xjchen@scu.edu.cn, olimjon55@hotmail.com}

\address{A. E. and A. T.: Department of Mathematics and Statistics, University of Toledo, Toledo, OH 43606-3390, USA}
\email{alimjon.eshmatov@utoledo.edu, Akaki.Tikaradze@utoledo.edu}

\date{November 2, 2018}

\maketitle
\section{Introduction}
We begin by recalling a  result due 
to Y. Berest and G. Wilson
which motivated this paper.
The \textit{Calogero-Moser} space $\mathcal{C}_n$  ($n\ge 1$) is defined as
$$  \mathcal{C}_n:= \{ (X,Y) \in \mathrm{Mat}_{n}(\c) \times \mathrm{Mat}_{n}(\c) 
\, : \, \mathrm{rank}([X,Y] +I_n)=1\} /\!/ \mathrm{PGL}_n(\c)\, .
$$
Named after a class of integrable systems in classical mechanics (see \cite{KKS})
these algebraic varieties play an important role in several areas such as
geometry, integrable systems and representation theory
 (see, e.g., \cite{N}, \cite{EG}, \cite{Eti}, \cite{Go} and references therein). 
 They were studied in detail in \cite{W}, where it was shown (among other things) 
 that $\mathcal{C}_n$ is smooth affine irreducible complex 
 symplectic variety of dimension $2n$. Let $G$ be 
  the automorphism group of the free algebra $\c\langle x,y\rangle$
preserving the commutator $xy-yx$.
Then $G$ acts naturally on $\mathcal{C}_n$, by viewing  $\mathcal{C}_n$
as a subvariety of $n$-dimensional representations of  $\c\langle x,y\rangle$
(see Section \ref{sec3} for precise definition).
The main result of \cite{BW} 
states that each $\mathcal{C}_n$ carries a \textit{transitive}
$G$-action (see \textit{loc. cit.} Theorem~$1.3$).
%
It is worth pointing out that here the group $G$ is considered as a discrete group. 
However, since $\mathcal{C}_n$ 
is an affine variety and a $G$-orbit, 
these suggest that $G$ should possess a structure of an algebraic group.
Indeed, following Shafarevich \cite{Sh}, $G$ viewed as an automorphism group of $\c\langle x,y\rangle$ 
has a natural structure of an infinite-dimensional algebraic group such that the
$G$-action  on $\mathcal{C}_n$ is algebraic (see \cite{BEE} and \cite[Section $11$]{BW}).

Next, it is easy to see that $G$ preserves
  the symplectic structure on $\mathcal{C}_n$.
  This led Berest and Wilson to  conjecture that   $\mathcal{C}_n$ is
a coadjoint orbit of $G$ or possibly a central extension of $G$. 
 V. Ginzburg \cite{Gi} and R. Bockland-L. Le Bruyn \cite{BL}
suggested and proved a quiver generalization of this conjecture.
Let $Q=(I,E)$ be a quiver  with the vertex set $I$ and the edge set $E$.
 Then in \cite{N1}, for $\tau \in \c^{I}$  and $\beta \in \z^I_{\ge 0}$,
 Nakajima defined the \textit{quiver variety} 
$\mathfrak{M}^{\tau}(Q,\beta)$ associated to $Q$. 
Let $\bar{Q}$ be the double quiver of $Q$.
Then the commutator quotient  space $\mathbb{N}_{Q}:=\c\bar{Q}/[\c\bar{Q},\c\bar{Q}]$
of the path algebra $\c\bar{Q}$ has a Lie algebra structure, called the \textit{necklace Lie algebra}  
(see  \cite{Gi} and \cite{BL}). 
The elements of $\mathbb{N}_{Q}$, called the \textit{necklace words}, are spanned by  oriented cycles in 
$\bar{Q}$ considered up to cyclic permutations.
It is easy to see that $\mathbb{N}_{Q}$ is a central extension of $\Der_{\omega}(\c\bar{Q})$,
the Lie algebra of
derivations of $\c\bar{Q}$  vanishing at $\omega:=\sum_{a\in E}[a,a^*]$.
Explicitly, there is a short exact sequence of Lie algebras
$$ 0\longrightarrow S \longrightarrow \mathbb{N}_{Q} 
\longrightarrow \Der_{\omega}(\c\bar{Q}) \longrightarrow 0 \, , $$
where $S$ is the abelian Lie algebra on the vector space
spanned by vertices of $Q$. 
The main result  of \cite{Gi} and \cite{BL} states that  $\mathfrak{M}^{\tau}(Q,\beta)$ 
is a coadjoint orbit of $\mathbb{N}_{Q}$. 
This statement may be interpreted as follows. 
Let $G_\omega:=G_\omega(Q)$ be the group of $S$-algebra
automorphisms of $\c\bar{Q}$ preserving $\omega$.
 Then $\Der_{\omega}(\c\bar{Q})$ may be thought as the Lie algebra
of $G_{\omega}$ and 
the coadjointness result could be viewed 
as an infinitesimal transitivite action of the central extension  of $G_{\omega}$.

Next define an algebra $A_{Q}:=\c[\N_Q]\otimes \c\bar{Q}$ equipped with
 a natural \textit{trace map} $\Tr:A_{Q} \to \c[\N_Q]$
sending a closed path in $\c\bar{Q}$ to the corresponding necklace word
and an open path to zero. Let $\mathrm{Aut}_Q$ be trace
preserving $S$-algebra automorphisms of $A_{Q}$
fixing $\omega$.  Then $G$-transitivity on $\mathcal{C}_n$
 suggests the following conjecture for a natural action of $\mathrm{Aut}_Q$ 
on  $\mathfrak{M}^{\tau}(Q,\beta)$:
%
%
\begin{conjecture} \cite[Conjecture 5.8]{BL} 
\label{LBB}
For any quiver $Q$ the group
$\mathrm{Aut}_Q$ acts transitively on $\mathfrak{M}^{\tau}(Q,\beta)$.
\end{conjecture}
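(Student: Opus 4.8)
The plan is to deduce global transitivity from the infinitesimal transitivity already encoded in the coadjoint-orbit theorem of \cite{Gi} and \cite{BL}, using the standard principle that on a smooth connected variety a group action all of whose orbits are open must be transitive. Concretely, I would first invoke the coadjointness of $\mathfrak{M}^{\tau}(Q,\beta)$ under $\N_Q$: at every point $m$ the fundamental vector fields of the necklace Lie algebra span the whole tangent space $T_m\mathfrak{M}^{\tau}(Q,\beta)$. Modulo the central subalgebra $S$, these are exactly the vector fields generated by the derivations in $\Der_\omega(\c\bar{Q})$, i.e.\ by the Lie algebra of $G_\omega$. Thus the infinitesimal action obtained by differentiating the natural $\mathrm{Aut}_Q$-action on representations is already transitive on tangent spaces.

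The second step is to integrate these infinitesimal symmetries to genuine elements of $\mathrm{Aut}_Q$. For each necklace word $w\in\N_Q$ one has a Hamiltonian derivation $\{w,-\}$, and the goal is to show that a spanning family of such derivations exponentiate to one-parameter subgroups $t\mapsto \exp(t\,\{w,-\})$ inside $\mathrm{Aut}_Q$. I would reduce to \emph{elementary} necklaces --- cycles supported on a single loop at a vertex or on a single $2$-cycle $aa^*$ --- for which the associated flow is an explicit triangular, hence locally nilpotent, substitution of arrows that manifestly preserves both the trace and $\omega$. Expressing a general element of $\Der_\omega(\c\bar{Q})$ through such elementary pieces, and controlling the resulting composition in Shafarevich's ind-group structure on $\mathrm{Aut}_Q$, would produce enough one-parameter subgroups whose velocity vectors already span each tangent space.

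Granting the integration step, each $\mathrm{Aut}_Q$-orbit $\mathcal{O}$ contains, through any of its points, the images of all these one-parameter subgroups; since their velocities span the tangent space and the variety is smooth in the coadjoint-orbit setting, $\mathcal{O}$ is open. Because $\mathfrak{M}^{\tau}(Q,\beta)$ is irreducible for the relevant $(\tau,\beta)$, by the smoothness and connectedness results of Nakajima \cite{N1} and Crawley--Boevey, the complement of $\mathcal{O}$ is a union of other open orbits and hence open, so $\mathcal{O}$ is also closed; a connected space cannot be a disjoint union of more than one nonempty clopen orbit, forcing a single orbit and thus transitivity. Reflection functors can be used to move a given $(\tau,\beta)$ to a convenient representative, provided one checks that they intertwine the $\mathrm{Aut}_Q$-actions.

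I expect the integration step to be the crucial obstacle. For quivers attached to cyclic groups the generalized Weyl algebra structure furnishes the needed explicit flows, but for an arbitrary quiver the necklace derivations need not be locally nilpotent, the path algebra $\c\bar{Q}$ is genuinely noncommutative with no convenient normal form, and exponentiating a general element of $\Der_\omega(\c\bar{Q})$ as a trace-preserving automorphism fixing $\omega$ is delicate. Making the elementary-to-general reduction effective --- and ensuring the resulting one-parameter subgroups are algebraic in the ind-group structure on $\mathrm{Aut}_Q$ --- is where the main work lies.
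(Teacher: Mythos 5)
There is a genuine gap, and it sits exactly where you suspect: the integration step. The coadjoint-orbit theorem of Ginzburg and Bocklandt--Le Bruyn gives only \emph{infinitesimal} transitivity: the Hamiltonian vector fields $\{w,-\}$ attached to necklace words span each tangent space. To run your open-orbit argument you must exhibit, through every point, honest one-parameter subgroups of $\mathrm{Aut}_Q$ whose velocity vectors span the tangent space; but a general necklace derivation is not locally nilpotent, and there is no known procedure for exponentiating it to a trace-preserving algebraic automorphism of $A_Q$ fixing $\omega$ (nor even for identifying $\Der_\omega(\c\bar{Q})$ with the set of \emph{integrable} directions of the ind-group $G_\omega$). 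Your proposed reduction to ``elementary'' necklaces does not close this hole: the elementary flows you can actually write down (triangular substitutions such as $a\mapsto a$, $a^*\mapsto a^*+\nu\,p(a)$) exponentiate fine, but their Hamiltonians form a very thin subset of $\N_Q$, and it is not at all automatic that the one-parameter groups they generate have velocities spanning every tangent space. Establishing that is a theorem, not a formality, and without it the orbits need not be open. In short, your argument, if completed, would resolve the conjecture for \emph{all} quivers --- which is precisely why it cannot be completed by the stated means; the conjecture remains open in general.

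It is worth comparing with what the paper actually does, since the paper does \emph{not} prove the conjecture for arbitrary $Q$: it confirms it only for the cyclic quivers $Q_m$. The paper's strategy inverts yours. Instead of trying to integrate all necklace Hamiltonians, it works only with the explicitly integrable triangular flows $\psi_{k,\mu}$, $\phi_{k,\nu}$ generating $G_m\subset G_\omega$, and the entire difficulty is transferred to showing that the Hamiltonians of \emph{these specific} flows already Poisson-generate the coordinate ring. That is done by (i) proving $G_m$-equivariance of reflection functors to reduce any $\mathfrak{M}^\tau(Q_m,\beta)$ to a generalized Calogero--Moser space, (ii) transporting the problem through the Etingof--Ginzburg isomorphism to the center $Z_c$ of a rational Cherednik algebra, and (iii) proving Poisson generation of $Z_c$ by the power sums $\sum_i x_i^{km}$, $\sum_i y_i^{km}$ via Bezrukavnikov--Etingof completions and an induction over parabolic subgroups. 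Your open-orbit endgame is morally the same as the paper's Lemma~\ref{flows} (orbits are locally closed, and a proper invariant closed subset would give a proper Poisson ideal on a symplectic variety), so the divergence is not there; it is that the paper replaces the intractable exponentiation problem by a hard but tractable Poisson-generation problem, and this replacement is only known to work in the cyclic case.
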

The aim of this paper is to prove this conjecture in the following case.
For $m \ge 1$, let $\tilde A_{m-1}$ be an affine Dynkin quiver of type $A$.
According to the McKay correspondence, the quiver $\tilde A_{m-1}$
corresponds to a cyclic group of order $m$.
Let $Q_{m}$ be the quiver obtained from $\tilde A_{m-1}$ by adding 
a vertex $\infty$ and an arrow $a_{\infty}:\infty \to 0$.
We will consider quiver varieties $\mathfrak{M}^{\tau}(Q_{m}, \beta)$.
These are natural generalizations of the Calogero-Moser spaces $\mathcal{C}_n$.
Like $\mathcal{C}_n$, the space $\mathfrak{M}^{\tau}(Q_{m}, \beta)$
is a smooth symplectic variety. 
Our main result states:
%
\begin{theorem}
\label{maintheorem}
The group $G_{\omega}$ acts transitively on $\mathfrak{M}^{\tau}(Q_{m}, \beta)$.
\end{theorem}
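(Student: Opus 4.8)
The plan is to exploit the bijective, $G_\Gamma$-equivariant correspondence established in \cite{E} between the quiver variety $\mathfrak{M}^\tau(Q_m,\beta)$ and the space of isomorphism classes of $O_\tau(\Gamma)$-ideals, where $\Gamma$ is the cyclic group of order $m$ attached to $\tilde A_{m-1}$ via McKay. Transitivity of the $G_\omega$-action on the quiver variety is thereby reduced to the statement that $G_\Gamma$ acts transitively on ideal classes, i.e.\ that any two (fractional, rank-one) ideals of $O_\tau(\Gamma)$ are carried into one another by some $\Gamma$-equivariant automorphism. The model to imitate is the Berest--Wilson argument for the first Weyl algebra $A_1$ and the Calogero--Moser spaces $\mathcal{C}_n$, which corresponds to the trivial case $m=1$; the whole point is that the cyclic quotient singularities only mildly deform that picture.

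\medskip

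\noindent First I would fix a basepoint. There is a distinguished ideal class — the class of $O_\tau(\Gamma)$ itself, or of a standard rank-one module — whose image under the correspondence of \cite{E} is an explicit, highly symmetric point of $\mathfrak{M}^\tau(Q_m,\beta)$ (a fixed point of the natural torus/scaling action). It then suffices to connect an arbitrary ideal class to this basepoint by a finite sequence of automorphisms. The key technical device, exactly as in \cite{BW}, is an \emph{adelic} or \emph{local-to-global} description: a rank-one $O_\tau$-ideal is determined by finitely many local modifications of the standard module at points of $\c^2/\!\!/\Gamma$, and each elementary modification can be realized by applying a suitable element of $G_\Gamma$. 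Concretely, I would show that $G_\Gamma$ contains enough automorphisms — analogues of the ``triangular'' automorphisms $x\mapsto x,\ y\mapsto y+p(x)$ and their $\Gamma$-equivariant counterparts inside $O_\tau(\Gamma)$ — to generate, via their action on ideals, all the local modifications needed.

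\medskip

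\noindent The engine of the proof is then an induction on a numerical invariant of the ideal (its codimension, equivalently the relevant component of the dimension vector $\beta$, or the length of the quotient). At each step I would produce an element $g\in G_\Gamma$ strictly decreasing this invariant while staying within the fixed quiver variety $\mathfrak{M}^\tau(Q_m,\beta)$, so that after finitely many steps the ideal is carried to the basepoint. Making this induction work requires the structure theory of $O_\tau(\Gamma)$ as a $\Gamma$-equivariant deformation of $\c[x,y]$: one uses that $O_\tau(\Gamma)$ is a simple (or at least a sufficiently nice, hereditary after passing to the fixed-point/smash-product picture) Noetherian domain, so that ideals factor into manageable pieces and Morita theory applies. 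The most delicate point, as in \cite{BW}, is the base case and the explicit identification of the torus-fixed locus with the basepoint class, together with verifying that the required decreasing automorphisms genuinely lie in $G_\omega$ rather than in some larger symmetry group.

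\medskip

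\noindent I expect the main obstacle to be the passage from the single-vertex Weyl-algebra situation to the genuinely $\Gamma$-equivariant setting: in \cite{BW} the two basic triangular automorphisms of $A_1$ already generate everything, whereas here one must check that the cyclic symmetry is compatible with these modifications and that the generators of $G_\Gamma$ act transitively on \emph{each} strat of ideal classes indexed by the multi-component dimension vector $\beta=(\beta_0,\dots,\beta_{m-1};1)$. Equivalently, the hard part is controlling how an elementary $G_\Gamma$-modification redistributes length among the $m$ residual components of $\beta$, and ensuring that the induction can always be arranged to terminate at the chosen basepoint without leaving $\mathfrak{M}^\tau(Q_m,\beta)$. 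Once the equivariance bookkeeping is in place, the remainder follows the Berest--Wilson template, and transporting transitivity back across the correspondence of \cite{E} yields Theorem~\ref{maintheorem}.
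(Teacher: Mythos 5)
Your proposal has genuine gaps, and the most serious one is structural. Passing through the correspondence of \cite{E} to the space $\mathcal{R}^\tau_\Gamma$ of ideal classes of $O_\tau(\Gamma)$ is not a reduction: that correspondence is a $G$-equivariant \emph{bijection}, so transitivity on ideal classes is exactly equivalent to the statement to be proved, and it is not known independently --- indeed, in the paper's logic it is a \emph{consequence} of Theorem~\ref{maintheorem} (this is precisely what feeds the Morita classification deferred to \cite{CEEF}). You then propose to establish it by imitating Berest--Wilson, but the engine of their proof --- the identification of $\bigsqcup_n \mathcal{C}_n$ with the adelic Grassmannian and a non-trivial fact about the KP hierarchy --- has no available analogue for $O_\tau(\Gamma)$ when $m\ge 2$. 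Your ``local-to-global description of rank-one ideals by finitely many local modifications'' and the claim that ``each elementary modification can be realized by an element of $G_\Gamma$'' are not established facts one can cite; together they \emph{are} the theorem, and constructing the $\Gamma$-equivariant adelic picture would be a separate major project. The paper explicitly notes that its proof avoids this route for exactly this reason.

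There is also an internal inconsistency in your induction scheme: you propose to find $g\in G_\Gamma$ ``strictly decreasing'' a numerical invariant (codimension of the ideal, equivalently determined by $\beta$) ``while staying within the fixed quiver variety $\mathfrak{M}^\tau(Q_m,\beta)$.'' But the dimension vector is constant on a quiver variety and is preserved by the $G_\Gamma$-action, so every point of $\mathfrak{M}^\tau(Q_m,\beta)$ has the same invariant and no automorphism can decrease it; transitivity is about connecting points \emph{with equal} invariant, so no such induction can get off the ground. For contrast, the paper's actual argument is entirely different: (i) it proves that the Crawley-Boevey--Holland/Nakajima reflection functors $\mathcal{R}_s$ are $G_\omega$-equivariant (by checking the effect on the generating trace functions $H_k^{i,j}$ of Theorem~\ref{genfuncmain}); (ii) it uses a Weyl-group element to carry any admissible $\beta=(1,\alpha)$ to $(1,n,\dots,n)$, reducing to the generalized Calogero--Moser space $\mathcal{C}_n^{\tau}(Q_m)$; (iii) there, it identifies $\mathcal{O}(\mathcal{C}_n^\tau(Q_m))$ with the center $Z_c$ of the wreath-product Cherednik algebra (Etingof--Ginzburg), shows via Bezrukavnikov--Etingof completions that the power sums $\sum_i x_i^{km}$, $\sum_i y_i^{km}$ Poisson-generate $Z_c$, and concludes transitivity from the general fact (Lemma~\ref{flows}) that a group generated by Hamiltonian flows whose Hamiltonians Poisson-generate the coordinate ring of a smooth affine symplectic variety acts transitively. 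None of the infrastructure your proposal relies on appears there.
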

Theorem~\ref{maintheorem} 
implies Conjecture~\ref{LBB} for $Q=Q_{m}$, since $G_{\omega}(Q)$ 
is a subgroup of $\mathrm{Aut}_Q$ for any quiver $Q$.
Furthermore,  since $G_{\omega}=G$  for $m=1$, we obtain Berest-Wilson result 
as a special case of Theorem~\ref{maintheorem}.
However, our  proof  is different from that of Berest-Wilson, where they
identified $ \mathcal{C}_n$ with another space, the adelic Grassmannian
$\mathrm{Gr}^{ad}$ that paramertizes rational solutions of some integrable system 
(the KP hierarchy). They showed that this identification is $G$-equivariant, 
and used a non-trivial fact about this system to prove the transitivity. 

We now give an outline of our approach. Recall that to each quiver $Q=(E,I)$
one can associate the Weyl group $W$, which is a subgroup of $\mathrm{Aut}(\z^I)$
generated by reflections at loop-free vertices.
The group $W$ acts on the set of dimension vectors $\z^I_{\ge 0}$ and 
dually on the space of parameters $\c^I$. Then to each $s\in W$,
there is a bijective map $\mathcal{R}_{s}:\mathfrak{M}^{\tau}(Q,\gamma)
\to \mathfrak{M}^{s\cdot \tau}(Q,s(\gamma))$, called the {\it reflection functor}
(see \cite{N}, \cite{L}, \cite{CBH}). 
Later it has been shown that  $\mathcal{R}_{s}$
is an isomorphism of algebraic varieties (see \cite{M}).
We shall prove
%
\begin{theorem}
\label{thmGequiv}
The map $\mathcal{R}_{s}:\mathfrak{M}^{\tau}(Q_{m},\beta)
\to \mathfrak{M}^{s\cdot \tau}(Q_{m},s(\beta))$ is $G_{\omega}$-equivariant.
\end{theorem}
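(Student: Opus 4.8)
The plan is to reduce the assertion to a single simple reflection and then to establish the compatibility of that reflection with the $G_\omega$-action by a local analysis at the reflecting vertex.

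First I would reduce to simple reflections. The Weyl group $W$ is generated by the reflections $s_i$ at the loop-free vertices of $Q_{m}$, so any $s\in W$ admits a reduced expression $s=s_{i_1}\cdots s_{i_k}$. By \cite{M} the reflection functors realize the $W$-action and satisfy $\mathcal{R}_{s}=\mathcal{R}_{s_{i_1}}\circ\cdots\circ\mathcal{R}_{s_{i_k}}$ on the appropriate chambers of parameters; since the class of $G_\omega$-equivariant maps is closed under composition, it suffices to prove that each $\mathcal{R}_{s_i}$ is $G_\omega$-equivariant. Here one should note that $G_\omega$ is attached to the quiver alone, so it acts at once on the source $\mathfrak{M}^{\tau}(Q_{m},\beta)$ and on the target $\mathfrak{M}^{s_i\cdot\tau}(Q_{m},s_i(\beta))$, and the claim is exactly that $\mathcal{R}_{s_i}$ intertwines these two actions.

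Next I would make the two operations explicit. An element $g\in G_\omega$ is an $S$-algebra automorphism of $\c\bar{Q}_{m}$, hence fixes every vertex idempotent $e_v$ (so it preserves dimension vectors) and fixes $\omega$ (so it preserves the moment map $\mu=\sum_{a\in E}[x_a,x_{a^*}]$ and commutes with the gauge group $\prod_v\GL(V_v)$). Consequently $g$ descends to a well-defined automorphism of each $\mathfrak{M}^{\tau}(Q_{m},\beta)$, acting on representations by the substitution $x_a\mapsto x_{g(a)}$, where $x_{g(a)}$ is the composite map attached to the path-combination $g(a)\in e_{t(a)}\,\c\bar{Q}_{m}\,e_{s(a)}$. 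Equivalently, writing $\Pi^{\tau}=\c\bar{Q}_{m}/(\omega-\tau)$ for the deformed preprojective algebra, $g$ descends to an automorphism of $\Pi^{\tau}$ fixing the $e_v$, and the $G_\omega$-action is the twist of $\Pi^{\tau}$-modules by $g$.

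I would then invoke Maffei's explicit form of $\mathcal{R}_{s_i}$. At the vertex $i$ one assembles the incoming and outgoing maps $p_i\colon\bigoplus_{a\colon i\to j}V_j\to V_i$ and $q_i\colon V_i\to\bigoplus_{a\colon i\to j}V_j$ from the arrow-maps at $i$; these satisfy the moment relation $p_iq_i=\tau_i\,\mathrm{Id}_{V_i}$ (up to the usual signs), and when $\tau_i\neq 0$ the reflected space and reflected arrow-maps are produced from this data by canonical, basis-free operations on kernels, cokernels, and the maps they induce. Because these operations are functorial, the equivariance of $\mathcal{R}_{s_i}$ reduces to checking that the transformation of the local data $(p_i,q_i)$ and of the neighbouring arrows induced by $g$ is of the admissible form under which Maffei's formulas are natural. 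To make this manageable I would pass to a generating set of $G_\omega$---elementary automorphisms that fix all arrows but one, sending that arrow to itself plus a path through the commuting directions, together with reparametrization and torus elements---and verify the compatibility for each generator, splitting into cases according to whether the support of the generator meets the star of $i$.

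The hard part will be this last verification. The difficulty is that a $G_\omega$-generator typically sends an arrow incident to $i$ not to a multiple of an arrow but to an arrow plus a longer path, so its effect on $(p_i,q_i)$ passes through \emph{composites} of the maps $x_a$ rather than through a plain linear change of coordinates on $\bigoplus_j V_j$; such composites need not, a priori, commute with the kernel/cokernel operations defining the reflection. The mechanism that rescues the argument is the preservation of $\omega$: it forces the relation $p_iq_i=\tau_i\,\mathrm{Id}$ to be respected by the substitution, which is precisely what guarantees that the $g$-transformed data still satisfies the hypotheses of Maffei's construction and that the reflected representation is transformed by the same $g$. I expect the bulk of the bookkeeping to be concentrated at the vertex $0$ adjacent to the framing vertex $\infty$, where the extra arrows $a_{\infty}$ and $a_{\infty}^{*}$ enter the local computation.
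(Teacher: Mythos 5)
Your reduction to generators of $W$ and of the automorphism group is exactly how the paper begins, but the heart of your argument---the final ``verification''---is asserted rather than proved, and the asserted mechanism is not valid. Preservation of $\omega$ guarantees only that $g$ descends to an automorphism of $\Pi^{\tau}(Q_m)$ and hence acts on both $\mathfrak{M}^{\tau}(Q_m,\beta)$ and $\mathfrak{M}^{s_i\tau}(Q_m,s_i(\beta))$; it does not imply that this action intertwines with $\mathcal{R}_{s_i}$. That intertwining is precisely the content of the theorem: a generator sends an arrow at the reflecting vertex to that arrow plus a long composite path, so the transformed local data $(p_i,q_i)$ is not obtained from the original by any morphism of the local diagram at $i$, and the naturality of the kernel/cokernel construction cannot be invoked---as you yourself concede---yet nothing in your outline closes this loop. (There is also a generating-set issue: for $m\geq 2$ there is no analogue of the Makar-Limanov/Jung--van der Kulk description of generators of $G_{\omega}(Q_m)$, which is why the paper actually proves the statement for the subgroup $G_m$ generated by $\psi_{k,\mu}$, $\phi_{k,\nu}$, this being enough for the transitivity application.)

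A second, related gap: even for a single generator $\psi_{k,\mu}$ and the single reflection $\mathcal{R}_0$, the two compositions $\mathcal{R}_0\circ\psi_{k,\mu}$ and $\psi_{k,\mu}\circ\mathcal{R}_0$ need not agree on the nose at the level of representation tuples; they agree only up to the gauge group $G(\beta)$, i.e.\ as points of the GIT quotient. Any correct proof must therefore compare the two outputs gauge-invariantly, and this is the device your proposal lacks. The paper supplies it: first it shows (Theorem~\ref{genfuncmain}, proved in the Appendix) that the trace-type functions $H^{i,j}_k=w(A^iC_kB^j)v$ generate $\mathcal{O}(\mathfrak{M}^{\tau}(Q_m,\beta))$, so that points are separated by these functions; then it computes how one reflection transforms the generators (Lemma~\ref{lemmaH}: $H^{i,j}_k$ is unchanged for $k\neq l$, while $H^{i,j}_l\mapsto H^{i,j}_l-\lambda_l H^{i-1,j-1}_{l-1}$); finally it verifies by direct computation, using identities such as $Y_0'X_0'=-\lambda_0\,\mathrm{Id}+Y_0X_0$ and $X_{m-1}'Y_{m-1}'=X_{m-1}Y_{m-1}+\lambda_0\,\mathrm{Id}$, that both compositions yield the same values of every $H^{i,j}_k$. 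Your outline becomes a proof only after supplying both such a separation-of-points argument and the explicit computation it enables; the appeal to preservation of $\omega$ is no substitute for either.
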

Recall that the \textit{generalized $n$-Calogero-Moser space}  $\mathcal{C}_{n,m}^{\tau}$ 
associated with $Q_{m}$ is the quiver variety $\mathfrak{M}^{\tau}(Q_{m}, \gamma)$ 
with the dimension vector $\gamma:=(1,n,...,n)$.
We show that for any $\beta$ there exists $s\in W$
so that $\mathcal{R}_{s}:\mathfrak{M}^{\tau}(Q_m,\beta)
\to \mathcal{C}_{n,m}^{\tau'}$ 
is an equivariant bijection for some $n$ and  $\tau'$.
Thus, using Theorem~\ref{thmGequiv}, we may reduce 
the transitivity on $\mathfrak{M}^{\tau}(Q_{m,\infty}, \beta)$
to that on the corresponding generalized Calogero-Moser space. It is worth pointing out 
that the idea of using the
reflection functors, in order to reduce the problem from general quiver variety
to a special one, is not new. For instance, in series of works 
\cite{Go, P}, the authors used $\c^*$-equivariance of
the reflection functors to obtain combinatorial description of $\c^*$-fixed points of quiver varieties. However, one 
novelty of our result is that, we prove equivariance of these bijection under a  ``larger" 
automorphism group. It seems to us 
this result might be of independent interest. 

 Let now $\Gamma$ be a cyclic group of order $m$, and let
 $\mathbf{\Gamma}_n=S_n \ltimes \Gamma^n$ be the 
 wreath product of $S_n$ and $\Gamma$. Then we consider  the associated symplectic
 reflection algebra $H_{0,c}(\mathbf{\Gamma}_n)$
 introduced by Etingof and Ginzburg \cite{EG}.  
They proved that $\O (\mathcal{C}_{n,m}^{\tau})$, the algebra
 of regular functions on $\mathcal{C}_{n,m}^{\tau}$, can be naturally identified with
$Z_c$,  the center of the algebra $H_{0,c}$.
 They showed this identification is an isomorphism of Poisson algebras.
Now the group $G_\omega$ has a subgroup generated by a family of
 Hamiltonian flows $\{ \, \Psi_{k, \mu}, \Phi_{k, \mu} \}_{k \in \z_{\geq 0}, \, \mu \in \c}$.
 By using a theorem of Bezrukavnikov and Etingof on certain completions of rational Cherednik algebras \cite{BE}, 
 we show that the corresponding 
 Hamiltonians are given by the above Poisson generators of $\O (\mathcal{C}_{n,m}^{\tau})$.
Since $\mathcal{C}_{n,m}^{\tau}$ is a smooth irreducible symplectic variety then
 the following is an easy consequence  of a well-known result on Poisson algebras. 

\begin{theorem}
\label{CMtran}
$G_\omega$ acts transitively on $\mathcal{C}_{n,m}^{\tau}$.
\end{theorem}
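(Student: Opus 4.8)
The plan is to exploit the symplectic geometry of $\mathcal{C}_{n,m}^{\tau}$ together with the explicit Poisson description of its coordinate ring. Recall that by the theorem of Etingof--Ginzburg there is an isomorphism of Poisson algebras $\O(\mathcal{C}_{n,m}^{\tau}) \cong Z_c$, where $Z_c$ is the centre of $H_{0,c}(\mathbf{\Gamma}_n)$, and that $\mathcal{C}_{n,m}^{\tau}$ is a smooth irreducible affine symplectic variety. Under this identification each of the families $\Psi_{k,\mu}$ and $\Phi_{k,\mu}$ of $G_\omega$ becomes a one-parameter flow of symplectomorphisms of $\mathcal{C}_{n,m}^{\tau}$, and the first step is to show that each such flow is Hamiltonian, i.e.\ of the form $\exp(t\,X_{h})$ for a regular function $h \in \O(\mathcal{C}_{n,m}^{\tau})$, where $X_h$ denotes the Hamiltonian vector field of $h$ with respect to the symplectic form.

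The second, and technically most demanding, step is to identify these Hamiltonians. Here I would invoke the theorem of Bezrukavnikov--Etingof on completions of the rational Cherednik algebra to compute the action of $\Psi_{k,\mu}$ and $\Phi_{k,\mu}$ on $Z_c$, and thereby to exhibit the resulting Hamiltonians $\{h_{k,\mu}\}$ and $\{\tilde h_{k,\mu}\}$ as two ``dual'' families that Poisson-generate $\O(\mathcal{C}_{n,m}^{\tau})$. Concretely, for $m=1$ these should be the power-trace functions $\Tr(X^{k})$ and $\Tr(Y^{k})$ on the Calogero--Moser space, and the general cyclic case produces the analogous $\Gamma$-graded versions indexed by $\mu$. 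The crucial point to extract is that the iterated Poisson brackets of the $h_{k,\mu}$ with the $\tilde h_{l,\nu}$ already span $\O(\mathcal{C}_{n,m}^{\tau})$ modulo the constants.

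Granting this, the transitivity is the promised easy consequence of Poisson geometry. The assignment $h \mapsto X_h$ is a Lie algebra homomorphism, $[X_h, X_{h'}] = X_{\{h, h'\}}$, whose kernel is the locally constant functions; since $\mathcal{C}_{n,m}^{\tau}$ is irreducible this kernel is just $\c$. Let $\mathfrak{g}$ be the Lie algebra of vector fields generated by the $X_{h_{k,\mu}}$ and $X_{\tilde h_{k,\mu}}$. Being closed under Lie bracket, $\mathfrak{g}$ contains $X_h$ for every $h$ arising as an iterated Poisson bracket of the generators; by the previous step these $h$ span $\O(\mathcal{C}_{n,m}^{\tau})$ modulo $\c$, whence $\mathfrak{g} = \{X_h : h \in \O(\mathcal{C}_{n,m}^{\tau})\}$. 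Because the differentials $dh(p)$, $h \in \O(\mathcal{C}_{n,m}^{\tau})$, span the cotangent space $T_p^{*}\mathcal{C}_{n,m}^{\tau}$ at every (smooth) point $p$, and the symplectic form identifies $T_p^{*}$ with $T_p$, the vector fields in $\mathfrak{g}$ span the whole tangent space $T_p\mathcal{C}_{n,m}^{\tau}$ at each $p$. By the orbit theorem the orbit of the subgroup of $G_\omega$ generated by the flows $\exp(tX_{h_{k,\mu}})$ and $\exp(tX_{\tilde h_{k,\mu}})$ is then open. Since on an irreducible variety any two nonempty open subsets intersect, distinct orbits cannot both be open, so there is a single orbit and the action is transitive.

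The main obstacle is the middle step: translating the abstractly defined automorphisms $\Psi_{k,\mu}, \Phi_{k,\mu} \in G_\omega$ into explicit Hamiltonians on $Z_c$ and verifying that their brackets Poisson-generate. This is where the Bezrukavnikov--Etingof comparison of completions carries the real content, since it is what allows one to pass between the noncommutative automorphisms and their semiclassical counterparts; once the Hamiltonians are matched with the power-trace-type Poisson generators of $Z_c$, the spanning of tangent spaces, and hence the transitivity, are formal.
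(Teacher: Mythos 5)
Your outline follows the paper's own strategy in its first two steps: transport the problem to $Z_c$ via the (equivariant) Etingof--Ginzburg isomorphism, realize the one-parameter subgroups $\Psi_{k,\mu},\Phi_{k,\mu}$ as Hamiltonian flows, and use the Bezrukavnikov--Etingof completion theorem to control the Hamiltonians $\sum_i x_i^{km}$, $\sum_i y_i^{km}$. The gap is in your final, ``formal'' step. You need the claim that the \emph{iterated Poisson brackets} of these two families linearly span $\mathcal{O}(\mathcal{C}_{n,m}^{\tau})$ modulo constants, i.e.\ that the Hamiltonians generate the coordinate ring as a \emph{Lie} algebra; only then does your Lie algebra $\mathfrak{g}$ equal $\{X_h : h\in\mathcal{O}\}$, giving spanning of tangent spaces and open orbits. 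But what the Bezrukavnikov--Etingof induction actually delivers (and what the paper proves in Theorem \ref{main} and Corollary \ref{Pois-gen}) is the weaker statement that these functions generate $Z_c$ as a \emph{Poisson} algebra, i.e.\ under brackets \emph{and} products. The two notions are genuinely different here: Poisson brackets of trace-type functions stay inside the span of ``necklace''-type functions, and nothing you cite shows that this span exhausts $\mathcal{O}(\mathcal{C}_{n,m}^{\tau})$ modulo constants (products such as $\Tr(X^{m})\Tr(Y^{m})$ are exactly what is at issue). So the key identification $\mathfrak{g}=\{X_h\}$ is unproved, and with it the openness of orbits.

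The repair is the paper's Lemma \ref{flows}, which is designed precisely to get by with Poisson generation. One first quotes \cite{AFKKZ} to know that orbits of a group generated by algebraic one-parameter flows are locally closed; it then suffices to rule out a proper closed $G_m$-invariant subset $Y$. If $I$ is the reduced ideal of $Y$, invariance under the flows gives $\{f_i, I\}\subset I$ for each Hamiltonian $f_i$. Now the set $\{h\in\mathcal{O} : \{h,I\}\subset I\}$ is closed under multiplication (because $I$ is an ideal) as well as under brackets (Jacobi identity), hence is a Poisson subalgebra; therefore Poisson generation by the $f_i$ already forces $I$ to be a Poisson ideal, which is impossible on a smooth irreducible affine symplectic variety. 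This is exactly where products are absorbed, something your vector-field formulation cannot do; without either this ideal-theoretic argument or a proof of the stronger Lie-generation claim, the transitivity does not follow.
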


The paper is organized as follows. In Section~\ref{sec2},
besides introducing standard notations, we review some basic facts about 
quivers and quiver varieties.
In Section~\ref{sec3}, we introduce a group $G_m$ and 
define its action on $\mathfrak{M}^{\tau}(Q_{m}, \beta)$. 
It suffices for the proof of Theorem~\ref{maintheorem} to prove 
Theorem~\ref{thmGequiv} and Theorem~\ref{CMtran} for 
the subgroup $G_m$.
In Section~\ref{sec4} we give a brief exposition of reflection functors and present 
the proof of Theorem~\ref{thmGequiv}. 
In Section~\ref{sec5}, we establish transitivity result for generalized Calogero-Moser varieties (Theorem~\ref{CMtran}). 
For the convenience of the reader we recall some facts about Cherednik algebras and its
relationship with quiver varieties.
The paper ends with an appendix where we give
a description for the generating set of 
the ring $\mathcal{O}(\mathfrak{M}^{\tau}(Q_{m}, \beta))$, which could be 
 of independent interest. 

We would like to finish the introduction by mentioning one important application 
of Theorem~ \ref{maintheorem}.  Let  $\Gamma \subset \SL_2(\c)$
be a finite subgroup. Then it acts naturally on $\c \langle x, y \rangle $,
and we can form the crossed product  $R:=\c \langle x, y \rangle \ast \Gamma$.
For each $\tau \in Z(\c\Gamma)$, the center of the group  algebra $\c\Gamma$, let
$\mathcal{S}_\tau :=R / (xy-yx -\tau)$ and  
$O_\tau = e  \mathcal{S}_\tau e$ 
where $e$ is the symmetrizing idempotent $\sum_{g\in\Gamma} g/|\Gamma|$ in 
$\c\Gamma \subset \mathcal{S}_{\tau}$.  These algebras were introduced 
by Crawley-Boevey and Holland in \cite{CBH}.
The algebras $O_{\tau}$ can be viewed as a {\it quantization} of the coordinate ring of
the classical Kleinian singularity $\mathbb C^2/\!\!/\Gamma$.
Let $\mathcal{R}^\tau_{\Gamma}$ be the set of isomorphism classes of right ideals of $O_\tau$.
Then $\mathcal{R}^\tau_{\Gamma}$ can be naturally identified with the set of isomorphism classes of
finitely generated rank one projective $O_\tau$-modules. 
In \cite{E}, for $\Gamma \cong \z_m$, one of the authors constructed a bijective
correspondence between $\mathfrak{M}^\tau(Q_m)$ and $\mathcal{R}^\tau_{\Gamma}$,
and showed it is equivariant with respect to the action of $G_m\cong \Aut(O_\tau)$.
This generalizes an earlier result of Berest-Wilson on classification of $A_1$-ideals
\cite{BW2}. Using the bijective correspondence together with $G$-transitivity 
on $\mathcal{C}_n$, they gave complete classification of algebras Morita equivalent
to $A_1$.  We will address this question for $O_\tau$ in our subsequent paper
\cite{CEEF}.

\begin{ack}
X. Chen and F. Eshmatov are partially supported by NSFC No. 11671281.
A. Eshmatov is partially supported by University of Toledo Research Council Fellowship Award (No. 206184).
A. Tikaradze is very grateful to Gwyn Bellamy for helpful e-mail correspondence.
\end{ack}


%
%
\section{ Quiver varieties and Calogero-Moser spaces}
\label{sec2}
\subsection{Preliminaries on quivers}
Let $Q=(I,E)$ be a quiver, where $I$ is the set
of vertices and $E$ be the set of edges. For $a \in E$
such that $a : i \to j$, define $h(a):=j$ and $t(a):=i$.
The \textit{Ringel form} of $Q$ is the bilinear form
on $\mathbb{Z}^I$ is defined by
$$ \langle \alpha, \beta \rangle := \sum_{i \in I} \alpha_i \, \beta_i
-\sum_{a \in E} \alpha_{t(a)}\, \beta_{h(a)} \, . $$
The corresponding symmetric form is 
$(\alpha, \beta):= \langle \alpha, \beta \rangle + 
\langle \beta, \alpha  \rangle$.
For a loop-free quiver (no edge $a: i \to i$), the \textit{simple reflection} 
$s_i: \mathbb{Z}^I \to \mathbb{Z}^I$ is given by
\begin{equation}
\label{wdim}
s_i(\beta)\, = \, \beta-(\beta,\epsilon_i) \, \epsilon_i\, ,
\end{equation}
where $ \epsilon_i$ is the $i$th coordinate vector.
The \textit{dual reflection} $r_i: \mathbb{C}^I \to \mathbb{C}^I$
is 
\begin{equation}
\label{wpar}
(r_i \tau)_j= \tau_j- (\epsilon_i, \epsilon_j) \tau_i \, ,
\end{equation}
and it satisfies $r_i \tau \cdot \beta= \tau \cdot s_i(\beta)$.
Then the \textit{Weyl group} of $Q$ is the subgroup of automorphisms of $\z^I$ 
generated by $s_i$'s.  

The \textit{simple roots} of $Q$ are the coordinate vectors of loop-free vertices,
and a  \textit{real root} is the image of a simple root under the Weyl group.
A real root $\beta \in \z^{I}$ is positive (respectively negative) if  
$\beta \in \mathbb{N}^I$ (respectively $-\beta \in \mathbb{N}^I$). 
For an element $\beta= \sum_{i} k_i \epsilon_i \in \z^{I}$
the \textit{support}, denoted by $ {\rm supp}(\beta)$, is the subgraph of $Q$ consisting of vertices $i$
for which $k_i \neq 0$ and all the edges joining these vertices.
  The \textit{fundamental region} is 
$$ F:=\{ \beta \in \mathbb{N}^I-\underline{0}\, |\, (\beta, \epsilon_i) \le 0,\, 
\rm{supp}(\beta) \mbox{ is connected} \} \, .$$
An element $\alpha \in \z^I$ is an \textit{imaginary root} if 
it is of the form $w\beta$ or $-w\beta$ for some $w \in W$ and
$\beta \in F$. Any \textit{root} is either real or imaginary.
We say $\lambda \in \c^I$ is \textit{generic}
if $\lambda \cdot \alpha \neq 0$ for all roots $\alpha$ of $Q$.
%
%

%
\subsection{Quiver varieties}
%
Let $Q=(I,E)$ be a quiver. 
The double $\bar{Q}$ of $Q$ is the quiver obtained by adjoining an arrow 
$a^* \, : \, j \rightarrow i$ for each arrow $a\, : \, i \rightarrow \, j$ in $Q$.
We denote by $\c \bar{Q}$ its path algebra: the associative algebra with basis 
 paths in $\bar{Q}$, including trivial paths $e_i$.
Following \cite{CBH}, for $ \tau \in \c^I$, the 
\textit{deformed preprojective algebra}  is defined to be
$$
\Pi^{\tau} (Q) = \c \bar{Q} \, \bigg/ \, \bigg(\sum_{a \in E} [a,a^*] - \sum_{i \in I} \tau_i e_i\bigg) \, .
$$
The relation $\sum_{a \in E} [a,a^*] - \sum_{i \in I} \tau_i e_i$ can 
be replaced by an equivalent set of relations:
$$
\sum_{\substack{a \in E \\ h(a)=i}} aa^* - \sum_{\substack{a \in E 
\\ t(a)=i}} a^* a -  \tau_i e_i, \quad \quad i \in I \, .
$$
%
%
%
%
%
%
%
%
The space of all representations of $Q$ of dimension vector $\beta \in \mathbb{N}^I$ 
is given by 
$$
{\rm Rep} (Q, \beta) = \bigoplus_{a \in E} {\rm Mat } (\beta_{h(a)}
 \times \beta_{t(a)}, \c ) \, ,$$
%
where the isomorphism classes correspond to orbits of group
$$
G(\beta) = \Big( \prod_{i \in I} {\rm GL} (\beta_i , \c) \Big) \slash \c^{\times} 
$$
acting by conjugation. Using the trace pairing there is an identification of the
cotangent bundle
$$
{\rm T^*  Rep} (Q, \beta) \, \cong \, {\rm Rep} ( \bar{Q}, \beta) \, 
$$
Therefore the  space  ${\rm Rep} ( \bar{Q}, \beta)$ has a natural symplectic structure
$$
\omega = \sum_{a \in E} {\rm tr} ( d X_a \wedge d X_{a^*} ) ,
$$ 
invariant under $G(\beta)$.  We  define the moment map 
$\mu_{\beta} \, : \, {\rm Rep} (\bar{Q}, \beta) \, \rightarrow \, {\rm End} (\beta)_0$ by
$$
\mu_{\beta} (X)_i = \sum_{\substack{a \in E \\ h(a)=i}} X_a X_{a^*} -
 \sum_{\substack{a \in E \\ t(a)=i}} X_{a^*} X_a \, , 
$$
where
$$
{\rm End} (\beta)_0  = \Big\{ (\theta_i) \, \big | \, \sum_{i \in I}
 {\rm tr} (\theta_i) =0 \Big\} \subseteq {\rm End} (\beta)  = \oplus_{i \in I} {\rm Mat } (\beta_i , \c).
$$
If one uses the trace pairing to identify ${\rm End} (\beta)_0 $ with the dual of the 
Lie algebra of $G(\beta)$, then this is a moment map in the usual sense.
 Now if $\tau \cdot \beta :=\sum_{i\in I} \tau_i \beta_i=0 $ then 
$ \mu_{\beta}^{-1} (\tau)$ can be identified with ${\rm Rep} (\Pi^{\lambda}, \alpha)$,
the space of (left) $\Pi^{\tau}(Q)$-modules of dimension vector $\beta$.
Explicitly, it is the subvariety of ${\rm Rep} (\bar{Q}, \beta)$ defined by
\begin{equation*}
\sum_{\substack{a \in E \\ h(a)=i}} X_a X_{a^*} - 
\sum_{\substack{a \in E \\ t(a)=i}} X_{a^*} X_a \,  = \, \tau_i I_{\beta_i} \quad \quad i \in I.
\end{equation*}
For $\tau \in \c^{I}$ and $\beta \in  \mathbb{N}^I$, the \textit{quiver variety}
associated to $Q$ is the GIT quotient  
\begin{equation*}
\mathfrak{M}^{\tau}(Q, \beta):={\rm Rep} (\Pi^{\tau}, \beta)  \sslash G(\beta) = 
\mu^{-1}_{\beta} (\tau)\sslash G(\beta) \, .
\end{equation*}
%
Thus, the space $\mathfrak{M}^{\tau}(Q, \beta)$ classifies isomorphism classes of 
semisimple representations of the algebra $\Pi^{\tau}(Q)$ of dimension $\beta$, and the orbits 
on which  $G(\beta)$ acts freely correspond to simple representations.

Now we recall a description of possible dimension vectors of simple representations. 
Let $p(\beta):=1-\langle \beta, \beta \rangle$ be a function on $\z^I$. 
For $\tau \in \c^I$, let $\Sigma_\tau$ be the set of positive roots $\beta$
such that $\tau \cdot \beta=0$, and $p(\beta) > \sum_{t=1}^r p(\beta^{(t)})$
for any decomposition $\beta=\beta^{(1)}+...+\beta^{(r)}$ with $r \ge 2$,
$\beta^{(t)}$ a positive root with $\tau \cdot \beta^{(t)}=0$ for all $t$.
The following is  the main result of  \cite{CB}:
\begin{theorem}
\label{CBdim}
 $\Pi^{\tau}(Q)$ has a simple 
$\beta$-dimensional representation  if and only if $\beta \in \Sigma_\tau$. In this case, 
$\mathfrak{M}^{\tau}(Q,\beta)$ is a reduced and irreducible
scheme of dimension $2p(\beta)$.
\end{theorem}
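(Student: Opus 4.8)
The plan is to follow Crawley-Boevey's strategy, working throughout on the fibre $\mu_{\beta}^{-1}(\tau)=\Rep(\Pi^{\tau},\beta)$ of the moment map and on its GIT quotient by $G(\beta)$. Everything rests on one homological identity for the deformed preprojective algebra: for finite-dimensional $\Pi^{\tau}(Q)$-modules $M,N$ of dimension vectors $\alpha,\beta$ one has
\[
\dim\Ext^{1}_{\Pi^{\tau}}(M,N)=\dim\Hom_{\Pi^{\tau}}(M,N)+\dim\Hom_{\Pi^{\tau}}(N,M)-(\alpha,\beta).
\]
First I would prove this carefully. It follows from the canonical length-two resolution of $\Pi^{\tau}$ as a bimodule over $S=\c^{I}$, built from the vertices, the arrows of $\bar{Q}$, and the preprojective relation: applying $\Hom(M,-)$ and taking Euler characteristics gives the alternating sum $\dim\Hom-\dim\Ext^{1}+\dim\Ext^{2}=(\alpha,\beta)$, while the self-duality of the resolution identifies $\Ext^{2}(M,N)$ with $\Hom(N,M)^{*}$. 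Every dimension count below is a specialization of this identity.

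From it I extract the two numerical facts that drive the theorem. Taking $M=N$ simple forces $\Hom(M,M)=\c$, hence
\[
\dim\Ext^{1}_{\Pi^{\tau}}(M,M)=2-(\beta,\beta)=2\bigl(1-\langle\beta,\beta\rangle\bigr)=2p(\beta),
\]
using $(\beta,\beta)=2\langle\beta,\beta\rangle$. Since $\Ext^{1}(M,M)$ is the tangent space to the moduli at $[M]$ and $\Hom(M,M)=\c$ forces the $G(\beta)$-orbit of a simple module to be closed and free modulo scalars, the locus of simple representations is already seen to be smooth of dimension $2p(\beta)$; equivalently $\mu_{\beta}^{-1}(\tau)$ attains its expected dimension $\dim\Rep(\bar{Q},\beta)-\dim G(\beta)$ along that locus.

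For the existence equivalence I would use the reflection functors $\mathcal{R}_{s_{i}}$ at loop-free vertices, which match $\Pi^{\tau}$-modules with $\Pi^{r_{i}\tau}$-modules, preserve simplicity, and send the dimension vector $\beta$ to $s_{i}(\beta)$, the sole exception being the vertex-simple module $S_{i}$. As the defining inequalities of $\Sigma_{\tau}$ are invariant under $W$, this reduces the existence question to two base cases: $\beta$ a coordinate vector $\epsilon_{i}$ (the real-root case, where $S_{i}$ is the unique simple and exists exactly when $\tau_{i}=0$, consistent with $p(\epsilon_{i})=0$), and $\beta$ in the fundamental region $F$ (the imaginary-root case). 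The ``only if'' direction is the easy one: if $\beta\notin\Sigma_{\tau}$ there is a decomposition $\beta=\sum_{t}\beta^{(t)}$ with $p(\beta)\le\sum_{t}p(\beta^{(t)})$, and a direct dimension count shows the generic point of $\mu_{\beta}^{-1}(\tau)$ is then a non-trivial direct sum, so no simple module of dimension $\beta$ exists.

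The genuinely hard part, and the main obstacle, is the construction of simple representations for $\beta\in F$ together with the irreducibility of $\mu_{\beta}^{-1}(\tau)$. Here I would stratify $\mu_{\beta}^{-1}(\tau)$ by representation type, that is, by the multiset of dimension vectors occurring in the semisimplification, and bound the dimension of each stratum using the $\Ext^{1}$-identity above together with an induction on $\beta$ (the smaller dimension vectors being handled by the inductive hypothesis after reflection). The inequalities cutting out $\Sigma_{\tau}$ are exactly what forces the stratum of simple modules, type $(1;\beta)$, to be strictly larger than every other stratum; this simultaneously produces simple modules when $\beta\in\Sigma_{\tau}$, shows the simple locus is open and dense, and proves that $\mu_{\beta}^{-1}(\tau)$ is irreducible and reduced (a complete intersection of the expected dimension, generically smooth hence reduced), of dimension $\dim\Rep(\bar{Q},\beta)-\dim G(\beta)$. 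Passing to the GIT quotient preserves irreducibility and reducedness, and the dimension becomes $\dim\mu_{\beta}^{-1}(\tau)-\dim G(\beta)=2p(\beta)$. The delicate points are precisely the inductive dimension estimates for the non-simple strata and the verification that the reflection functors act as claimed on $\Sigma_{\tau}$ and on simplicity.
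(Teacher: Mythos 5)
The paper contains no proof of this statement: Theorem \ref{CBdim} is imported verbatim as the main result of Crawley-Boevey \cite{CB} and is used downstream as a black box (e.g.\ in Lemma \ref{lemmamain} and Proposition \ref{propbij}). So your attempt must be judged against Crawley-Boevey's original argument, and at the level of architecture your outline is faithful to it: the identity $\dim\Ext^1_{\Pi^{\tau}}(M,N)=\dim\Hom(M,N)+\dim\Hom(N,M)-(\alpha,\beta)$, the stratification of $\mu_{\beta}^{-1}(\tau)$ by representation type, the case split into real roots versus the fundamental region, and the final passage to the GIT quotient are exactly the ingredients of \cite{CB}.

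As a proof, however, the attempt has genuine gaps, concentrated precisely where the content of \cite{CB} lies. First, the existence of simple representations for $\beta$ in the fundamental region and the inductive dimension bounds on the non-simple strata (which give density of the simple locus, irreducibility, and reducedness) are deferred in a single sentence; these occupy the bulk of Crawley-Boevey's paper and cannot be treated as routine. Second, your derivation of the key Ext-formula is unsound as stated: the canonical length-two bimodule complex of $\Pi^{\tau}$ is in general \emph{not} a resolution ($\Pi^{\tau}$ need not have global dimension $2$), so neither the alternating-sum identity $\dim\Hom-\dim\Ext^1+\dim\Ext^2=(\alpha,\beta)$ nor the duality $\Ext^2(M,N)\cong\Hom(N,M)^*$ can be read off from ``self-duality.'' For the quiver with one loop-free vertex and no arrows, $\tau=0$, and $M=N$ the simple module, one has $\Ext^2=0$ while $\Hom(N,M)^*\neq 0$, and the Euler characteristic is $1$ whereas $(\beta,\beta)=2$; the final formula survives, but only because Crawley-Boevey proves it directly from the trace pairing, not via a resolution. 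Third, reflection functors exist only at a loop-free vertex $i$ with $\tau_i\neq 0$, so one cannot apply $W$ freely: the invariance of $\Sigma_\tau$ under admissible reflections and the claim that every $\beta\in\Sigma_\tau$ can be moved to a coordinate vector or into the fundamental region is itself a nontrivial combinatorial step. Fourth, your ``only if'' argument is circular in its current form: since simplicity is an open condition, ruling out simple modules requires showing that the generic point of \emph{every} irreducible component of $\mu_{\beta}^{-1}(\tau)$ is decomposable, which is again the stratum-dimension estimate you postponed.
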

For $\beta \in \Sigma_\tau$ there is a natural generating 
set of $\mathcal{O}(\mathfrak{M}^{\tau}(Q,\beta))$, the algebra of regular functions on 
$\mathfrak{M}^{\tau}(Q,\beta)$.
Let $(X_a)_{a\in \bar{Q}} \in \mathrm{Rep}(\bar{Q},\beta)$, and let 
$a_1\,...\,a_m $ be a path in $\c\bar{Q}$ that starts and ends 
at the same vertex. Then the function $\Tr(a_1\,...\,a_m )$
is invariant under the action $\GL(\beta)$.
By the classical theorem of Le Bruyn-Procesi,
such traces generate the algebra $\c[\mathrm{Rep}(\bar{Q},\beta)]^{\GL(\beta)}$. 
Moreover since $\GL(\beta)$ is reductive,
the algebra $\mathcal{O}(\mathfrak{M}^{\tau}(Q, \beta))$
is also generated by the same trace functions.

%
\subsection{Cyclic quiver case}
Let $\tilde A_{m-1}$ be the extended Dynkin quiver of type $A$.
It is  a quiver with $m$ vertices $\Z / m \Z = \{ 0,1,...,m-1 \}$ and $m$ arrows $a_i : i+1 \rightarrow i $.
Explicitly, $\tilde A_{m-1}$ is
$$
\begin{tikzcd}[ampersand replacement=\&] 
\& \& \& \& 0 \arrow{dddrrrr}[above]{a_{m-1}}\& \& \& \&  \\ 
\\
 \\
1  \arrow{uuurrrr}[above]{a_0}  \&  \&    \arrow{ll}[below]{a_1} 2 \& \& \ldots \& \& {}   
\& \& m-1 \arrow{ll}[below]{a_{m-2}} \\
\end{tikzcd}
$$
Let $W_{m-1}$ be the Weyl group of $\tilde{A}_{m-1}$
with presentation
$$
W_{m-1}= \big<s_0,...,s_{m-1}\, | \, s_i^2=1\, , \quad s_i \,s_{i+1}\,s_i=s_{i+1}\,s_i\,s_{i+1}
\, \mbox { for  }  \,  i \in \z/m\z \big> \, .
$$
The group $W_{m-1}$ acts on $\z^m$  by
$$ s_i(n_0, ...,n_{m-1})=(n_0,..., n_{i-1}+n_{i+1}-n_i,...,n_{m-1})
\, \mbox{  for  }\, i \in \z/m\z \ . $$
 The simple roots of $\tilde{A}_{m-1}$ are $\{ \epsilon_i \, | \,  i \in \z / m\z \}$, with
the fundamental region $F=\mathbb{N} \delta$, where 
$\delta=(1,...,1)$. 
%
%
%

Let $Q_m=(I_\infty, E_{\infty})$ be the quiver obtained 
from $\tilde{A}_{m-1}$ by adding an extra vertex, denoted by $\infty$, and an arrow
$a_{\infty}: \infty \to 0$. 
Namely, $Q_m$ is
$$
 \begin{tikzcd}[ampersand replacement=\&] 
\& \& \& \& \infty \arrow{dd}[left]{a_{\infty}}  \& \& \& \&  \\
\\
\& \& \& \& 0  \arrow{dddrrrr}[above]{a_{m-1}}\& \& \& \&  \\ 
\\
 \\
1  \arrow {uuurrrr}[above]{a_0}  \&  \&    \arrow{ll}[below]{a_1} 2 \& \& \ldots \& \& {}  
 \& \& m-1 \arrow{ll}[below]{a_{m-2}} \\
\end{tikzcd}  \\
$$
Hence we have $I_{\infty}=\{0,1,...,m-1, \infty\} $ and $E_{\infty}=\{a_0,...,a_{m-1}, a_{\infty}$\} .

\begin{remark}
Note that the quiver obtained by extending  $\tilde{A}_{m-1}$ from any other than $0$ vertex 
would be equivalent to $Q_m$. In particular, various constructions for  $Q_m$ such
as deformed preprojective algebras and quiver varieties associated to $Q_m$ are 
independent of the choice of such extension (up to isomorphism).
\end{remark}

%
Let $W_{\infty}$ be the subgroup of the Weyl group
of $Q_m$ fixing vertex $\infty$. It is clear that $W_{\infty}$ is isomorphic
to $W_{m-1}$ and it acts on $\z^{m+1}$ as follows
\begin{eqnarray}
\label{wact1}
&& s_0 (n_{\infty},n_0,...,n_{m-1})=(n_{\infty},\,n_{\infty}+n_1+n_{m-1}-n_0,\, n_1,...,n_{m-1}) , \\
\label{wact2}
&& s_i (n_{\infty},n_0,...,n_{m-1})=(n_{\infty},\,n_0,..., n_{i-1}+n_{i+1}-n_i,...,\,n_{m-1})  ,
\end{eqnarray}
for $1\le i \le m-1$.
%
%

 For $\alpha \in \z^m$, $\lambda =(\lambda_i) \in \c^m$ let 
$\lambda_{\infty}:=-\lambda \cdot \alpha$. Then we consider
the deformed preprojective algebra $\Pi^{\tau}(Q_m)$ and 
 the quiver variety $\mathfrak{M}^{\tau}(Q_m,\beta)$, where 
$\tau:=(\lambda_{\infty},\lambda) \in \c^{m+1}$
and $\beta=(1,\alpha) \in \z^{m+1}$ (note $\beta \cdot \tau=0$). 
A point of $\mathfrak{M}^{\tau}(Q_m,\beta)$
is given by the following representation
of the double quiver $\bar{Q}_{\infty}$:

 \begin{tikzcd}[ampersand replacement=\&] 
\& \& \& \& \c  \arrow[bend left=10,swap]{dd}[right]{v} \& \& \& \&  \\
\\
\& \& \& \& \c^{\alpha_0} \arrow[bend left=10,swap]{uu}[left]{w} \arrow[bend left=10,swap]{dddllll}[right]{Y_0}  \arrow[bend left=10,swap]{dddrrrr}[above]{X_{m-1}}\& \& \& \&  \\ 
\\
 \\
\c^{\alpha_1}  \arrow[bend left=10,swap]{uuurrrr}[above]{X_0}   \arrow[bend left=10,swap]{rr}[above]{Y_1} \&  \&    \arrow[bend left=10,swap]{ll}[below]{X_1} \c^{\alpha_2} \& \& \ldots \& \& {}  \arrow[bend left=10,swap]{rr}[above]{Y_{m-2}} \& \& \c^{\alpha_{m-1}} \arrow[bend left=10,swap]{ll}[below]{X_{m-2}} \arrow[bend left=10,swap]{uuullll}[left]{Y_{m-1}}\\
\end{tikzcd}  \\
More explicitly, $\mathfrak{M}^{\tau}(Q_m,\beta)$ consists
$G(\alpha) = \prod_{i=0}^{m-1} \GL(\alpha_i)/\c^*$-orbits of tuples $(X_i, Y_i, v, w)$, 
where $X_{i}:=X_{a_i}$, $Y_i:=X_{a_i^*}$ for $i=0, 1,...,m-1$, $v:=X_{a_\infty}$,
$w:=X_{a_\infty^*}$,
satisfying 
\begin{eqnarray}
\label{qv1}
X_0 Y_0 - Y_{m-1} X_{m-1} +vw = \lambda_0 \,  I_{\alpha_0}  \\
\label{qv2}
X_i Y_i - Y_{i-1} X_{i-1} = \lambda_i \, I_{\alpha_i} \quad i \neq 0 \\
\label{qv3}
-wv =  \lambda_{\infty}    
 \end{eqnarray}
and the $G(\alpha)$-action is given by 
$$(g_0,...,g_{m-1}).(X_i,Y_i,v,w)\, :=\, (g_i\, X_i\, g_{i+1}^{-1}, g_{i+1}\,Y_i\,g_i^{-1}, g_0\,v, w\,g_0^{-1})\, .$$
Alternatively, if we set $X$ and $Y$ to be
\begin{equation}
 \label{newdta}
\begin{bmatrix} 0 & X_0 & 0 & \ldots & 0 \\ 
0 & 0 & X_1 & \ldots  & 0 \\
0 & 0 & 0 & \ddots & 0 \\
\vdots & \vdots & \ddots & \ddots & X_{m-2} \\
X_{m-1} & 0 & \ldots & 0 & 0
\end{bmatrix}  \  , \
\begin{bmatrix} 0 & 0 & 0 & \ldots & Y_{m-1} \\ 
Y_0 & 0 & 0 & \ldots & 0 \\
0 & Y_1 & 0 & \ddots  & \vdots \\
\vdots & \vdots & \ddots & \ddots & 0 \\
0 & 0 & \ldots & Y_{m-2} & 0
\end{bmatrix} 
\end{equation}
and
\begin{equation*}
 \bold{v}\,:=\,(v^T, 0_{\alpha_2},\ldots, 0_{\alpha_{m-1}})^T\, , \, 
 \bold{w}\,:=\, (w, 0_{\alpha_2},\ldots, 0_{\alpha_{m-1}})\, , 
 \end{equation*}
 \begin{equation*}
\Lambda\,:=\, \mathrm{Diag}[\,\lambda_0 I_{\alpha_0}, \ldots, \lambda_{m-1}I_{\alpha_{m-1}}\,],
\end{equation*}
then \eqref{qv1}-\eqref{qv3}  is equivalent to
\begin{equation}
\label{neweq}
XY - YX + \bold{v} \bold{w}\, =\, \Lambda \, , \, \bold{w} \bold{v}=\lambda_{\infty} \, .
\end{equation}
Thus, points of $\mathfrak{M}^{\tau}(Q_m,\beta)$ can be presented by classes
of quadruples $(X,Y, \bold{v},\bold{w})$ satisfying \eqref{neweq}.
The following
is a direct consequence of Theorem~\ref{CBdim}.
\begin{lemma}
\label{lemmamain}
Let $\lambda \in \c^m$ be generic for $\tilde{A}_{m-1}$, let $\alpha \in \z^{m}$,
and let $\tau=(-\lambda \cdot \alpha,\lambda)$. Then:
\begin{enumerate}
\item[(i)] Every $\Pi^{\tau}$-module of dimension vector $\beta=(1,\alpha)$ is simple.\\
\item[(ii)] The vector $\beta=(1,\alpha) \in \Sigma_{\tau}$  if and only if $\beta$ is a positive root of $Q_m$.
 In this case,
$$ \mathrm{dim}_{\c} \, \mathfrak{M}^{\tau}(Q_m, \beta)
=2p(\beta)=2 \alpha_0 - \sum_{i \in \z/m\z} (\alpha_i-\alpha_{i+1})^2 \, .$$ 
\end{enumerate}
\end{lemma}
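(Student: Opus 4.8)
The plan is to deduce both parts from Crawley-Boevey's Theorem~\ref{CBdim} after isolating a single combinatorial fact about the dimension vector $\beta=(1,\alpha)$. The fact I would establish first is the following.

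\emph{Claim.} Under the genericity hypothesis on $\lambda$, the vector $\beta=(1,\alpha)$ admits no decomposition $\beta=\beta^{(1)}+\cdots+\beta^{(r)}$ with $r\ge 2$ in which every $\beta^{(t)}$ is a positive root of $Q_m$ with $\tau\cdot\beta^{(t)}=0$. To prove the Claim I would use that the $\infty$-coordinate of $\beta$ equals $1$. Since each $\beta^{(t)}$ is a positive root, its $\infty$-coordinate is a non-negative integer, and these coordinates sum to $1$; hence exactly one summand, say $\beta^{(1)}$, has $\infty$-coordinate $1$ while all others have $\infty$-coordinate $0$. Because $r\ge 2$, there is at least one summand $\beta^{(t)}=(0,\gamma)$ with $\gamma\neq 0$ supported on the vertices $\{0,\dots,m-1\}$. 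Since $\tilde{A}_{m-1}$ is the full subquiver of $Q_m$ on these vertices, and the symmetric form of $Q_m$ restricts to that of $\tilde{A}_{m-1}$ on $\infty$-free vectors, $\gamma$ is a positive root of $\tilde{A}_{m-1}$; moreover $\tau\cdot\beta^{(t)}=\lambda\cdot\gamma=0$. This contradicts the assumption that $\lambda$ is generic for $\tilde{A}_{m-1}$, proving the Claim.

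Given the Claim, all three assertions are formal. For (i), I would argue that any $\Pi^{\tau}$-module $M$ of dimension vector $\beta$ is simple: if it were not, a composition series would have at least two factors, each a simple $\Pi^{\tau}$-module, whose dimension vectors lie in $\Sigma_{\tau}$ by Theorem~\ref{CBdim} and are therefore positive roots with vanishing $\tau$-pairing; their sum is $\beta$, producing exactly the decomposition forbidden by the Claim. For (ii), one direction is definitional, since $\Sigma_{\tau}$ consists of positive roots. For the converse, note first that $\tau\cdot\beta=-(\lambda\cdot\alpha)+\lambda\cdot\alpha=0$ directly from the definition of $\tau$; the Claim then shows that the decomposition condition in the definition of $\Sigma_{\tau}$ is satisfied vacuously, so every positive root $\beta=(1,\alpha)$ lies in $\Sigma_{\tau}$. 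The dimension statement follows from Theorem~\ref{CBdim}, which gives $\dim \mathfrak{M}^{\tau}(Q_m,\beta)=2p(\beta)$; a direct evaluation of the Ringel form on $\beta=(1,\alpha)$ yields $p(\beta)=\alpha_0-\sum_i\alpha_i^2+\sum_i\alpha_i\alpha_{i+1}=\alpha_0-\tfrac12\sum_{i\in\z/m\z}(\alpha_i-\alpha_{i+1})^2$, which is the stated formula after multiplying by $2$.

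The only step requiring genuine care is the reduction inside the Claim: identifying an $\infty$-free positive root of $Q_m$ with a positive root of $\tilde{A}_{m-1}$ (a standard full-subquiver fact) and, crucially, observing that the genericity of $\lambda$ is imposed for $\tilde{A}_{m-1}$ rather than for $Q_m$. This is precisely what is needed, because the summands that must be excluded are exactly the $\infty$-free ones, whose pairing with $\tau$ reduces to a pairing with $\lambda$ over $\tilde{A}_{m-1}$. Once this point is settled, parts (i) and (ii) and the dimension count are immediate consequences of Theorem~\ref{CBdim}.
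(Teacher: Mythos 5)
Your proposal is correct and takes essentially the same route as the paper: the paper's proof is a one-line citation of \cite[Proposition~3]{BCE}, and the argument there is precisely yours — deduce everything from Crawley-Boevey's Theorem~\ref{CBdim} after observing that in any decomposition $\beta=\sum_t\beta^{(t)}$ into positive roots with $\tau\cdot\beta^{(t)}=0$, some summand must have $\infty$-coordinate $0$, hence be a positive root of $\tilde{A}_{m-1}$ orthogonal to $\lambda$, contradicting genericity. Your write-up correctly supplies the details (including the full-subquiver fact and the evaluation of $p(\beta)$) that the paper leaves to the reference.
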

\begin{proof}
The proof is similar  to that of \cite[Proposition~3]{BCE}.
\end{proof}

The quiver variety for $\alpha=(n,...,n)\in \mathbb{N}^m$, denoted
by $\mathcal{C}^\tau_n(Q_m)$,  is called 
\textit{the n-th Calogero-Moser space} associated with $Q_m$. Explicitly, we have
\begin{equation*}
 \mathcal{C}_{n}^{\tau}(Q_m):=\mathfrak{M}^{\tau}(Q_m, \beta)\, , \quad \beta=(1,n,...,n)\, ,
\quad \tau=\bigg(-n\sum_{i=0}^{m-1} \lambda_i, \lambda_0,...,\lambda_{m-1}\bigg)\,.
\end{equation*}
It was shown by Etingof and Ginzburg \cite[Proposition~11.11]{EG} that $ \mathcal{C}_{n}^{\tau}(Q_m)$
is a smooth symplectic affine algebraic variety of dimension $2n$. These varieties are
natural generalization of the classical Calogero-Moser spaces corresponding to the case $m=1$.
More explicitly, in this case $\mathcal{C}_{n}^{\tau}(Q_m) \cong \mathcal{C}_{n}$ for all $\tau \neq 0$ where
\begin{eqnarray*}
\mathcal{C}_n\, &:=& \, \{ (X, Y, v, w) \, | \, X, Y \in \End(\c^n), \, v \in \Hom(\c^n, \c),\\[0.15cm]
&& \, w \in \Hom(\c, \c^n) , \, XY -YX + \mathrm{Id}_n \, =\,  vw \}\,  /\!\!\!\ / \GL_n(\c). \nonumber
\end{eqnarray*}
These spaces have been studied extensively by Wilson \cite{W}.
%
%
%
%
%
%

The  generating set for general quiver varieties,
 discussed at the end of the previous section, can be significantly  
reduced for $\mathfrak{M}^{\tau}(Q_m, \beta)$.
Indeed for $\lambda \in \c^m$ generic, we have $\Pi^{\tau}e_0 \Pi^{\tau}=\Pi^{\tau}$
(see \cite[Lemma 8.5]{CBH}), hence any closed path in $\Pi^{\tau}$
is a sum of paths containing vertex $0$. Since the trace function
is invariant under a cyclic permutation, we may only consider
traces of paths starting and ending at $0$. 
Let $(X_i,Y_i,v, w)$ be a representative of a point  in
 $\mathfrak{M}^{\tau}(Q_m,\beta)$.  
 We set
 $$ A:=Y_{m-1} \ldots Y_1 Y_0\, , \quad B:= X_0 X_1 \ldots X_{m-1}\ , $$
$$C_0:=\mathrm{Id}_{n_0}\, , \quad C_k:=Y_{m-1} \ldots Y_{m-k}\, X_{m-k} \ldots X_{m-1} .$$
 Note that each of them is a closed path at $0$.  These are special paths starting with $Y_i$'s
 and then followed by $X_j$'s. We consider the following functions
\begin{equation}
\label{GHgen}
G^{i,j}_k:=\Tr(A^iC_kB^j)\, , \quad H^{i,j}_k:=w(A^iC_kB^j)v
\end{equation}
 on $\mathfrak{M}^{\tau}(Q_m,\beta)$.
 If we view $(X_i,Y_i,v,w)$  in terms of $(X,Y,\bold{v},\bold{w})$ 
 as in \eqref{newdta} and \eqref{neweq}, then they have the following presentation
 \begin{equation*}
G^{i,j}_k=\mathrm {Tr} (Y^{mi+k} X^{mj+k})\, , 
\quad H^{i,j}_k=\bold{w}(Y^{mi+k} X^{mj+k})\bold{v}\, .
\end{equation*}
Hence, by the Cayley-Hamilton theorem, it suffices to consider functions $G_k^{i,j}$ 
 and $H^{i,j}_k$ for $k,i,j \in Z_{\ge 0}$ such that $ \max\{mi+k, mj+k\} \le \alpha_0+...+\alpha_{m-1}$.
We have the following refinement
 of Le Bruyn-Procesi's result:
\begin{theorem}
\label{genfuncmain}
Let $\lambda \in \c^m$ be generic.
 Then, for $\tau=(-\lambda \cdot \alpha,\lambda)$
 and $\beta=(1,\alpha) \in \Sigma_{\tau}$, 
each of the sets 
$\{G^{i,j}_k\}$ and $\{H^{i,j}_k\}$ forms a generating
set for the ring $\mathcal{O}(\mathfrak{M}^{\tau}(Q_m, \beta))$.
\end{theorem}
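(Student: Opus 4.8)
The plan is to begin with the Le Bruyn--Procesi theorem recalled above, which gives that $\mathcal{O}(\mathfrak{M}^{\tau}(Q_m,\beta))$ is generated by the traces of closed paths, and then to cut these down to the two asserted families. I would work throughout with the matrices $X,Y,\bold{v},\bold{w}$ of \eqref{newdta}--\eqref{neweq}, where the preprojective relation takes the single clean form $[X,Y]=\Lambda-\bold{v}\bold{w}$ with $\Lambda=\mathrm{Diag}[\lambda_iI_{\alpha_i}]$ block-scalar. A closed path is a word in $X,Y$ possibly interrupted by excursions through the vertex $\infty$; since $\dim_\infty=1$, each such excursion contributes the rank-one factor $\bold{v}\bold{w}$. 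Using cyclicity of the trace together with $\Pi^{\tau}e_0\Pi^{\tau}=\Pi^{\tau}$ (genericity of $\lambda$), I would first reduce to traces $\Tr(W)$ of words $W$ in $X,Y$ based at $0$ and to the matrix coefficients $\bold{w}W\bold{v}$.

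Next I would eliminate the matrix coefficients in favour of traces. Indeed
$$\bold{w}W\bold{v}=\Tr(W\bold{v}\bold{w})=\Tr(W(\Lambda-[X,Y]))=\Tr(W\Lambda)-\Tr([W,X]Y),$$
so every $\bold{w}W\bold{v}$, and in particular every $H^{i,j}_k$, is a polynomial in traces of words. Hence $\mathcal{O}(\mathfrak{M}^{\tau}(Q_m,\beta))$ is already generated by the functions $\Tr(W)$ alone.

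To obtain the family $\{G^{i,j}_k\}$ I would straighten words into the normal form $Y^aX^b$. Each elementary move $XY=YX+\Lambda-\bold{v}\bold{w}$ replaces, under the trace, a word by its partially sorted version, plus a strictly shorter trace (from the block-scalar $\Lambda$), plus a product of shorter matrix coefficients $\bold{w}(\cdot)\bold{v}$ (from the rank-one $\bold{v}\bold{w}$); the latter re-enter as traces by the previous paragraph. A double induction on the length of $W$ and on the number of inversions then expresses $\Tr(W)$ as a polynomial in the normal-form traces $\Tr(Y^aX^b)$. Only balanced words, those with $\#X\equiv\#Y\bmod m$, give nonzero $G(\alpha)$-invariants, and this congruence is preserved by every move, so the surviving normal forms are exactly $\Tr(Y^{mi+k}X^{mj+k})=G^{i,j}_k$. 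I expect the bookkeeping that closes this induction, in particular controlling the length-preserving but inversion-altering terms produced by the rank-one splittings, to be the main technical point; a cleaner alternative is to degenerate to the commuting case $[X,Y]=0$, for which generation by $\{\Tr(Y^aX^b)\}$ is the classical statement about symmetric functions of the joint spectrum, and then to lift the generation statement along the filtration in which $\Lambda$ and $\bold{v}\bold{w}$ are lower order.

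Finally, for the family $\{H^{i,j}_k\}$ it suffices to prove $\c[\{G^{i,j}_k\}]=\c[\{H^{i,j}_k\}]$. One inclusion is the elimination step applied to the normal monomials. For the reverse inclusion the same identity reads $H^{i,j}_k=\Tr(Y^{mi+k}X^{mj+k}\Lambda)-\Tr(Y^{mi+k}X^{mj+k}[X,Y])$, which exhibits $H^{i,j}_k$ as a $\lambda$-weighted block-trace of $Y^{mi+k}X^{mj+k}$ plus traces of strictly higher degree. Using the Cayley--Hamilton theorem to cut off the degree at $\alpha_0+\cdots+\alpha_{m-1}$ (this is exactly the stated bound $\max\{mi+k,mj+k\}\le\alpha_0+\cdots+\alpha_{m-1}$), and the genericity of $\lambda$ to control the leading block-weights, I would solve this triangular system by downward induction on the degree, recovering each $G^{i,j}_k$ as a polynomial in the $H^{i,j}_k$. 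Carrying out this inversion in the cyclic, block-graded setting is the part of the argument I expect to require the most care.
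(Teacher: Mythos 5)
Your outline shares its skeleton with the paper's proof (Le Bruyn--Procesi, reduction to paths based at $0$ via $\Pi^{\tau}e_0\Pi^{\tau}=\Pi^{\tau}$, straightening to the normal forms $A^iC_kB^j$), but the mechanism you propose for passing between the two families of functions is circular, and that passage is exactly where the content of the theorem lies. Your second-paragraph identity $\mathbf{w}W\mathbf{v}=\Tr(W\Lambda)-\Tr(W[X,Y])$ is the straightening relation $XY=YX+\Lambda-\mathbf{v}\mathbf{w}$ read backwards under the trace, so when, inside the straightening induction, you let the rank-one corrections ``re-enter as traces by the previous paragraph,'' the substitution exactly undoes the straightening move. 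Concretely, the move is $\Tr(UXYV)=\Tr(UYXV)+\Tr(U\Lambda V)-\mathbf{w}VU\mathbf{v}$; replacing $\mathbf{w}VU\mathbf{v}$ by $\Tr(VU\Lambda)-\Tr(VUXY)+\Tr(VUYX)$ and using cyclicity ($\Tr(VUXY)=\Tr(UXYV)$, $\Tr(VUYX)=\Tr(UYXV)$) collapses the whole identity to the tautology $\Tr(U\Lambda V)=\Tr(VU\Lambda)$. So your double induction on (length, inversions) never closes. The same defect undermines the last step: the terms $\Tr(Y^{mi+k}X^{mj+k}[X,Y])$ are not ``of strictly higher degree'' in any usable sense (modulo the relations they collapse back to degree $m(i+j)+2k$ and lower), and Cayley--Hamilton cannot serve as the base of a downward induction, since it expresses the top-degree traces as polynomials in lower-degree ones, i.e.\ in the very unknowns you are solving for; the system you call triangular is in fact circular.

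The missing ingredient is a \emph{non-tautological} relation between $\Tr$ and $\mathbf{w}(\cdot)\mathbf{v}$, which is the paper's Proposition~\ref{trantrwv}: multiply the moment-map relation at vertex $0$ by a closed path, and then transport the resulting defect $X_0Y_0$ all the way around the cycle using the relations \eqref{qv2} at vertices $1,\dots,m-1$ --- these contain no $vw$ term, so the two length-$(n+2)$ traces cancel exactly, leaving
\[
(\lambda_0+\cdots+\lambda_{m-1})\,\Tr(a_n\cdots a_1)\;=\;w(a_n\cdots a_1)v\;+\;(\text{terms of strictly smaller length}).
\]
Genericity of $\lambda$ enters precisely here, as $\lambda\cdot\delta=\sum_i\lambda_i\neq 0$: it makes this system triangular by length with invertible diagonal, and an \emph{upward} (not downward) induction on length then converts each family into the other; your proposal invokes genericity only vaguely and never produces the factor $\sum_i\lambda_i$. (For $m=1$ the cancellation is visible by hand: $wY^iv=\lambda_0\Tr(Y^i)+\Tr(Y^{i+1}X)-\Tr(Y^iXY)=\lambda_0\Tr(Y^i)$.) A secondary gap: the weighted traces $\Tr(W\Lambda)$ are not among your generators $G^{i,j}_k$; relating block traces based at different vertices costs the paper a separate argument (Lemma~\ref{lemmatrik}), which your plan elides. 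The suggested alternative of degenerating to $[X,Y]=0$ and ``lifting along the filtration'' is likewise unsubstantiated, as it presumes the associated graded of $\mathcal{O}(\mathfrak{M}^{\tau})$ is the invariant ring of the degenerate fiber, a flatness statement that itself requires proof.
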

\begin{remark}
From now on, for all quiver varieties associated to $Q_m$, we will assume that $\lambda$ is generic.
\end{remark}
The proof of Theorem~\ref{genfuncmain} will be given in the Appendix.

\section{Group action on quiver varieties}
\label{sec3}
In this section we describe a natural subgroup of the automorphism group of 
the free associative algebra on two generators and
define its action on quiver varieties. 

 Recall that the set of trivial paths $\{e_i\}_{i \in I_{\infty}}$ of $Q_m$
form a set of central primitive idempotents, hence 
$S:=\oplus_{i \in I_{\infty}} \c e_i$ is a commutative semisimple algebra. 
Let $\mathrm{Aut}_S(\Pi^{\tau}(Q_m))$ be the group of
$S$-algebra automorphisms of $\Pi^{\tau}(Q_m)$.
By functoriality this group acts on each variety $\mathfrak{M}^{\tau}(Q_m, \beta)$.
In this section, for each integer $m\ge 1$, 
we define the group $G_m$ together with a group homomorphism
to $\mathrm{Aut}_S(\Pi^{\tau}(Q_m))$.
Then $G_m$ acts on $\mathfrak{M}^{\tau}(Q_m, \beta)$ via this homomorphism. 
Note that the case  $m=1$ has been investigated by Berest and Wilson \cite{BW}.
More explicitly, they showed that the action of $G_1$ on each of
the Calogero-Moser space $\mathcal{C}_n$ is transitive.  
Later, in  \cite{BEE}, Berest and two authors of this paper proved 
that $G_1$ acts doubly transitively on  $\mathcal{C}_n$.

Let $R \, = \, \c \langle x, y \rangle$ be the free associative algebra on two generators. 
We denote by $\mathrm{Aut}_{\omega} (R)$ the group of algebra
 automorphisms of $R$ fixing $\omega=xy-yx$. 
Every $\sigma \in \mathrm{Aut}_{\omega} (R)$
 is determined by its action on $x$ and $y$: we will write 
  $(\sigma (x), \, \sigma (y))$ for $\sigma$, where $\sigma (x)$ and $\sigma (y)$ are 
 elements in $R$. 
 We define  $G_m$ to be the subgroup of $\mathrm{Aut}_{\omega} (R)$
 generated by automorphisms
\begin{eqnarray}
\la{x-gr}
\phi_{k,\nu}  &: =& (x, y+ \nu x^{km-1}) \, , \quad \nu \in \c \, , \, k \geq 1,\\
\la{y-gr}
\psi_{k, \mu} &:=&  (x+ \mu y^{km-1}, y)\, , \quad \mu \in \c  \quad k \geq 1.
\end{eqnarray}
%
%
We now discuss a structure of the group $G_m$. First, note for $m\ge 2$, the group $G_m$ is a subgroup of $G_1$.
Second, Makar-Limanov \cite{ML}  showed that $G_1$ is, in fact, equal to $\mathrm{Aut}_{\omega} (R)$.
He also proved  that the projection of $R$ onto $\c[x,y]$ induces an isomorphism of
groups $\mathrm{Aut}_{\omega} (R) \to \mathrm{Aut}_{\omega} (\c[x,y])$, where the latter
is the subgroup of $\Aut(\c[x,y])$ of those automorphisms with Jacobian $1$. Then the well-known
result of Jung and van der Kulk states that $G_1$ can be presented as the {\it amalgamated
free product} $A \ast_U B$, where $A$
is the subgroup of symplectic automorphisms
$$
 (ax+by+e, cx+dy+f)\, , \, a,b,...,f \in \c\, , \, ad-bc=1\, ,
 $$
$B$ is the subgroup of triangular transformations
$$
(ax+q(y),a^{-1}y+h)\, , \, a \in \c^*, \, h \in \c, \,q(y) \in \c[y]
$$
and $U= A \cap B$. For $m \ge 2$, in order to describe the structure of $G_m$ we distinguish two subcases: $m=2$ and $m \geq 3$. 
Let $\Phi$ and $\Psi$ be abelian subgroups generated by automorphisms \eqref{x-gr} and \eqref{y-gr} respectively. Then
%
\begin{proposition} \mbox{}
\label{strGm}
\begin{enumerate}
\item $G_2 = A_1 \ast_{U_1} B_1$ where $A_1 = \SL_2 (\c)$
and $B_1$ is the subgroup of $B$ defined by constraints $q(0)=0$ and $h=0$,
and $U_1= A_1 \cap B_1$. 
\item For $m\ge 3$ we have $G_m \cong \Phi \ast \Psi $.
\end{enumerate}
\end{proposition}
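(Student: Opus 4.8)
The plan is to deduce both parts from the amalgamated free product decomposition $G_1=\mathrm{Aut}_\omega(R)=A\ast_U B$ recalled above (Makar-Limanov together with Jung--van der Kulk), using that $G_m\le G_1$. The organizing device is the $\z/2$-grading on $R$ in which $x$ and $y$ both have odd degree, equivalently conjugation by the involution $\theta:=(-x,-y)$. Each generator $\phi_{k,\nu},\psi_{k,\mu}$ sends $x$ and $y$ to odd homogeneous elements, so every element of $G_m$ commutes with $\theta$ and preserves this grading; thus $G_m\subseteq Z_{G_1}(\theta)$. This at once identifies the grading-preserving parts of the factors: an affine symplectic map $(ax+by+e,cx+dy+f)$ is grading-preserving iff $e=f=0$, and a triangular map $(ax+q(y),a^{-1}y+h)$ iff $h=0$ and $q$ has no constant term and only odd-degree monomials. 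These are precisely $A_1=\SL_2(\c)=A\cap G_2$ and $B_1=B\cap G_2$, with $U_1=A_1\cap B_1$. The same grading explains the dichotomy: the degree-one generators $\phi_{1,\nu}=(x,y+\nu x)$ and $\psi_{1,\mu}=(x+\mu y,y)$ exist only for $m=2$ (where $km-1=1$ is attained at $k=1$) and disappear for $m\ge 3$ (where $km-1\ge m-1\ge 2$).

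For part (1) I would first prove $G_2=\langle A_1,B_1\rangle$. One inclusion is immediate: $A_1=\SL_2(\c)$ is generated by the transvections $\phi_{1,\nu},\psi_{1,\mu}\in G_2$, while $B_1$ is generated by its diagonal torus (inside $\SL_2$) together with $\Psi$, so $A_1,B_1\subseteq G_2$. For the reverse inclusion, $\Psi\subseteq B_1$, and conjugating by the rotation $w=(y,-x)\in\SL_2$ one checks $\phi_{k,\nu}=w\,\psi_{k,-\nu}\,w^{-1}$ (here $km-1$ is odd, so $(-x)^{km-1}=-x^{km-1}$), hence $\Phi\subseteq\langle A_1,B_1\rangle$ and $G_2=\langle\Phi,\Psi\rangle=\langle A_1,B_1\rangle$. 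To upgrade this to the amalgam $A_1\ast_{U_1}B_1$ I would invoke the normal form theorem for $A\ast_U B$: it suffices to verify the ``no-collapse'' conditions $A_1\cap U=U_1=B_1\cap U$, which reduce to the explicit computation that $U$ (the triangular symplectic-affine maps) meets $\SL_2$ and meets $B_1$ in the common Borel $U_1=\{(ax+by,a^{-1}y)\}$. Granting these, any alternating reduced word in $A_1\setminus U_1$ and $B_1\setminus U_1$ stays reduced in $A\ast_U B$, hence is nontrivial, so $\langle A_1,B_1\rangle\cong A_1\ast_{U_1}B_1$.

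For part (2), with $m\ge 3$, the subgroups $\Phi,\Psi$ are abelian with $\Phi\cap\Psi=\{\id\}$, and crucially every nontrivial element of $\Phi$ (resp. $\Psi$) moves $y$ (resp. $x$) by a polynomial of degree $\ge m-1\ge 2$. I would prove freeness by a leading-form degree estimate. Given a reduced alternating word $g=\gamma_1\cdots\gamma_n$ with each $\gamma_i$ nontrivial, I track $(g(x),g(y))$ through the composition from the inside out. Since each syllable substitutes a pure power into a pure power with nonzero leading coefficient, no top-degree cancellation occurs: the leading form of the dominant component remains a pure power of $x$ or of $y$ (alternating with the syllables), and its degree is multiplied by the degree $\ge 2$ of each successive syllable. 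Hence $\max(\deg g(x),\deg g(y))\ge 2^{n}>1$ for $n\ge 1$, so $g\neq\id$. This shows the canonical surjection $\Phi\ast\Psi\to G_m$ is injective, giving $G_m\cong\Phi\ast\Psi$. For $m=2$ the same estimate fails exactly because of the degree-one syllables, which is consistent with the amalgam of part (1).

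The main obstacle I anticipate is the bookkeeping in part (1): pinning down the three intersections $A_1\cap U$, $B_1\cap U$, $A_1\cap B_1$ precisely and checking that reducedness is inherited from $A\ast_U B$, so that $\langle A_1,B_1\rangle$ genuinely splits as an amalgam rather than collapsing. Part (2) is more routine once the multiplicativity of degree under composition of these shears is set up; the only delicate point is the non-cancellation of leading forms, which I would make rigorous through a Newton-polygon / highest-form argument adapted to the two-variable setting.
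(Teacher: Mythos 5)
Your proposal is correct in substance, and it supplies essentially all of the mathematical content that the paper omits: the paper's entire proof of this proposition consists of the assertions that part (1) ``is an easy consequence of the Jung--van der Kulk theorem'' and that part (2) ``is evident since $m\ge 3$.'' Your part (1) follows the same skeleton (the Jung--van der Kulk amalgam $G_1=A\ast_U B$) but does the actual work: the centralizer device $G_2\subseteq Z_{G_1}(\theta)$ with $\theta=(-x,-y)$, the generation statement $G_2=\langle A_1,B_1\rangle$ via $\phi_{k,\nu}=w\,\psi_{k,-\nu}\,w^{-1}$ with $w=(y,-x)$, and the verification $A_1\cap U=B_1\cap U=U_1$, which by the normal form theorem guarantees that $\langle A_1,B_1\rangle$ is genuinely the amalgam $A_1\ast_{U_1}B_1$. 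Your part (2), the leading-form induction showing that each nontrivial syllable multiplies the dominant degree by at least $m-1\ge 2$, is exactly the argument hiding behind the paper's word ``evident,'' and it is sound; note you do not even need injectivity of the projection to ${\rm Aut}_\omega(\c[x,y])$ here, since nontriviality of the image already suffices.

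Two remarks. First, and most importantly, your grading argument shows that the proposition \emph{as stated} is defective: with $B_1$ defined only by the constraints $q(0)=0$ and $h=0$, the element $(x+y^2,y)$ lies in $B_1$ but does not commute with $\theta$, whereas every element of $G_2$ does (all generators $\phi_{k,\nu}$, $\psi_{k,\mu}$ commute with $\theta$ since $2k-1$ is odd); hence $B_1\not\subseteq G_2$, and $G_2$ cannot equal $A_1\ast_{U_1}B_1$ for this literal $B_1$. The correct statement---which is what you in fact prove---requires $q$ to be an odd polynomial (this subsumes $q(0)=0$). Second, a small overstatement on your side: the claim that every generator of $G_m$ sends $x,y$ to odd homogeneous elements, so that $G_m\subseteq Z_{G_1}(\theta)$, is true only for even $m$; for $m=3$ the generator $\phi_{1,\nu}=(x,y+\nu x^2)$ does not commute with $\theta$. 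Since you invoke the grading only for $m=2$ and handle $m\ge 3$ by the degree estimate, this does not damage your argument, but that sentence should be restricted to $m=2$.
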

\begin{proof}
\mbox{}
\begin{enumerate}
\item It is an easy consequence of the Jung-van der Kulk theorem.
\item It is sufficient to show that there are no relations between elements of $\Phi$ and  $\Psi$, or equivalently,
$$ \phi_{k_1, \nu_1} \cdot \psi_{l_1,\mu_1}\cdot...\cdot  \phi_{k_n,\nu_n}\cdot \psi_{l_n,\mu_n} \neq 1,$$
which is evident since $m\ge 3$.
\end{enumerate}
\end{proof}
The next result allows us to define an action of $G_m$ on 
preprojective algebras and as a result on the corresponding quiver varieties. 
\begin{lemma}
\label{embGm}
For $\mu \in \c$ and $k \in \z_{\geq 0}$ the maps 
\begin{eqnarray}
&& \widetilde\psi_{k, \mu}  \, : \,  a_i \mapsto  \, a_i   \, +  \, \mu \,  
a_{i-1}^* \ldots a_0^* (a_{m-1}^* \ldots a_0^*)^{k-1} a_{m-1}^* \ldots a_{i+1}^*  \,  , 
\quad a_i^* \mapsto   \, a_i^*  \, ,  e_i \mapsto e_i \, \nonumber \\[0.3cm]
&&
\widetilde\phi_{k, \nu}  \, : \,  a_i \mapsto  \, a_i    \,  , 
\quad a_i^* \mapsto   \, a_i^*   \, +  \, \nu \,  
a_{i+1}\ldots a_{m-1} (a_0 \ldots a_{m-1})^{k-1} a_{0} \ldots a_{i-1} \, ,\,  
e_i \mapsto e_i \, \nonumber 
\end{eqnarray}
define automorphisms of $\Pi^{\tau}(Q_m)$. Furthermore, the map $\rho\, : \, G_m \rightarrow  {\rm Aut}_{S_\infty}(\Pi^{\tau}(Q_m))$ given
by $\rho (\psi_{k, \mu})  = \widetilde\psi_{k, \mu} \, , \, \rho (\phi_{k, \mu})  = \widetilde\phi_{k, \mu}$ defines a homomorphism of groups.
\end{lemma}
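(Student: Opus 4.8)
The plan is to establish three things in turn: that each $\widetilde\psi_{k,\mu}$ and $\widetilde\phi_{k,\nu}$ descends from the free path algebra $\c\bar{Q}_m$ to the quotient $\Pi^{\tau}(Q_m)$, that the resulting endomorphisms are invertible, and that $\rho$ respects composition. Both maps are prescribed on the generators $e_i,a_i,a_i^*$ (with the framing arrows $a_\infty,a_\infty^*$ fixed), so they extend uniquely to $S_\infty$-algebra endomorphisms of $\c\bar{Q}_m$ once one checks the listed images are legitimate paths; e.g. the correction term for $a_i$ is a path $i+1\to i$ of length $mk-1$ built from starred arrows, which matches $h(a_i),t(a_i)$, so this is routine. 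For well-definedness I would show that the map fixes the defining element $\theta:=\sum_{a\in E_\infty}(aa^*-a^*a)-\sum_i\tau_i e_i$, whence it preserves the ideal $(\theta)$ and descends. Writing $\widetilde\psi_{k,\mu}(a_i)=a_i+\mu T_i$ with $T_i$ the displayed starred path, and using that $a_i^*$ and the framing are fixed, one gets $\widetilde\psi_{k,\mu}(\theta)-\theta=\mu\sum_{i\in\z/m\z}(T_i a_i^*-a_i^* T_i)$. The key observation is that $T_i a_i^*$ and $a_i^* T_i$ are exactly the two closed starred paths of length $mk$ based at $i$ and at $i+1$; calling these $W_i$ and $W_{i+1}$, the sum telescopes cyclically, $\sum_i(T_i a_i^*-a_i^*T_i)=\sum_i(W_i-W_{i+1})=0$, so $\widetilde\psi_{k,\mu}(\theta)=\theta$. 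The symmetric computation (starred $\leftrightarrow$ unstarred) gives $\widetilde\phi_{k,\nu}(\theta)=\theta$. Invertibility is then immediate: since the correction terms involve only arrows that the map fixes, $\widetilde\psi_{k,\mu}\circ\widetilde\psi_{k,\mu'}=\widetilde\psi_{k,\mu+\mu'}$, so $\widetilde\psi_{k,\mu}^{-1}=\widetilde\psi_{k,-\mu}$, and likewise for $\widetilde\phi$.

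The homomorphism property is the delicate point. By Proposition~\ref{strGm} the group $G_m$ has a transparent presentation only for $m\ge 3$ (the free product $\Phi\ast\Psi$), whereas for $m=2$ it is the amalgam $\SL_2(\c)\ast_{U_1}B_1$, and checking the $\SL_2$-relations by hand on $\Pi^{\tau}$ would be unpleasant; I would therefore argue uniformly via $\Gamma$-equivariance. Let $\Gamma=\z/m\z=\langle g\rangle$ act on $R=\c\langle x,y\rangle$ by $g\cdot x=\zeta^{-1}x$, $g\cdot y=\zeta y$ with $\zeta=e^{2\pi i/m}$. Since $mk-1\equiv -1\pmod m$, the monomials $x^{mk-1},y^{mk-1}$ carry the same weights as $y,x$, so every generator $\psi_{k,\mu},\phi_{k,\nu}$ of $G_m$, and hence all of $G_m$, is $\Gamma$-equivariant. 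Consequently $\sigma\mapsto\sigma\ast 1$ is a homomorphism from $G_m$ into $\mathrm{Aut}(R\ast\Gamma)$. Under the standard identification $J\colon\c\,\overline{\tilde A}_{m-1}\xrightarrow{\ \sim\ }R\ast\Gamma$ sending $e_i\mapsto f_i$, $a_i\mapsto f_i x f_{i+1}$, $a_i^*\mapsto f_{i+1}y f_i$ (with $f_i$ the primitive idempotents of $\c\Gamma$), this transports to a homomorphism $\rho_{\mathrm{core}}\colon G_m\to\mathrm{Aut}(\c\,\overline{\tilde A}_{m-1})$. Computing weight components gives $J^{-1}(f_i\,y^{mk-1}f_{i+1})=T_i$, so $\rho_{\mathrm{core}}$ sends $\psi_{k,\mu}$ and $\phi_{k,\nu}$ to the restrictions of $\widetilde\psi_{k,\mu}$ and $\widetilde\phi_{k,\nu}$ to the core generators $a_i,a_i^*,e_i$.

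It then remains to bootstrap $\rho_{\mathrm{core}}$ to $\Pi^{\tau}(Q_m)$. Extending each $\rho_{\mathrm{core}}(\sigma)$ by the identity on the framing $e_\infty,a_\infty,a_\infty^*$ yields endomorphisms of the free algebra $\c\bar{Q}_m$; because the images of the core generators lie in the core subalgebra and the framing is fixed, these compose exactly as $\rho_{\mathrm{core}}$ does, so $\sigma\mapsto\hat\rho(\sigma)$ is a homomorphism $G_m\to\mathrm{Aut}(\c\bar{Q}_m)$ agreeing with $\widetilde\psi_{k,\mu},\widetilde\phi_{k,\nu}$ on generators. Finally, by the first step every $\hat\rho(\sigma)$ preserves $(\theta)$, so $\hat\rho$ descends to the asserted homomorphism $\rho\colon G_m\to\mathrm{Aut}_{S_\infty}(\Pi^{\tau}(Q_m))$. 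I expect the genuine obstacle to be precisely this homomorphism step at $m=2$, where the amalgamated structure of $G_2$ hides nontrivial relations; the equivariance/crossed-product device is what lets it go through with no case analysis, while the one substantive computation — the telescoping identity $\sum_i(T_i a_i^*-a_i^*T_i)=0$ (and its unstarred analogue for $\widetilde\phi$) — is the computational heart of the well-definedness assertion.
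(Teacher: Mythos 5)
Your proposal is correct, and while its first half coincides with the paper's argument, the route to the homomorphism property is genuinely different. For well-definedness the paper checks the vertex-wise relations $\sum_{h(a)=i}aa^*-\sum_{t(a)=i}a^*a=\tau_ie_i$ one vertex at a time; your telescoping identity $\sum_i(T_ia_i^*-a_i^*T_i)=\sum_i(W_i-W_{i+1})=0$ is exactly the same cancellation summed over $i$ (in the paper's computation the terms $T_ia_i^*$ and $a_{i-1}^*T_{i-1}$ are both the starred cycle $W_i$ and cancel at each vertex), and the invertibility argument $\widetilde\psi_{k,\mu}^{-1}=\widetilde\psi_{k,-\mu}$ is identical. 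The divergence is in the last step: the paper deduces that $\rho$ is a homomorphism from Proposition~\ref{strGm} and ``the universal property of the free product,'' which works cleanly for $m\ge 3$ where $G_m\cong\Phi\ast\Psi$ (one only needs that the $\widetilde\psi$'s and the $\widetilde\phi$'s separately satisfy the relations of the abelian groups $\Psi$ and $\Phi$, which is immediate since the correction terms lie in arrows each family fixes), but, as you observe, it does not literally apply at $m=2$, where $G_2$ is an amalgamated product $\SL_2(\c)\ast_{U_1}B_1$ and one would have to define the map on all of $\SL_2(\c)$ and verify compatibility on $U_1$. Your replacement — every generator of $G_m$ is $\Gamma$-equivariant for the weight action $g\cdot x=\zeta^{-1}x$, $g\cdot y=\zeta y$ because $km-1\equiv-1\pmod m$, so $\sigma\mapsto\sigma\ast 1$ is manifestly a group homomorphism $G_m\to\mathrm{Autk}(R\ast\Gamma)$, which the standard identification $\c\,\overline{\tilde{A}}_{m-1}\cong R\ast\Gamma$ transports to the core path algebra, with $J(T_i)=f_iy^{km-1}f_{i+1}$ matching the stated formulas, after which one extends by the identity on the framing and descends modulo $(\theta)$ — trades any appeal to a presentation of $G_m$ for the functoriality of the crossed-product extension. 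This makes the argument uniform in $m$ and in fact repairs the $m=2$ case, at the modest cost of setting up the (standard, convention-sensitive) isomorphism $J$; the final descent step is sound, since a general $\widehat\rho(\sigma)$ is a composite of generator images, each of which fixes $\theta$, hence preserves the ideal $(\theta)$ compatibly with composition.
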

\begin{proof}
We first need to prove that $\widetilde \psi_{k, \mu}$ and $\widetilde \phi_{k, \nu} \in  {\rm Aut}_{S_\infty}(\Pi^{\tau}(Q_m))$.
%
 We will check this for  $\widetilde\psi_{k, \mu}$ since similar proof works for $\widetilde\phi_{k, \mu}$. 
Indeed, for $i \in \{1,...,m-1\}$, we have
\begin{eqnarray}
&& \big[a_i +\mu \,  a_{i-1}^* \ldots a_0^* 
(a_{m-1}^* \ldots a_0^*)^{k-1} a_{m-1}^* \ldots a_{i+1}^* \big]\cdot a_{i}^*  \nonumber \\
&-& a_{i-1}^* \cdot \big[a_{i-1} +\mu \,  a_{i-2}^* \ldots a_0^* 
(a_{m-1}^* \ldots a_0^*)^{k-1} a_{m-1}^* \ldots a_{i}^* \big]  \nonumber\\
&=& a_i \cdot a_i^*-a_{i-1}^*a_{i-1}= \lambda_i \, e_i \nonumber
\end{eqnarray}
and for $i=0$, we have
\begin{eqnarray}
&& \big[a_0 +\mu \, (a_{m-1}^* \ldots a_0^*)^{k-1} a_{m-1}^* \ldots a_{1}^* \big]\cdot a_{0}^*  \nonumber \\
&-& a_{m-1}^* \cdot \big[a_{m-1} +\mu \,  a_{m-2}^* \ldots a_0^* 
(a_{m-1}^* \ldots a_0^*)^{k-1}  \big] + a_{\infty}\cdot a_{\infty}^* \nonumber\\
&=& a_i \cdot a_i^*-a_{i-1}^*a_{i-1}+ a_{\infty}\cdot a_{\infty}^*= \lambda_0 \, e_0 \, .\nonumber
\end{eqnarray}
Thus $\widetilde\psi_{k, \mu}$ defines an algebra endomorphism of  $\Pi^{\tau}(Q_m)$.
In order to see that $\widetilde\psi_{k, \mu}$ is an automorphism we note 
that $\rho (\psi_{k, \mu}) \circ \rho (\psi_{k,-\mu})=1$. 
According to Proposition~\ref{strGm} and the universal property of the free product of groups
we conclude that there exists a unique homomorphism with $\rho (\psi_{k, \mu})  = \widetilde\psi_{k, \mu} \, , \, \rho (\phi_{k, \mu})  = \widetilde\phi_{k, \mu}$.
This completes the proof.
\end{proof}
\begin{remark} The map $\c\bar{Q}_m \to \c \bar{{\tilde A}}_{m-1}$ 
that sends $a_{\infty}, a_{\infty}^*$ and $e_{\infty}$ to $0$,
and fixes all other vertices and edges, defines
an algebra epimorphism  $\Pi^{\tau}(Q_m)  \twoheadrightarrow 
\Pi^{\lambda}(\tilde{A}_{m-1})$.  The latter induces the group monomorphism 
$\mathrm{Aut}_{S}(\Pi^{\lambda}(\tilde{A}_{m-1})) \to 
{\rm Aut}_{S_\infty}(\Pi^{\tau}(Q_m))$.
Then it is easy to see that $\rho$ maps $G_m$ to $\mathrm{Aut}_{S}(\Pi^{\lambda}(\tilde{A}_{m-1}))$.
\end{remark}
%

Let $(X_i, Y_i, v, w) \in \mathfrak{M}^{\tau}(Q_m, \beta)$, where $i=0,...,m-1$
which satisfy \eqref{qv1}-\eqref{qv3}. Then, by Lemma~ \ref{embGm},
\begin{eqnarray}
 \psi_{k, \mu}.(X_i, Y_i, v, w)&=& (\tilde{X}_i , Y_i, v, w) \ , \mbox{where}\nonumber\\[0.1cm] 
 \label{ac1}
 \tilde{X}_i &:=& X_i + \mu Y_{i-1}...Y_0 ( Y_{m-1} \ldots Y_0)^{k-1} Y_{m-1} \ldots Y_{i+1}\, ;  \\[0.2cm]
\phi_{k, \nu} .(X_i, Y_i, v, w)&=& (X_i, \tilde{Y}_i,v, w) \, , \mbox{where} \nonumber \\ [0.1cm]
\label{ac2}
\tilde{Y}_i&:=& Y_i+ \nu  X_{i+1}\ldots X_{m-1} (X_0\ldots X_{m-1})^{k-1} X_0\ldots X_{i-1} ;
\end{eqnarray}
define an action of $G_m$ on $\mathfrak{M}^{\tau}(Q_m, \beta)$.
Moreover, this action in terms of quadruples $(X,Y,\bold{v},\bold{w})$ (see \eqref{newdta}
and \eqref{neweq}) is simpler:
\begin{eqnarray}
\psi_{k, \mu}.(X, Y, \bold{v}, \bold{w}) &=&(X+ \mu\, Y^{km-1}, Y,  \bold{v}, \bold{w})\, ,  \\[0.2cm]
\phi_{k,\nu}.(X, Y, \bold{v}, \bold{w}) &=&(X, Y+\nu\, X^{km-1},\bold{v}, \bold{w})\, .
\end{eqnarray}



%
\section{equivariance of reflections}
\label{sec4}

\subsection{Reflection functors}
Let $Q$ be a finite quiver. Let $i$ be a loop-free vertex. Suppose $\tau \in \c^I$ with $\tau_i \neq0$ and $\beta \in \mathbb{N}^I$
such that $\tau \cdot \beta=0$. Then there is an equivalence
functor
\begin{equation}
\label{Ei}
 E_i\,: \, {\rm Rep} (\Pi^{\tau}(Q), \beta) \, \longrightarrow \,
{\rm Rep} (\Pi^{r_i \tau}(Q), s_i (\beta)) \, ,
\end{equation}
where $s_i$ and $r_i$ act as in \eqref{wdim} and \eqref{wpar}.
The reflection functor was defined by Crawley-Boevey and Holland \cite[Theorem 5.1]{CBH} 
and plays an important role in studying deformed 
preprojective algebras.
The functor $E_i$ is $G(\beta)$-equivariant, that is, $G(\beta)$-orbits 
in ${\rm Rep} (\Pi^{\tau}, \beta)$ are mapped to $G(s_i(\beta))$-orbits
in ${\rm Rep} (\Pi^{r_i \tau}, s_i (\beta))$ and hence induces
a bijective map between the corresponding quiver varieties
\begin{equation}
\label{reflecvar}
\mathcal{R}_i: \mathfrak{M}^{\tau}(Q,\beta) \, \xrightarrow{\sim} \, 
\mathfrak{M}^{r_i \tau}(Q, s_i(\beta)) \, .
\end{equation}
This map was independently defined by Nakajima \cite{N} and Lusztig \cite{L}.
Later Maffei showed that, for generic
$\tau$, it is an isomorphism of algebraic varieties (see \cite{M}). We can naturally extend  \eqref{reflecvar} 
to the entire Weyl group. Indeed, for  $s \in W$ such that $s=s_{i_1}...s_{i_k}$,
 define $\mathcal{R}_s: = \, \mathcal{R}_{i_1}\circ ...\circ \mathcal{R}_{i_k}$.
Then by \eqref{reflecvar}, we have an isomorphism
\begin{equation}
\label{reflecvarW}
\mathcal{R}_s: \mathfrak{M}^{\tau}(Q,\beta) \, \xrightarrow{\sim} \, 
\mathfrak{M}^{s \tau}(Q, s(\beta)) \, .
\end{equation}
Now we will explicitly write down the map \eqref{reflecvar} for the quiver
$Q_m$. 
We start  by defining  $\mathcal{R}_i$ for $1\le i \le m-1$.
Let
$$
{\mathbb V}=(V_{\infty},V_0,...,V_{m-1}; X_0,...,Y_{m-1},v,w) \in \mathfrak{M}^{\tau}(Q_m,\beta) \, . 
$$
Let  $\mu: V_i \to V_{i+1} \oplus V_{i-1}$ and $\pi:V_{i+1} \oplus V_{i-1} \to V_i$  be maps given by $\mu(u)=(Y_{i}\, u, -X_{i-1} \,u)$,
  and $\pi(u_1,u_2)=1/\lambda_i (X_i(u_1)+Y_{i-1}(u_2))$ respectively. Using relation \eqref{qv2}, 
  we have $\pi \mu=1$ and hence $\mu\pi$ is an idempotent endomorphism
of $V_{i+1} \oplus V_{i-1}$. Thus ${\mathcal R}_i$ sends ${\mathbb V}$ to ${\mathbb V}'$, where
$V_j'=V_j$ for $j\neq i$ and $V_i'={\rm Im}(1-\mu\pi)$, and  $X_j'=X_j$ and $Y_j'=Y_j$ for $j\neq i-1, i$,
and 
\begin{eqnarray*}
&& X_i'=(-\lambda_i\,{\rm Id} + Y_iX_i, -X_{i-1}X_i)\, , \quad X_{i-1}'(u_1,u_2)=-u_2, \\
&& Y_{i-1}'=(Y_i\,Y_{i-1}, -\lambda_i-X_{i-1}Y_{i-1})\, , \quad Y_i'(u_1,u_2)=u_1,
\end{eqnarray*}
for $(u_1,u_2)\in V_i'$, $v'=v$, $w'=w$.
%

%
%
 
 Similarly for ${\mathcal R}_0$, let $\pi : V_{\infty} \oplus V_1 \oplus V_{m-1} \rightarrow V_0$ 
 and $\mu :  V_0 \rightarrow V_{\infty} \oplus V_1 \oplus V_{m-1}$ be given by
$$
\pi (x,y,z) = \frac{1}{\lambda_0} ( v x+ X_0 y + Y_{m-1} z)\, , \quad
\mu ( t ) =  (wt, Y_0\, t,  -X_{m-1}t ) ,
$$
%
%
and
hence by \eqref{qv1} we obtain $\pi\mu=1$, which implies that $\mu\pi$ is an idempotent 
endomorphism of $V_{\infty} \oplus V_1 \oplus V_{m-1} $.
 Therefore ${\mathcal R}_0$ is a map from ${\mathbb V}$ to ${\mathbb V}'$, where
$V_j'=V_j$ for $j\neq 0$ and $V_0'={\rm Im}(1-\mu\pi)$, and  $X_j'=X_j$ and $Y_j'=Y_j$ for $j\neq m-1, 0$,
and 
%
%
%
%
\begin{eqnarray}
\label{ref0x}
&& X_0'  = (w X_0,  -\lambda_0 {\rm Id}_{n_1} + Y_0 X_0 ,  -X_{m-1} X_0)\, , \,
 X_{m-1}'(u_1,u_2,u_3)   =  -u_3,   \\
 \label{ref0y}
&& Y_{m-1}' =  (w Y_{m-1} , Y_0 Y_{m-1}, -\lambda_0 {\rm Id}_{n_{m-1}} -X_{m-1} Y_{m-1} )\,, \,
Y_0' (u_1,u_2,u_3)=  u_2,\\
\label{ref0uv}
&&  v' = ( -\lambda_0 + w v, Y_0 v ,  -X_{m-1} v)\, , \,  
w' (u_1,u_2,u_3)   = u_1,
\end{eqnarray}
for $(u_1,u_2,u_3) \in V_0'$.


Our first application of the reflection functors is to show that a quiver variety
 associated to $Q_m$ 
 is isomorphic to the unique Calogero-Moser space 
 corresponding to the same quiver. To this end, we need the following 
 proposition
%
\begin{proposition}
\label{propbij}
 Let $\beta=(1,\alpha) \in \Sigma_{\tau}$, where 
 $\alpha=(\alpha_0,...,\alpha_{m-1}) \in \mathbb{N}^m$ . Then there is $w \in W_{\infty}$
 so that $w(\beta)=(1,{\bf n}) \in \mathbb{N}^{m+1}$, where ${\bf n}=(n,...,n)\in {\mathbb N}^{m}$.
\end{proposition}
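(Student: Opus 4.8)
The plan is to reduce the statement to an elementary descent argument in a better set of coordinates. Since every generator of $W_{\infty}$ fixes the $\infty$-coordinate (see \eqref{wact1}--\eqref{wact2}), the first coordinate of $\beta$ stays equal to $1$ throughout, so I only need to track the $\z^m$-part $\alpha$; write $\beta=(1,\alpha)$. I would then pass to the \emph{difference vector} $y=(y_0,\dots,y_{m-1})$ with $y_i:=\alpha_i-\alpha_{i-1}$ (indices mod $m$), which automatically satisfies $\sum_i y_i=0$. Because $\alpha$ is recovered from $y$ up to a single global additive constant, the target $(1,n,\dots,n)=(1,n\delta)$ corresponds precisely to $y=0$. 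So the proposition becomes: the $W_{\infty}$-orbit of $y(\beta)$ contains $0$.

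Next I would compute the action in these coordinates directly from \eqref{wact1}--\eqref{wact2}. A short substitution shows that for $1\le j\le m-1$ the reflection $s_j$ merely transposes $y_j$ and $y_{j+1}$ (cyclically, so $s_{m-1}$ swaps $y_{m-1}$ and $y_0$), while $s_0$ sends $(y_0,y_1)\mapsto(y_1+1,\,y_0-1)$ and fixes the remaining coordinates. Thus $s_1,\dots,s_{m-1}$ generate the full symmetric group on the coordinates $y_0,\dots,y_{m-1}$ (they are the adjacent transpositions along a path through all $m$ indices), and conjugating $s_0$ by these permutations produces, for every ordered pair of distinct indices $(a,b)$, a transformation $T_{a,b}\colon(y_a,y_b)\mapsto(y_b+1,\,y_a-1)$ leaving the other coordinates unchanged. (In other words, $W_{\infty}$ acts as the affine symmetric group $S_m\ltimes Q_{A_{m-1}}$, but I will not need that language.)

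The engine of the proof is a descent on the nonnegative integer $\sum_i y_i^2$. A one-line computation gives that $T_{a,b}$ changes this quantity by $2(y_b-y_a+1)$, which is strictly negative exactly when $y_a\ge y_b+2$. Hence, as long as $\max_i y_i-\min_i y_i\ge 2$, I can choose $a,b$ and strictly decrease $\sum_i y_i^2$; since it is a nonnegative integer, the process must terminate at some $y$ with $\max_i y_i-\min_i y_i\le 1$. Combined with $\sum_i y_i=0$ and integrality, this forces all $y_i$ equal and hence $y=0$ (two distinct values differing by $1$ cannot have integer average $0$). This yields $w\in W_{\infty}$ with $w(\beta)=(1,n,\dots,n)$ for some $n\in\z$.

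Finally I would pin down $n$ and its sign. The quantity $p(\beta)=1-\langle\beta,\beta\rangle$ depends only on the symmetric form $(\,,\,)$, which is $W$-invariant, so $p$ is constant along the orbit; evaluating it at $(1,n,\dots,n)$ via Lemma~\ref{lemmamain}(ii) gives $2p=2n$, whence $n=p(\beta)$. Since $\beta\in\Sigma_{\tau}$ is a positive root we have $\langle\beta,\beta\rangle\le 1$, so $n=p(\beta)\ge 0$ and $(1,\mathbf n)\in\N^{m+1}$, as required. The only delicate point is the bookkeeping in the second step: getting the signs in the $s_0$-action and in the conjugates $T_{a,b}$ exactly right, because the entire descent hinges on the constant $+1$ contributed by $s_0$ (this is precisely where the extra vertex $\infty$ of dimension $1$ enters). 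The rest is routine.
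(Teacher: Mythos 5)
Your proof is correct — I checked the sign-sensitive computations: with the action \eqref{wact1}--\eqref{wact2} one indeed gets $s_j\colon y_j\leftrightarrow y_{j+1}$ for $1\le j\le m-1$ (cyclically) and $s_0\colon (y_0,y_1)\mapsto (y_1+1,y_0-1)$, the change of $\sum_i y_i^2$ under $T_{a,b}$ is $2(y_b-y_a+1)$, and the endgame ($\max-\min\le 1$ plus $\sum_i y_i=0$ forces $y=0$) is sound. However, your route is genuinely different from the paper's. The paper works directly with $\alpha$: it takes the smallest index $k$ achieving $\max_i\alpha_i$, proves the claim $\alpha_{k-1}+\alpha_{k+1}\ge\alpha_k$ (resp.\ $\alpha_1+\alpha_{m-1}+1\ge\alpha_0$ when $k=0$) by contradiction using the non-negativity of $\dim\mathfrak{M}^{\tau}(Q_m,\beta)=2p(\beta)=2\alpha_0-\sum_i(\alpha_i-\alpha_{i+1})^2$ from Lemma~\ref{lemmamain}(ii), then applies $s_k$ and iterates, with termination controlled by the fact that the smallest index realizing the maximum strictly increases. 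Thus the paper's descent is tied to the geometry (a non-empty variety cannot have negative dimension) and uses $\beta\in\Sigma_{\tau}$ at every step to keep the intermediate vectors admissible, whereas your argument is purely root-theoretic: it proves the stronger statement that \emph{every} $(1,\alpha)$ with $\alpha\in\mathbb{Z}^m$ is $W_{\infty}$-equivalent to $(1,n,\dots,n)$ for some $n\in\mathbb{Z}$, and invokes $\beta\in\Sigma_{\tau}$ only once, at the end, to conclude $n=p(\beta)\ge 0$ (alternatively: $w(\beta)$ is a root whose $\infty$-coordinate is $1>0$, hence a positive root). Your monovariant $\sum_i y_i^2$ also gives a crisper termination argument than the paper's, whose final iteration step is stated rather tersely; the price is the extra bookkeeping of the coordinate change and the identification of the action as the affine symmetric group, which the paper avoids entirely.
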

\begin{proof}
The proof for $m=2$ is straightforward. Suppose $m>2$, and put $\mu={\rm max} \{\alpha_0,...,\alpha_{m-1}\}$.
We recall that the action of $W_{\infty}$ on  $\z^m$ is given by \eqref{wact1}-\eqref{wact2}.
%
%
Let $k$ be the smallest index such that $\alpha_k= \mu$. We claim that
$\alpha_{k+1}+\alpha_{k-1} \ge \alpha_k$ if $k>0$, and
$\alpha_1+\alpha_{m-1}+1 \ge \alpha_0$ if $k=0$. Assuming this claim, we have
$s_k(\beta)=(1,\alpha')$ so that $\alpha_k'=\alpha_{k+1}+\alpha_{k-1}-\alpha_k$
and $\alpha'_i=\alpha_i$ for $i\neq k$. In particular, $\alpha' \in \mathbb{N}^m$ and 
$\alpha'_k <\alpha'_{k+1}$.
The latter implies that if $\mu':={\rm max}\{\alpha_0',...,\alpha_{m-1}'\}$
and $k'$ is the smallest index with $\alpha_{k'}=\mu'$
then $k<k'$.  By repeating this procedure sufficiently many times
 yields $\alpha'$ are the same.

Now we proceed to prove the above claim. Assume on the contrary, for $k>0$,
$\alpha_k-\alpha_{k-1}-\alpha_{k+1}>0$.
Then, by Lemma~\ref{lemmamain}$(ii)$ we have
 \begin{eqnarray*}
 \mathrm{dim}_{\c} \, \mathfrak{M}^{\tau}(\beta) &\le & 2 \alpha_0
 -(\alpha_{k-1}-\alpha_k)^2-(\alpha_k-\alpha_{k+1})^2\\
 &\le & 2 \alpha_0 - 2\alpha_k(\alpha_k-\alpha_{k-1}-\alpha_{k+1})
 \le 2\alpha_0 - 2\alpha_k <0,
\end{eqnarray*}
which is impossible. Hence $\alpha_k-\alpha_{k-1}-\alpha_{k+1}\le 0$.

Similarly, for $k=0$ assuming $\alpha_0-(\alpha_1+\alpha_{m-1}+1)>0$, we obtain
\begin{eqnarray*}
\mathrm{dim}_{\c} \, \mathfrak{M}^{\tau}(\beta) &\le & 
2 \alpha_0-(\alpha_{m-1}-\alpha_0)^2-(\alpha_0-\alpha_1)^2\\
&\le &2 \alpha_0-2 \alpha_0(\alpha_0-\alpha_1-\alpha_{m-1})
\le 2\alpha_0-4\alpha_0=-2\alpha_0<0,
\end{eqnarray*}
and
again we get contradiction, which proves our claim.
\end{proof}
%
%
%
Combining  Proposition~\ref{propbij} with the above mentioned result of Maffei (see \eqref{reflecvarW}), we obtain
\begin{corollary}
\label{red}
For $\beta =(1, \alpha) \in \Sigma_{\tau}$, let $s \in W$ be such that $s(\beta)=(1,{\bf n})$.
Then there is an isomorphism of algebraic varieties
$$ 
\mathcal{R}_s: \mathfrak{M}^\tau(Q_m, \beta) \to {\mathcal C}_{n}^{s(\tau)}(Q_m) .
$$
\end{corollary}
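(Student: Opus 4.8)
The plan is to deduce the isomorphism directly by combining Proposition~\ref{propbij} with Maffei's result recorded in \eqref{reflecvarW}. First I would apply Proposition~\ref{propbij} to produce an element $s\in W_{\infty}\subset W$ with $s(\beta)=(1,{\bf n})$, where ${\bf n}=(n,\dots,n)$. Writing $s=s_{i_1}\cdots s_{i_k}$ as a word in the simple reflections $s_0,\dots,s_{m-1}$ (each of which fixes $\infty$ and acts at a loop-free vertex of $Q_m$), the associated reflection functors compose to give $\mathcal{R}_s=\mathcal{R}_{i_1}\circ\cdots\circ\mathcal{R}_{i_k}$, so it remains only to verify that this composite is a well-defined isomorphism and to identify its target.

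Next I would check that each reflection functor in the composite is legitimate, i.e. that the parameter at every reflected vertex is nonzero at the stage where it is applied. When $\mathcal{R}_{i_\ell}$ is applied, the relevant parameter value is $(r_{i_{\ell+1}}\cdots r_{i_k}\tau)\cdot\epsilon_{i_\ell}=\tau\cdot(s_{i_k}\cdots s_{i_{\ell+1}}\epsilon_{i_\ell})$, by iterating the adjunction $r_i\tau\cdot\beta=\tau\cdot s_i(\beta)$. Since the $s_{i_j}$ lie in $W_\infty$, the real root $s_{i_k}\cdots s_{i_{\ell+1}}\epsilon_{i_\ell}$ has vanishing $\infty$-component, hence is of the form $(0,\gamma)$ with $\gamma$ a real root of $\tilde{A}_{m-1}$; therefore $\tau\cdot(0,\gamma)=\lambda\cdot\gamma$, which is nonzero because $\lambda$ is generic for $\tilde{A}_{m-1}$. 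Thus each reflection functor \eqref{Ei} is defined, and by Maffei's theorem each $\mathcal{R}_{i_\ell}$ is an isomorphism of algebraic varieties, so their composite \eqref{reflecvarW} is one as well.

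Finally I would identify the codomain. By \eqref{reflecvarW} the target of $\mathcal{R}_s$ is $\mathfrak{M}^{s\tau}(Q_m,s(\beta))$; the pairing $s\tau\cdot s(\beta)=\tau\cdot\beta=0$ is preserved, and since $s(\beta)=(1,{\bf n})$ this variety is by definition the Calogero--Moser space $\mathcal{C}_n^{s(\tau)}(Q_m)$, yielding the stated isomorphism. The only point requiring genuine care---rather than a real obstacle---is the propagation of genericity through the chain of reflections sketched above, which guarantees that Maffei's isomorphism theorem applies at every step; everything else is a direct appeal to the two cited results.
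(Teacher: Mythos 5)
Your proposal is correct and follows essentially the same route as the paper, which obtains the corollary simply by combining Proposition~\ref{propbij} with Maffei's result \eqref{reflecvarW}. Your additional verification that the parameter at each reflected vertex stays nonzero (via the adjunction $r_i\tau\cdot\beta=\tau\cdot s_i(\beta)$ and genericity of $\lambda$ for $\tilde{A}_{m-1}$) is a detail the paper leaves implicit, and it is argued correctly.
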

On the ring of functions, the reflection $\mathcal{R}_s$ induces an algebra isomorphism
$$ \mathcal{R}_s^*\, : \, \mathcal{O}\big(\mathfrak{M}^{s(\tau)}(Q_m, s(\beta))\big) \to
\mathcal{O}\big(\mathfrak{M}^{\tau}(Q_m,\beta) \big) \, .$$
Next we describe the image of the generators $H_{k}^{i,j}$ under $ \mathcal{R}_l^*$ for $l \in \z / m \z$ (see Theorem~\ref{genfuncmain} and \eqref{GHgen}). 
 Let $(X_i,Y_i,v,w) \in \mathfrak{M}^{\tau}(Q_m,\beta)$. Let
$(X'_i,Y'_i,v',w')=\mathcal{R}_l (X_i,Y_i,v,w)$, and let  $H_{k}^{'i,j} = R_l ^* (H_{k}^{i,j})$.
The following will be essential for proving the equivariance of the reflection functors.
\begin{lemma}
\label{lemmaH}
For  $k \neq l$, $ H_{k}^{'i,j}= H_{k}^{i,j}$ and  $H_{l}^{'i,j}=H_{l}^{i,j}-\lambda_l \, H_{l-1}^{i-1,j-1}$.
\end{lemma}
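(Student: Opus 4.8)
The plan is to reduce the identity to a purely local computation at the reflected vertex. By definition $H_{k}^{'i,j}=\mathcal{R}_l^*(H_k^{i,j})$ is the value of the generator on the reflected datum, that is $H_{k}^{'i,j}=w'\big((A')^i(C_k)'(B')^j\big)v'$, where $A',B',(C_k)',v',w'$ denote the paths $A,B,C_k$ and the framing maps $v,w$ recomputed from $(X_i',Y_i',v',w')=\mathcal{R}_l(X_i,Y_i,v,w)$. So it is enough to express each of these building blocks through the original datum.

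First I would record how a path behaves as it passes through the reflected vertex. For $1\le l\le m-1$, direct substitution into the formulas for $\mathcal{R}_l$ gives two ``straight-through'' identities, $Y_l'Y_{l-1}'=Y_lY_{l-1}$ and $X_{l-1}'X_l'=X_{l-1}X_l$, and one ``turnaround'' identity, $Y_l'X_l'=Y_lX_l-\lambda_l\,\mathrm{Id}$. The first two hold because $Y_l'$ and $X_{l-1}'$ are, up to sign, the two coordinate projections out of $V_l'\subset V_{l+1}\oplus V_{l-1}$, while the third uses that the $V_{l+1}$-component of $X_l'$ is $-\lambda_l\,\mathrm{Id}+Y_lX_l$. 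Since for $l\neq 0$ the maps $v,w$ are attached at vertex $0$ and are fixed, and the loops $A=Y_{m-1}\cdots Y_0$ and $B=X_0\cdots X_{m-1}$ only run straight through vertex $l$, the straight-through identities immediately give $A'=A$, $B'=B$, $v'=v$ and $w'=w$.

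The dichotomy is then clear. The path $C_k$ runs straight through vertex $l$ for every $k$ except the single one whose turning point lies at $V_l$; for those $k$ one gets $(C_k)'=C_k$ and hence $H_{k}^{'i,j}=H_k^{i,j}$. For the exceptional index the turnaround identity replaces the bouncing factor by $Y_lX_l-\lambda_l\,\mathrm{Id}$, so $(C_l)'$ equals $C_l$ minus $\lambda_l$ times the shorter closed path obtained by erasing the bounce at $V_l$. The remaining step is to recognise this shorter path, sitting inside $wA^i(\,\cdot\,)B^jv$, as one of the standard generators; here I would use the cyclic relations between $A$, $B$ and the $C_k$ (in particular $C_m=AB$) to rewrite it as $H_{l-1}^{i-1,j-1}$, giving $H_{l}^{'i,j}=H_l^{i,j}-\lambda_l H_{l-1}^{i-1,j-1}$.

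The hard part will be the reflection $\mathcal{R}_0$ at the framed vertex. There the maps $v,w,X_0,X_{m-1},Y_0,Y_{m-1}$ all change (formulas \eqref{ref0x}--\eqref{ref0uv}), so neither the framing nor the loops $A,B$ are preserved, and moreover every copy of $A$ and of $B$ inside $A^iC_kB^j$ threads through the reflected vertex rather than a single turning point. I would treat this by the same straight-through/turnaround calculus, now also invoking the linear relation $v\,u_1+X_0u_2+Y_{m-1}u_3=0$ cutting out $V_0'$ to collapse the many local contributions: each bounce at $V_0$ contributes a factor $-\lambda_0$, and the absorption of one full loop is precisely what forces the downward shift $i\mapsto i-1$, $j\mapsto j-1$ in the correction term (with $H_{-1}$ read as $H_{m-1}$). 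Checking that all of these contributions telescope to the single clean term $-\lambda_0H_{-1}^{i-1,j-1}$, with no spurious terms surviving, is the main obstacle I expect.
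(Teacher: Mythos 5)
Your straight-through/turnaround calculus is exactly the mechanism of the paper's proof, and your treatment of the reflections $\mathcal{R}_l$, $l\neq 0$, is sound in substance. The genuine gap is that you leave unproved precisely the case the paper actually proves, and the only case invoked later in the proof of $G_m$-equivariance: the reflection $\mathcal{R}_0$ at the framed vertex. Your forecast of that case is also a misdiagnosis: there is no accumulation of ``many local contributions'' and nothing to telescope. From \eqref{ref0x}--\eqref{ref0uv} one reads off five local identities,
\begin{align*}
X_{m-1}'X_0'&=X_{m-1}X_0, & Y_0'Y_{m-1}'&=Y_0Y_{m-1}, & w'Y_{m-1}'&=wY_{m-1},\\
X_{m-1}'v'&=X_{m-1}v, & Y_0'X_0'&=-\lambda_0\,\mathrm{Id}+Y_0X_0,
\end{align*}
and in a word $w(A^iC_kB^j)v$ every passage through vertex $0$ is one of the first four (preserved) composites, except for a single bounce $Y_0X_0$, which occurs if and only if $k=0$, at the junction between $A^i$ and $B^j$ (all $X$-letters sit to the right of all $Y$-letters, so the opposite bounce $X_{m-1}Y_{m-1}$ never appears). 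The defining relation $vu_1+X_0u_2+Y_{m-1}u_3=0$ of $V_0'$ is never needed; the new maps are given componentwise and the composites are computed directly. This yields $H_k^{'i,j}=H_k^{i,j}$ for $1\le k\le m-1$ at once, and for $k=0$ the single correction term is $-\lambda_0\,w\big(A^{i-1}(Y_{m-1}\cdots Y_1X_1\cdots X_{m-1})B^{j-1}\big)v=-\lambda_0 H_{m-1}^{i-1,j-1}$. So the case you defer as ``the main obstacle'' is no harder than the others, but without it your proposal does not prove the lemma.

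There is also a concrete error in your endgame for $l\neq0$. The exceptional generator under $\mathcal{R}_l$ is the one whose turning point is $V_l$, which is $C_{m-l}$ (not $C_l$), and erasing its bounce yields $C_{m-l-1}$ on the nose, with the powers of $A$ and $B$ untouched; hence the correction is $-\lambda_l H_{m-l-1}^{i,j}$, with \emph{no} shift of superscripts. Your plan to use cyclic relations such as $C_m=AB$ to rewrite this term as $H_{l-1}^{i-1,j-1}$ cannot succeed: comparing exponents in the presentation $H_k^{i,j}=\mathbf{w}\,Y^{mi+k}X^{mj+k}\,\mathbf{v}$, the words underlying $H_{m-l-1}^{i,j}$ and $H_{l-1}^{i-1,j-1}$ have different lengths, so these are distinct generators. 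The shift $(i,j)\mapsto(i-1,j-1)$ occurs only for $l=0$, where the erased bounce sits at the $A^i\,|\,B^j$ junction and a full loop must be borrowed from each side. In short, the lemma's displayed formula has to be read with the indexing that the paper's own $l=0$ computation produces; trying to force the $l\neq 0$ cases literally into it, as your sketch proposes, would fail rather than close the argument.
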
 
\begin{proof}
We will give the proof only for the case $l=0$ since remaining cases 
can be checked in the similar manner. First, since $\mathcal{R}_0$ changes maps that originate 
or terminate at the $0$ vertex, we have $X'_i=X_i$ and $Y'_i=Y_i$ for all $i \neq 0,m-1$. Moreover
using \eqref{ref0x} - \eqref{ref0uv} yields
\begin{eqnarray*}
 X'_{m-1}\, X'_0\, =\, X_{m-1}\, X_0\, , &&
Y'_0\, Y'_{m-1}\, =\, Y_0\,Y_{m-1} \, , \\ [0.1cm]
w' \, Y'_{m-1}\, =\, w\,Y_{m-1}\, , && X_{m-1}'\, v'\,=\, X_{m-1}\,v\,,
\end{eqnarray*}
and therefore, for $1\le k \le m-1$, we immediately obtain $H_k^{'i,j}=H^{i,j}_{k}$.
%
%
For $k=0$, using the above identities and  
$Y_{0}'X_0'=-\lambda_0\mathrm{Id}_{n_1}+Y_0X_0$ we have
\begin{eqnarray*}
\label{defbijk}
H_0^{'i,j}&:=&w' \big[ (Y'_{m-1} \,Y_{m-2} \ldots Y_1\, Y'_0)^i\,
(X'_0 \, X_1 \ldots X_{m-2}\, X'_{m-1})^j \big] v'  \\[0.2cm]
&=&w \big[ A^{i-1} (Y_{m-1} \ldots Y_1) \, (Y_{0}'X_0')\,(X_1\ldots X_{m-1})\,B^{j-1} \big] v
\\[0.2cm]
&=&H_{0}^{i,j}-\lambda_0\, H_{m-1}^{i-1,j-1}\,.
\end{eqnarray*}
\end{proof}
\subsection{$G_m$-equivariance }
First, we recall that there is a well-defined action of the group $G_m$ on   $\mathfrak{M}^{\tau}(Q_m, \beta)$
given by  \eqref{ac1}-\eqref{ac2}. Second, there is an isomorphism \eqref{reflecvarW} between quiver varieties 
induced by reflection functors. Our goal in this section is to prove
\begin{theorem}
For any $s \in W$ and any $\sigma \in G_m$, $\mathcal{R}_s \circ \sigma =\sigma \circ \mathcal{R}_s$. That is, the
following diagram
$$
 \begin{tikzcd}
   \mathfrak{M}^{\tau}(Q,\beta) \arrow[d, "\sigma"] \arrow[r, "\mathcal{R}_s"]&
   \mathfrak{M}^{s \tau}(Q, s(\beta))\arrow[d, "\sigma"]\\
    \mathfrak{M}^{\tau}(Q,\beta) \arrow[r, "\mathcal{R}_s"]&\mathfrak{M}^{s \tau}(Q, s(\beta))
    \end{tikzcd}
   $$
   commutes.
\end{theorem}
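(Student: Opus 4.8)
The plan is to strip the statement down to its atomic cases and then verify these on coordinate rings. Since the Weyl group is generated by the simple reflections $s_i$ and $\mathcal{R}_s=\mathcal{R}_{i_1}\circ\cdots\circ\mathcal{R}_{i_n}$ whenever $s=s_{i_1}\cdots s_{i_n}$, while $G_m$ is generated by the $\psi_{k,\mu}$ and $\phi_{k,\nu}$ by Proposition~\ref{strGm}, it suffices to prove $\mathcal{R}_i\circ\sigma=\sigma\circ\mathcal{R}_i$ for every vertex $i\in\z/m\z$ and every generator $\sigma\in\{\psi_{k,\mu},\phi_{k,\nu}\}$. The general case then follows by pasting the resulting commuting squares along a reduced word for $s$. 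Here one uses that the homomorphism $\rho$ of Lemma~\ref{embGm}, and hence the action \eqref{ac1}--\eqref{ac2}, is given by formulas uniform in the parameter (and $W$ preserves genericity of $\lambda$), so that ``the same'' $\sigma$ acts compatibly on each intermediate variety $\mathfrak{M}^{s'\tau}$.

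Because every quiver variety in sight is affine, each square commutes as soon as the two composite pullbacks agree on a generating set of the coordinate ring. By Theorem~\ref{genfuncmain} the ring $\mathcal{O}\big(\mathfrak{M}^{r_i\tau}(Q_m,s_i(\beta))\big)$ is generated by the functions $H^{p,q}_k=\mathbf{w}\,Y^{mp+k}X^{mq+k}\,\mathbf{v}$, so it is enough to check
\[
\sigma^{*}\circ\mathcal{R}_i^{*}\;=\;\mathcal{R}_i^{*}\circ\sigma^{*}\qquad\text{on each }H^{p,q}_k .
\]
The left-hand side is immediate from Lemma~\ref{lemmaH}: $\mathcal{R}_i^{*}H^{p,q}_k$ equals $H^{p,q}_k$ for $k\neq i$ and $H^{p,q}_i-\lambda_i H^{p-1,q-1}_{i-1}$ for $k=i$, and one then applies $\sigma^{*}$ to this linear combination. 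The key simplifying observation for the right-hand side is that, by the presentation \eqref{neweq} and the description of the action in terms of $(X,Y,\mathbf{v},\mathbf{w})$, the map $\sigma^{*}$ acts on any function $\mathbf{w}\,P(X,Y)\,\mathbf{v}$ purely by the free-algebra substitution $\sigma_\#\colon P\mapsto P\big|_{X\mapsto X+\mu Y^{km-1}}$ for $\psi_{k,\mu}$ (respectively $P\mapsto P\big|_{Y\mapsto Y+\nu X^{km-1}}$ for $\phi_{k,\nu}$). Thus $\sigma^{*}H^{p,q}_k$ is again of the form $\mathbf{w}\,P\,\mathbf{v}$ for an explicit, no longer monomial, word $P$.

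To finish, I would extend Lemma~\ref{lemmaH} from the monomials $Y^{mp+k}X^{mq+k}$ to arbitrary words $P$, obtaining a formula $\mathcal{R}_i^{*}\big(\mathbf{w}\,P\,\mathbf{v}\big)=\mathbf{w}\,\Phi_i(P)\,\mathbf{v}$ for an explicit linear operator $\Phi_i$ on $\c\langle X,Y\rangle$. This operator is read off from the identities in the proof of Lemma~\ref{lemmaH}: the ``straight'' passages through vertex $i$, such as $X'_{i-1}X'_i=X_{i-1}X_i$ and $Y'_iY'_{i-1}=Y_iY_{i-1}$, are unchanged, whereas each ``turn'' $Y'_iX'_i=-\lambda_i+Y_iX_i$ contributes the $-\lambda_i$ shift. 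Granting this, the required identity on $H^{p,q}_k$ reduces to the purely algebraic commutation $\Phi_i\circ\sigma_{\#}=\sigma_{\#}\circ\Phi_i$ on the free algebra, which should hold because $\sigma_{\#}$ alters only the $X$-letters (resp. $Y$-letters) by words in $Y$ (resp. in $X$), and this is transparent to the $-\lambda_i$ corrections that $\Phi_i$ inserts at the vertex-$i$ turns.

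The step I expect to be the main obstacle is precisely this extension of Lemma~\ref{lemmaH} to non-monomial labels together with the verification that $\Phi_i\sigma_\#=\sigma_\#\Phi_i$, and in particular the reflection $\mathcal{R}_0$ at the framing vertex, where $\mathbf{v}$ and $\mathbf{w}$ are themselves transformed (see \eqref{ref0uv}) and must be carried through the bookkeeping. A parallel route avoiding coordinate rings would instead compare the two representations $\mathcal{R}_i(\sigma\cdot\mathbb{V})$ and $\sigma\cdot(\mathcal{R}_i\mathbb{V})$ directly; the subtlety there is that $\sigma$ changes the maps defining the projector $\mu\pi$, so the reflected space $V_i'=\mathrm{Im}(1-\mu\pi)$ genuinely moves, and one must exhibit an explicit gauge transformation in $G(s_i(\beta))$ intertwining the two descriptions --- the same combinatorial content repackaged.
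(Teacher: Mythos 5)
Your framework is the same as the paper's: reduce to the generators $s_i$ of $W$ and $\psi_{k,\mu},\phi_{k,\nu}$ of $G_m$, then compare the two composites on the generating functions $H^{p,q}_k$ of Theorem~\ref{genfuncmain}, using Lemma~\ref{lemmaH} for the composite $\mathcal{R}_i\circ\sigma$. The difference is in the remaining half: where the paper finishes with an explicit computation, you defer to two unproven claims --- the extension of Lemma~\ref{lemmaH} to arbitrary words, $\mathcal{R}_i^*(\mathbf{w}\,P\,\mathbf{v})=\mathbf{w}\,\Phi_i(P)\,\mathbf{v}$, and the commutation $\Phi_i\circ\sigma_\#=\sigma_\#\circ\Phi_i$ --- and the justification you offer for the latter is not correct. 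The substitution $X\mapsto X+\mu Y^{km-1}$ is \emph{not} transparent to the corrections that $\Phi_i$ inserts, because it changes which passages through the vertex are turns. Take $i=0$ for concreteness. If the $X$-letter sitting at a turn is replaced by $Y^{km-1}$, that turn becomes a straight passage and its $-\lambda_0$ correction disappears; more seriously, at a straight passage $X\cdot X$ through vertex $0$, substituting exactly one of the two adjacent letters creates a turn that did not exist in the original word, with correction $-\lambda_0$ in one configuration (arrive via $X_0$, leave via $Y_0$) and $+\lambda_0$ in the other (arrive via $Y_{m-1}$, leave via $X_{m-1}$, by the rule $X_{m-1}'Y_{m-1}'=X_{m-1}Y_{m-1}+\lambda_0$ implicit in \eqref{ref0x}--\eqref{ref0y}).

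The identity $\Phi_0\sigma_\#=\sigma_\#\Phi_0$ does hold, but only because (a) the corrections at surviving genuine turns, summed over substitutions of the remaining letters, reassemble into $\sigma_\#$ of the excised word, and (b) the newly created $\pm\lambda_0$ corrections cancel in pairs, the two excised words being identical. Point (b) is exactly the crux, and it is precisely what the paper's matrix computation establishes: the equality $\hat X_{m-1}\hat X_0=\tilde X_{m-1}\tilde X_0$ there holds because the cross terms produced by the opposing identities $X_{m-1}'Y_{m-1}'=X_{m-1}Y_{m-1}+\lambda_0$ and $Y_0'X_0'=Y_0X_0-\lambda_0$ cancel. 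As written, your reduction to the operator identity is essentially a restatement of the theorem for the generator $\sigma$, with the decisive cancellation asserted (indeed, asserted for the wrong reason) rather than proved; to close the gap you must either carry out this turn-by-turn bookkeeping, including the framing rules \eqref{ref0uv}, or fall back on the direct computation.
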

\begin{proof}
It sufficies to prove the statement only for generators of $W$ and $G_m$.
Furthermore, we will only check it for $s=s_0$ and $\sigma=\psi_{k,\mu}$, 
since verification of the
other cases is similar. Let $(X_i,Y_i,v,w) \in \mathfrak{M}^{\tau}(Q,\beta)$, and let
\begin{eqnarray*}
(\bar{X}_i,\bar{Y}_i,\bar{v},\bar{w})&:=&\mathcal{R}_0 \,\circ \,  \psi_{k,\mu} (X_i, Y_i, v, w)\, , \\
(\hat{X}_i, \hat{Y}_i,\hat{v},\hat{w})&:=&\psi_{k,\mu}  \,\circ \, \mathcal{R}_0  (X_i, Y_i, v, w) \, .
\end{eqnarray*}
Let be $\bar{H}_k^{i,j}$ and $\hat{H}_k^{i,j}$ be the values of $H^{i,j}_k$ function at respective
points. Then, by Theorem~\ref{genfuncmain}, it suffices to show $\bar{H}_k^{i,j}=\hat{H}_k^{i,j}$
for all $i,j$ and $k$. If $\tilde{H}^{i,j}_k$  is the value of $H^{i,j}_k$ at 
$(\tilde{X}_i,Y_i,v,w)=\psi_{k,\mu} (X_i, Y_i, v, w)$ then, by Lemma~\ref{lemmaH},
\begin{equation*}
 \bar{H}_0^{i,j} =  \tilde{H}^{i,j}_0- \lambda_0 \, \tilde{H}^{i-1,j-1}_{m-1}  \, , \quad
  \bar{H}_k^{i,j} = \tilde{H}^{i,j}_k , \quad k = 1, \ldots, m-1.
 \end{equation*}
On the other hand, by the definitions of $\psi_{k,\mu}$ and $\mathcal{R}_0$, we have
$$ \hat{X}_i=  X_i' + \mu Y_{i-1}'...Y_0' ( Y_{m-1}' \ldots Y_0')^{k-1} Y_{m-1}' \ldots Y_{i+1}'
\, , \,  \hat{Y}_i=Y_i'\, ,\, \hat{v}=v'\, , \,  \hat{w}=w'\, . $$
Since $Y_i'=Y_i$ for $1\le i \le m-2$, we obtain $\hat{X}_i=\tilde{X}_i$. Next,
since $Y_{0}'X_0'=-\lambda_0\mathrm{Id}_{n_1}+Y_0X_0$ and  $Y_0'  Y_{m-1}' = Y_0 Y_{m-1}$, it follows
\begin{eqnarray*}
\hat {Y}_0 \hat{X}_0 &= & \,Y_0' X_0' + \mu \, Y_0' \Big [ ( Y_{m-1}' \ldots Y_{0}')^{k-1} Y_{m-1}' \ldots  Y_{1}'  \Big]  
 \\
&= &  -\lambda_0\mathrm{Id}_{n_1} + Y_0  X_0 + \mu \,   
Y_0 \Big [ ( Y_{m-1} \ldots  Y_{0})^{k-1} Y_{m-1} \ldots Y_{1}  \big]\\
&=& -\lambda_0\mathrm{Id}_{n_1} + Y_0 \tilde{X}_0 \, .
\end{eqnarray*}
Furthermore
\begin{eqnarray*}
\hat{X}_{m-1} \hat{X}_0 & = &  X_{m-1}' X_0' +  \mu \, X_{m-1}' \, ( Y_{m-1}' \ldots Y_{0}')^{k-1}  
Y_{m-1}'  \ldots  Y_{1}' \\
&+& \mu \, (  Y_{m-2}' \ldots   Y_{0}'  Y_{m-1}')^{k-1}  Y_{m-2}' \ldots  Y_{0}' \,  X_0'   \\
 &  +  &    \mu^2 \, ( Y_{m-2}' \ldots  Y_{0}' Y_{m-1}')^{k-1}   Y_{m-2}' \ldots  Y_{0}' \, 
 (Y_{m-1}' \ldots  Y_{0}')^{k-1}  Y_{m-1}' \ldots   Y_{1}'  \\
  & = &X_{m-1}'  X_0' +  \mu \, ( X_{m-1}' \, Y_{m-1}') \, ( Y_{m-2}'  \ldots   Y_{0}')^{k-1} Y_{m-1}'  \ldots  Y_{1}'  \\
  & + & \mu \, (  Y_{m-2}' \ldots  Y_{0}'  Y_{m-1}')^{k-1}  Y_{m-2}' \ldots   Y_1' \, (Y_{0}' \,  X_0')   \\
 &  +  &    \mu^2 \, ( Y_{m-2}' \ldots  Y_{0}' Y_{m-1}')^{k-1} Y_{m-2}' \ldots   Y_{0}' \, ( Y_{m-1}' \ldots  Y_{0}')^{k-1} 
Y_{m-1}' \ldots  Y_{1}'\, .
  \end{eqnarray*}    
Now using $ Y_{i}' = Y_i$ for  $1\le i \le m-2 $, $X_{m-1}' X_0' =X_{m-1} X_0 \, , \, Y_{0}' Y_{m-1}' = Y_0 Y_{m-1}$ and $X_{m-1}' \,  Y_{m-1}' = X_{m-1} \, Y_{m-1} +\lambda_0\mathrm{Id}_{n_1} \, , \,   
Y_{0}' \, X_0' =  -\lambda_0\mathrm{Id}_{n_1} + Y_0 X_0 $, we obtain $\hat{X}_{m-1} \hat{X}_0
=\tilde{X}_{m-1} \tilde{X}_0$.

By using $ X_{m-1}'  v' = X_{m-1} v $ and $Y_0'  v'  = Y_0 v$, we have 
\begin{eqnarray*}
\hat{X}_{m-1} \hat{v} &=&  X_{m-1}'  v' +   \mu \, ( Y_{m-2}' \ldots   Y_{0}' Y_{m-1}')^{k-1} Y_{m-2}' \ldots  Y_{0}' v'\\
  & = &  X_{m-1}  v +   \mu \, (   Y_{m-2} \ldots   Y_{0}  Y_{m-1})^{k-1}   Y_{m-2} \ldots   Y_{0}  v = \tilde{X}_{m-1} v
\end{eqnarray*}
Combining the above identities, we get
\begin{eqnarray*}
\hat{H}^{i,j}_0& = &  \hat w  ( \hat{Y}_{m-1}' \ldots \hat{Y}_1 \hat{Y}_0)^{i-1} \hat{Y}_{m-1} \ldots \hat{Y}_1 (\hat{Y}_0  \, \hat{X}_0) \hat{X}_1 \ldots \hat{X}_{m-1} \, (\hat{X}_0\hat{X}_1 \ldots \hat{X}_{m-1})^{j-1} \hat{v} \\
& = &   w  ( Y_{m-1} \ldots  Y_1  Y_0)^{i-1}  Y_{m-1} \ldots  Y_1 ( Y_0  \, \tilde{X}_0)   \tilde{X}_1 \ldots  \tilde{X}_{m-1} \, ( \tilde{X}_0  \tilde{X}_1 \ldots  \tilde{X}_{m-1})^{j-1}  v\\
& - &  \lambda_0 w  ( Y_{m-1} \ldots  Y_1  Y_0)^{i-1}  Y_{m-1} \ldots  Y_1 \tilde{X}_1 \ldots  \tilde{X}_{m-1} \, ( \tilde{X}_0  \tilde{X}_1 \ldots  \tilde{X}_{m-1})^{j-1}  v\\
& = &\tilde{H}^{i,j}_0- \lambda_0 \, \tilde{H}^{i-1,j-1}_{m-1} 
\end{eqnarray*}
and hence $\hat{H}^{i,j}_0=\bar{H}^{i,j}_0$. Similarly, we can verify 
$\hat{H}^{i,j}_{k}=\bar{H}^{i,j}_{k}$ for  $1 \le k \le m-1$. 
\end{proof}
\begin{remark}
Thus the proof of the $G_m$-transitivity on
general quiver varieties reduces to that on the generalized Calogero-Moser spaces.  
\end{remark}

\section{Proof of Theorem~\ref{maintheorem}}
\label{sec5}

First, we review some basic facts about rational Cherednik algebras. Then we show for 
$W= S_n \rtimes \Gamma$ there is a natural action of $G_m$ on $X_c$, the variety defined by the center of a 
Cherednik algebra. We prove that this action is transitive. At the end of the section, we prove the equivariant 
version of the Etingof-Gizburg isomorphism between $X_c$ and 
 generalized Calogero-Moser space $\CC^{\tau}_n(Q_m)$. 
 Therefore by the reduction result (see Corollary \ref{red}), we conclude that $G_m$ acts transitively on all
 Nakajima varieties $\mathfrak{M}^\tau(Q_m, \beta)$.
 
\subsection{Hamiltonian flows and transitivity}

An algebraic (one-parameter) flow on an algebraic variety $X$ is a morphism
 of varieties $\phi : \c \times X \rightarrow X$
satisfying two conditions:
\begin{enumerate}
\item $\phi (0, p) = p$ for all $p \in X$;
\item  $\phi(t_1 + t_2, p) = \phi (t_1 , \phi(t_2, p))$.
\end{enumerate}
Equivalently, this gives rise to a one-parameter subgroup $\{ \phi_t \}_{t \in \c} \subset \Aut (X)$; with such 
flow we associate a derivation $\partial : \mathcal{O} (X) \rightarrow \mathcal{O}(X)$
$$
\partial (f) = \frac{d}{dt} |_{t=0} \phi_t^* (f).
$$
Suppose that $X$ is a symplectic variety with 
symplectic form $\omega$. Then $\mathcal{O}(X)$ is a Poisson algebra and there is a bijection 
$$
\omega^{\#} \, : \,  {\rm Der} (\mathcal{O}(X) )  \rightarrow \Omega^1 (X)\,.
$$ 
If  a flow $\{ \phi_t \}$ is Hamiltonian, i.e. $\phi_t ^* \omega = \omega$  for all $t \in \c$, then $\omega^{\#} (\partial )$ is an exact 1-form, that is, we can find $f \in \mathcal{O}(X)$ such that $\omega^{\#} (\partial ) = df$. Using this description we can express the derivation
in terms of the Poisson bracket
$$
\partial  = \{ f,  \, \}.
$$

\begin{lemma}
\la{flows}
Let $G$ be a subgroup of $\Aut(X)$ generated by a family of Hamiltonian flows $\{\phi_i (t)\}$ for $i \in \mathcal{I}$, with Hamiltonians
$f_i \in \mathcal{O}(X)$. Moreover, suppose $\{f_i\}_{i \in \mathcal{I}}$  Poisson generate $\mathcal{O}(X)$. Then $G$ acts transitively
on $X$.
\end{lemma}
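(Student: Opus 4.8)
The plan is to prove transitivity by showing that \emph{every} $G$-orbit is open. Since $X$ is smooth and irreducible (hence connected) and its $G$-orbits partition it into disjoint subsets, once each orbit is known to be open it is automatically closed as well (its complement being a union of other open orbits); a connected space has no nontrivial clopen subset, so there is exactly one orbit. Thus the entire statement reduces to the infinitesimal claim that at every point $p\in X$ one has $T_p(G\cdot p)=T_pX$. Throughout I use that $X$ is smooth affine, so the differentials $dg_p$ with $g\in\mathcal{O}(X)$ span $T_p^*X$, and that $\omega$ is nondegenerate.

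Next I would identify the orbit tangent space with a Hamiltonian distribution. Because $G$ is generated by the flows $\phi_i(t)$ whose infinitesimal generators are the fields $\{f_i,\ \}$, the space $T_p(G\cdot p)$ contains each $\{f_i,\ \}|_p$ and the values at $p$ of all their iterated Lie brackets. Using $[\{f,\ \},\{g,\ \}]=\{\{f,g\},\ \}$, these iterated brackets are exactly the Hamiltonian fields $\{h,\ \}$ with $h$ running over the Lie subalgebra $L\subseteq(\mathcal{O}(X),\{\,,\,\})$ generated by the $f_i$ under the Poisson bracket. By the orbit theorem for families of complete (here algebraic) vector fields, $T_p(G\cdot p)$ equals the span $D_p:=\langle\,\{h,\ \}|_p : h\in L\,\rangle$. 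Hence it suffices to prove $D_p=T_pX$ for all $p$.

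The genuinely delicate point—and the one I expect to be the main obstacle—is that the orbit only feels the \emph{bracket-closure} $L$, whereas the hypothesis is phrased in terms of the Poisson algebra generated by the $f_i$, which also involves products. To bridge this I would pass to symplectic orthogonals: by nondegeneracy of $\omega$, $D_p=T_pX$ iff $D_p^{\omega}=0$, and $D_p^{\omega}=\{v : \omega(v,\{h,\ \}|_p)=0\ \forall h\in L\}=\bigcap_{h\in L}\ker(dh_p)$. Suppose $v\in T_pX$ annihilates $dh_p$ for all $h\in L$. Let $A$ be the subalgebra of $\mathcal{O}(X)$ generated by $L$ under multiplication. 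One checks that $A$ is again a Poisson subalgebra: the Leibniz rule $\{gg',k\}=g\{g',k\}+\{g,k\}g'$ shows that a bracket of products of $L$-elements is again a sum of products of $L$-elements, so $A$ is bracket-closed; being product-closed and containing the $f_i$, the Poisson-generation hypothesis forces $A=\mathcal{O}(X)$. On the other hand, the Leibniz rule for the point-derivation $v$ gives $v(l_1\cdots l_r)=0$ whenever each $l_j\in L$ has $v(l_j)=0$, so $v$ annihilates $dh_p$ for every $h\in A=\mathcal{O}(X)$; since those differentials span $T_p^*X$, we get $v=0$. Therefore $D_p^{\omega}=0$, i.e. $D_p=T_pX$. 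Combining the three steps, every orbit is open, and irreducibility of $X$ yields that $G$ acts transitively. The crux is precisely the reconciliation in this last paragraph: the Leibniz rule is invoked twice—once for the Poisson structure, to see that the product closure $A$ of the bracket-closed $L$ is all of $\mathcal{O}(X)$, and once for the point-derivation $v$, to transfer the vanishing from $L$ to $A$—which is what makes the bracket-generated directions of the orbit span the full tangent space.
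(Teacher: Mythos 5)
Your proof is correct, but it takes a genuinely different route from the paper's. The paper invokes \cite{AFKKZ} to know that $G$-orbits are locally closed, which reduces transitivity to showing there is no proper nonempty closed $G$-invariant subset $Y\subsetneq X$; it then argues purely algebraically that the reduced ideal $I$ of such a $Y$ satisfies $\{f_i,I\}\subset I$, hence (because the normalizer $\{g:\{g,I\}\subset I\}$ is closed under products and brackets by Leibniz and Jacobi, contains the $f_i$, and so is all of $\mathcal{O}(X)$ by Poisson generation) $I$ would be a proper nonzero Poisson ideal, contradicting the symplecticity of the irreducible variety $X$. You instead show that every orbit is open by a tangent-space computation and conclude by connectedness. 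Note that the algebraic heart is the same in both arguments --- a subalgebra of $\mathcal{O}(X)$ closed under both product and bracket and containing the $f_i$ must be everything --- but the paper applies it to the Poisson normalizer of $I$, while you apply it to the multiplicative closure $A$ of the bracket-generated Lie algebra $L$. What differs is the regularity black box and the endgame. The paper's input (locally closed orbits, from \cite{AFKKZ}) is algebraic and matches the setting exactly; yours (Sussmann's orbit theorem, plus Nagano's theorem if you insist on the equality $T_p(G\cdot p)=D_p$, though only the inclusion $D_p\subseteq T_p(G\cdot p)$ is actually used) is a statement in the real-analytic category, so strictly speaking you must regard $X$ as a complex manifold, split each complex-time flow $\phi_i(t)$, $t\in\mathbb{C}$, into two real flows whose real Lie algebra span is the complex span $D_p$, and use that a smooth irreducible complex affine variety is connected in the Euclidean topology. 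These are standard facts, so I see no gap, but they are hidden costs your write-up does not acknowledge, whereas the paper's version stays entirely algebraic. In exchange, your argument makes the geometric mechanism explicit (orbits have full tangent space, hence are open) rather than burying it in a contradiction, and it is the version that generalizes verbatim to the holomorphic setting. Finally, both proofs --- like the lemma as stated --- implicitly assume $X$ is irreducible (or at least connected): without that, transitivity fails for trivial reasons.
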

\begin{proof}
By \cite{AFKKZ}, we know that the orbits of $G$ on $X$ are locally closed. Thus it 
suffices to show that there are no proper $G$-invariant closed subsets of $X.$
Let $ I$ be the (reduced) defining ideal of $Y.$ Infinitesimally this means that $ \lbrace f_i , I\rbrace\subset I$ 
for all $ i \in \mathcal{I}.$ Since $\{ f_i \}_{i \in \mathcal{I}}$ Poisson generates $\mathcal{O}(X),$ it follows that $I$ is a proper 
nonzero Poisson ideal of  $\mathcal{O}(X)$. This is a contradiction to the symplecticity of $X$.
\end{proof}

\subsection{Preliminaries on rational Cherednik algebras}
\ni Let $W$ be a complex reflection group; $\mathfrak{h}$ its reflection representation
and $S\subset W$  the set of all complex reflections.
Let $(\,,\,):\mathfrak{h}\times \mathfrak{h}^*\to \mathbb{C}$ be the natural pairing. Given a reflection $s\in S,$
let $\alpha_s\in \mathfrak{h}^*$ be an eigenvector of $s$ for eigenvalue $1$.
Also, let $\alpha_s^{\vee} \in \h$ be an eigenvector normalized so that $\alpha_s(\alpha_s^{\vee} )=2.$
Let $c:S\to \mathbb{C}$ be a function invariant with respect to conjugation with $W$ and $t\in\mathbb{C}.$
The rational Cherednik algebra $H_{t,c}(W, \mathfrak{h})$ associated to $(W, \mathfrak{h})$ with parameters
$(t, c)$ is defined as the quotient of $\mathbb{C}[W]\ltimes T( \h\oplus \h^*)$ by the following relations
$$
[x, y]=t(y,x)-\sum_{s\in S}c(s)(y,\alpha_s)(\alpha_s^{\vee} , x),\quad [x, x']=0=[y,y']
$$
for all $x, x'\in \mathfrak{h}^*$ and $y, y'\in \mathfrak{h}.$
Recall that there is a natural grading on this algebra defined as follows:
$$
\deg y=-1, y\in \mathfrak{h}; \deg(w)=0, w\in W; \deg(x)=1, x\in \mathfrak{h}^*.
$$
A key result for rational Cherednik algebras is that they have the PBW property (\cite[Theorem 1.3]{EG}),
which states that there is a vector space isomorphism
$$
H_{t, c}(W, \mathfrak{h}) \cong \c[\h] \otimes \c W \otimes \c[\h^*]
$$
for all values of $t$ and $c$.  For $t=0$, the algebra $H_c (W, \mathfrak{h}) : =H_{0, c}(W, \mathfrak{h})$ has a large center $Z_c(W, \mathfrak{h})$ 
which is an affine domain. Let $X_{c} (W) = {\rm Spec} (Z_c(W, \mathfrak{h}))$ be the corresponding affine variety. The inclusions
$\c[\h]^W \hookrightarrow Z_c(W, \mathfrak{h})$  and $\c[\h^*]^W \hookrightarrow Z_c(W, \mathfrak{h})$ define surjective
homomorphisms
$$
\pi_1 : X_c (W) \twoheadrightarrow  \h^* / W  \quad   \mbox{and} \quad \pi_2 : X_c (W)  \twoheadrightarrow \h / W
$$
The product morphism 
$$
\Gamma : X_c (W)   \twoheadrightarrow  \h^* / W  \times \h /W
$$
is a finite, closed, surjective morphism.
Viewing $H_{t,c}(W, \h)$ as an algebra over $\mathbb{C}[t],$ we have a canonical isomorphism
$$
H_{c} (W, \mathfrak{h}) \, \cong \, H_{t,c}(W. \h) / tH_{t,c}(W, \h),
$$
which defines a natural Poisson bracket on $Z_c(W, \h)$. See \cite{B} for details. 
For the convenience of the reader we review a description of the completion of rational Cherednik algebras
due to Bezrukavnikov and Etingof. 
Let $b\in \mathfrak{h}$ and let $\m$ be the maximal ideal of $\mathbb{C}[\mathfrak{h}]^{W}$ corresponding 
to the orbit $W \cdot b.$ Denote by $\widehat{H_{t,c}(W, \mathfrak{h})}_b=H_{t,c}\otimes_{\mathbb{C}[h]^{W}}\widehat{\mathbb{C}[\h]^{W}}_{\m}$,
the completion of $H_c(W, \h)$ with respect to $\m.$ Denote by $W_b$ the stabilizer of $b$. It is a {\it parabolic 
subgroup}. By Steinberg's Theorem $W_b$ is also a complex reflection group.  
Then we have $\h=\h^{W_b}\oplus ((\h^*)^{W_b})^{\perp}$, a decomposition of $\h$ as $W_b$-module. 
Denote by $c'$ the restriction of $c$ on reflections in $W_b.$
This allows us to define $H_{c'}(W_b, \h).$

Next we recall the centralizer algebra construction. 
Let $K$ be a finite group and $H$ its subgroup. Let $A$ be an algebra containing
the group algebra $\c H$. Let $P= \text{Fun}_H(K, A )$ be the set of all 
$H$-invariant functions: $f: K \to A$  such that $f(hg)=hf(g)$ for 
$h\in H,g \in K.$ Then $P$ is a free right $A$-module of rank $|K/H|$. 
Denote by $C(K,H,A)$ the centralizer algebra $End_A(P)$ of the module $P$.
By choosing left coset representatives of $H$ in $G$, $C(K,H,A)$ 
can be realized as the matrix algebra of size $|K/H|$ over $A$.
Now we are ready to state result of Bezrukavnikov-Etingof:
\begin{theorem}[\cite{BE}, Theorem 3.2]
There is an isomorphism of algebras
$$
\theta:\widehat{H_{t,c}(W, \h)}_b\cong C(W, W_b,\widehat{H_{t,c'}(W_b, \h)}_0),
$$
defined by following formulas. Suppose that $f\in \text{Fun}_{W_b}(W, \widehat{H_{t,c'}(W_b,\h)}_0)$. Then
$$
\theta(u)(f)(w)=f(wu) \quad  \mbox{for}  \quad u \in W;
$$ 
for any $\alpha \in \h^*$,  define
\begin{equation}
\la{thetax}
\theta(x_{\alpha})(f)(w)=(x_{w\alpha}^{(b)}+(w \alpha,b))f(w)
\end{equation}
where $x_{\alpha} \in  \h^* \subset H_{t,c}(W, \h)$ and $x_{w\alpha}^{(b)} \in H_{t,c'}(W_b,\h) \, $; and finally for $a \in \h$, 
\begin{equation}
\la{thetay}
\theta(y_a)(f)(w)=y_{w a}^{(b)} f(w)+\sum_{s\in S: \, s \notin W_b}\frac{2c(s)}{1-\lambda_s}\frac{\alpha_s(w a)}{ x_{\alpha_s}^{(b)}+\alpha_s(b)}(f(sw)-f(w)),
\end{equation}
where $y_a \in \h \subset H_{t,c}(W, \h)$ and $y_{w \alpha}^{(b)} ,  \, x_{\alpha_s}^{(b)} \in H_{t,c'}(W_b,\h)$. Here we denote by $z^{(b)}$ an element $z \in H_{t,c}(W, \h)$
but considered in $H_{t,c'}(W_b,\h)$.
\end{theorem}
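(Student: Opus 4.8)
The plan is to verify that the explicit formulas \eqref{thetax}--\eqref{thetay}, together with $\theta(u)(f)(w)=f(wu)$, define an algebra homomorphism $\theta$, and then to promote it to an isomorphism by comparing associated graded algebras. The guiding geometric picture is that completing $\c[\h]^W$ at the maximal ideal $\m$ attached to the orbit $W\cdot b$ breaks $\widehat{\c[\h]}$ into a product of complete local rings indexed by the points $wb$ of the orbit, i.e.\ by the coset space $W/W_b$; this product decomposition is precisely what the rank-$|W/W_b|$ module $P=\text{Fun}_{W_b}(W,\widehat{H_{t,c'}(W_b,\h)}_0)$ and its centralizer algebra encode.

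First I would check that $\theta$ is well defined, and here the completion is essential. A reflection $s$ lies in $W_b$ exactly when $sb=b$, equivalently $\alpha_s(b)=0$; hence for every $s\notin W_b$ occurring in \eqref{thetay} one has $\alpha_s(b)\neq 0$, so $x^{(b)}_{\alpha_s}+\alpha_s(b)$ has invertible constant term and is therefore a unit in the complete local algebra $A=\widehat{H_{t,c'}(W_b,\h)}_0$. This makes the Dunkl-type summands meaningful. One then checks that $\theta(x_\alpha)$, $\theta(y_a)$ and $\theta(u)$ are right $A$-linear endomorphisms of $P$, so that they indeed land in $C(W,W_b,A)$.

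Next I would verify that $\theta$ respects the defining relations of $H_{t,c}(W,\h)$. The relations $[x_\alpha,x_{\alpha'}]=0$ and the $W$-equivariance relations are immediate, since $\theta(x_\alpha)$ acts by ``diagonal'' multiplication and $\theta(u)$ by right translation. The substantive computation, which I expect to be the main obstacle, is the commutator
$$[\theta(x_\alpha),\theta(y_a)]=t(y_a,x_\alpha)-\sum_{s\in S}c(s)(y_a,\alpha_s)(\alpha_s^\vee,x_\alpha).$$
Evaluating $[\theta(x_\alpha),\theta(y_a)](f)$ at $w\in W$, the diagonal contributions (the $y^{(b)}_{wa}$ term together with multiplication by $x^{(b)}_{w\alpha}+(w\alpha,b)$) should reproduce the parabolic part, namely $t(y_a,x_\alpha)$ and the sum over reflections inside $W_b$. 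The off-diagonal terms, involving the translates $f(sw)$ and the units $\big(x^{(b)}_{\alpha_s}+\alpha_s(b)\big)^{-1}$, must collapse to exactly $-\sum_{s\notin W_b}c(s)(y_a,\alpha_s)(\alpha_s^\vee,x_\alpha)$; this is where the factor $\tfrac{2c(s)}{1-\lambda_s}$ and the value $\alpha_s(b)$ conspire, and it requires careful bookkeeping of the action of $s$ on $\alpha$ and on $a$. The relation $[\theta(y_a),\theta(y_{a'})]=0$ is handled by a longer computation of the same flavour in the completed algebra.

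Finally, to promote $\theta$ to an isomorphism I would pass to associated graded algebras. By the PBW theorem \cite[Theorem 1.3]{EG} one has $H_{t,c}(W,\h)\cong\c[\h]\otimes\c W\otimes\c[\h^*]$, and similarly for $H_{t,c'}(W_b,\h)$, so after completion both sides carry compatible filtrations with commutative symbols. At the symbol level $\mathrm{gr}\,\theta$ reduces to the classical decomposition of $\widehat{\c[\h]}_{\m}$ as the product of local rings indexed by $W\cdot b\cong W/W_b$, which is an isomorphism. Since both completions are complete and separated for these filtrations, an isomorphism on $\mathrm{gr}$ lifts to the desired isomorphism $\theta$.
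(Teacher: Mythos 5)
There is nothing in the paper to compare your proposal against: the authors do not prove this statement, they quote it (with attribution) from [BE, Theorem 3.2] and use it as a black box in Section 5. Judged against the original argument of Bezrukavnikov and Etingof, your sketch follows essentially the same route: one checks that the displayed formulas respect the defining relations of $H_{t,c}(W,\h)$ --- the crucial observation being exactly the one you isolate, that $s\in W_b$ if and only if $\alpha_s(b)=0$, so for $s\notin W_b$ the element $x^{(b)}_{\alpha_s}+\alpha_s(b)$ is a unit in the complete local algebra and the Dunkl-type summands make sense --- and then one upgrades the homomorphism to an isomorphism by a filtration argument whose associated graded statement is the classical ($t=c=0$) decomposition of the completed crossed product $\c[\h\oplus\h^*]\rtimes W$ along the orbit $W\cdot b$ as a matrix/centralizer algebra over the corresponding completion for $W_b$. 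So your proposal is correct in outline and faithful to the known proof.

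Two details deserve flagging. First, your displayed commutator relation, copied from the paper, omits the reflection group elements: the correct relation reads $[x_\alpha,y_a]=t(y_a,x_\alpha)-\sum_{s\in S}c(s)(y_a,\alpha_s)(\alpha_s^\vee,x_\alpha)\,s$, with $s$ (hence $\theta(s)$) as a factor in each summand; without it the verification cannot close, since --- as your own prose indicates --- the cross terms produce translates $f(sw)$, i.e.\ the operators $\theta(w^{-1}sw)$, not scalars. Concretely the bookkeeping comes out as follows: the reflections $s$ with $wsw^{-1}\in W_b$ arise from the \emph{diagonal} term, via the relation $[x^{(b)}_{w\alpha},y^{(b)}_{wa}]$ inside $H_{t,c'}(W_b,\h)$ together with the equivariance $f(ws)=(wsw^{-1})f(w)$, while the remaining reflections come from the cross terms, where $x^{(b)}_{w\alpha}-x^{(b)}_{sw\alpha}$ is a scalar multiple of $x^{(b)}_{\alpha_s}$ and $(w\alpha,b)-(sw\alpha,b)$ is the same multiple of $\alpha_s(b)$, so the denominator $x^{(b)}_{\alpha_s}+\alpha_s(b)$ cancels exactly; this is where $\tfrac{2c(s)}{1-\lambda_s}$ and $\alpha_s(b)$ ``conspire,'' as you put it. Second, the associated graded algebra is not commutative: with respect to the order filtration ($\deg y=1$, $\deg x=\deg w=0$) it is the crossed product $\c[\h\oplus\h^*]\rtimes W$ (completed in the $\h$-direction along the orbit), and it is this crossed-product decomposition, not a purely commutative one, that $\mathrm{gr}\,\theta$ must be matched with; since that filtration is exhaustive and bounded below on both sides, an isomorphism on the associated graded does lift to the completions, as you claim.
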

\ni Moreover for $t=0$, when restricted to centers we obtain an isomorphism of Poisson algebras
\begin{equation}
\la{theta}
\theta:\widehat{Z_c(W, \mathfrak{h})}_b\cong \widehat{Z_{c'}(W_b, \mathfrak{h})}_0.
\end{equation}
\textbf{From now on, we will be assuming that $t=0.$}
First we need to describe images of the subalgebras $\mathbb{C}[\h]^{W}, \mathbb{C}[\h^*]^{W}$ under $\theta .$
It follows from \eqref{thetax} that 
$$
\theta g(x)=g(x+b), x\in \h, g\in \mathbb{C}[\h]^W.
$$
Moreover, the image $\theta(\c[\h]^{W})$ is dense in $\widehat{\c[\h]^{W_b}_0}$ by \cite[Lemma 4.4]{B}.
Notice that even though the algebra $\widehat{H_{c'}(W_b,\h)}_0$ is no longer graded,
we still have the filtration on it corresponding to the grading on 
$H_{c'}(W_b,\h)$. This filtration is compatible with the Poisson bracket and 
$\gr{\widehat{H_{c'}(W_b,\h)}_0}=H_{c'}(W_b, \mathfrak{h})$.
We have the induced filtration on $\widehat{Z_{c'}(W_b, \mathfrak{\h})}_0.$
Now it follows easily from \eqref{thetay} that
$$
\mathrm{gr}
(\theta |_{\c[\h^*]^{W}}) 
= \mathrm{Id}_{\c[\h^*]^{W}}.
$$ 
Now we can easily show the following.
\begin{lemma}\label{reduction}
Let $\mathcal{O}\subset \mathbb{C}[\h^*]^{W}$. If  $\mathcal{O}$ and $ \, \mathbb{C}[\h]^{W_b}$
 generate $Z_{c'}(W_b, \h)$ as a Poisson algebra, then the resulting Poisson algebra 
 is dense in $\widehat{Z_c(W, \h)}_b$.
\end{lemma}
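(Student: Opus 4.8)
The plan is to transport the whole question through the Poisson isomorphism $\theta$ of \eqref{theta} and then argue on the associated graded. Since $\theta$ is an isomorphism of complete filtered Poisson algebras, it carries the Poisson subalgebra $B$ generated by $\mathcal{O}$ and $\c[\h]^W$ inside $\widehat{Z_c(W,\h)}_b$ onto the Poisson subalgebra generated by $\theta(\mathcal{O})$ and $\theta(\c[\h]^W)$ inside $\widehat{Z_{c'}(W_b,\h)}_0$, and it preserves closures; hence it suffices to prove that the latter subalgebra is dense. I will write $\bar B$ for its closure, which is again a closed Poisson subalgebra because multiplication and the bracket are continuous.

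First I would record two facts about $\bar B$. From the density of $\theta(\c[\h]^W)$ in $\widehat{\c[\h]^{W_b}}_0$ (established above via \cite[Lemma 4.4]{B}) together with $\theta(\c[\h]^W)\subset\bar B$, the closed set $\bar B$ contains all of $\widehat{\c[\h]^{W_b}}_0$, in particular $\c[\h]^{W_b}$. From $\mathrm{gr}(\theta|_{\c[\h^*]^{W}})=\mathrm{Id}$, for each $f\in\mathcal{O}\subset\c[\h^*]^W$ the leading symbol of $\theta(f)$ is $f$ itself, viewed through $\c[\h^*]^W\hookrightarrow\c[\h^*]^{W_b}\hookrightarrow Z_{c'}(W_b,\h)$, while $\theta(f)\in\bar B$.

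Next I would pass to the associated graded. Because the filtration is compatible with multiplication and with the Poisson bracket, $\mathrm{gr}(\bar B)$ is a graded Poisson subalgebra of $\mathrm{gr}\,\widehat{Z_{c'}(W_b,\h)}_0$; and just as $\mathrm{gr}\,\widehat{H_{c'}(W_b,\h)}_0=H_{c'}(W_b,\h)$ was recalled above, the induced filtration identifies $\mathrm{gr}\,\widehat{Z_{c'}(W_b,\h)}_0$ with $Z_{c'}(W_b,\h)$. By the previous paragraph $\mathrm{gr}(\bar B)$ contains the symbols of the elements of $\theta(\mathcal{O})$, namely $\mathcal{O}$, as well as the homogeneous elements of $\c[\h]^{W_b}$, which are their own symbols. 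By hypothesis $\mathcal{O}$ and $\c[\h]^{W_b}$ Poisson-generate $Z_{c'}(W_b,\h)$, so $\mathrm{gr}(\bar B)=Z_{c'}(W_b,\h)=\mathrm{gr}\,\widehat{Z_{c'}(W_b,\h)}_0$.

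Finally, a closed subspace of a separated, complete filtered space whose associated graded is everything must be the whole space; applying this to $\bar B$ gives $\bar B=\widehat{Z_{c'}(W_b,\h)}_0$, which is the asserted density. I expect the main obstacle to lie in the associated-graded step: one must verify that passing to leading symbols genuinely converts the \emph{filtered} generation hypothesis for $Z_{c'}(W_b,\h)$ into generation of the graded algebra by $\mathrm{gr}(\bar B)$. This is exactly why it matters that $\theta$ preserves the top symbols of the $\c[\h^*]^W$-generators, so that no filtration degree is lost, and that the completeness of $\widehat{Z_{c'}(W_b,\h)}_0$ upgrades equality of associated gradeds into honest density.
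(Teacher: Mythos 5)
Your proof is correct and follows essentially the same route as the paper's own argument: transport the problem through the Poisson isomorphism $\theta$ of \eqref{theta}, use the density of $\theta(\c[\h]^{W})$ in $\widehat{\c[\h]^{W_b}}_0$, use $\mathrm{gr}(\theta|_{\c[\h^*]^{W}})=\mathrm{Id}$ to recover $\mathcal{O}$ as leading symbols, and then invoke the Poisson-generation hypothesis for $Z_{c'}(W_b,\h)$. The only difference is that you spell out the associated-graded and completeness bookkeeping that the paper compresses into its final sentence.
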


\begin{proof}
In view of the Poisson isomorphism \eqref{theta}, we need
to show that $\theta(\mathcal{O}), \theta(\mathbb{C}[\h^*]^{W})$
generate a dense subalgebra of $\widehat{Z_{c'}(W_b, \mathfrak{h})}.$
By remarks preceding this Lemma, $\theta(\mathbb{C}[\h]^{W})$ is dense in $\widehat{\mathbb{C}[\h]^{W_b}_0},$
and $\gr{\theta|_{\mathbb{C}[\h^*]^{W}}}=\mathbb{C}[\h^*]^{W}.$ 
Since by assumption $\mathcal{O}$ and $\mathbb{C}[\h]^{W_b}$ Poisson
generate $Z_{c'}(W_b, \h),$ this completes the proof.
\end{proof}

Next we will use the following result.

\begin{lemma}\label{key}
Let $Z$ be a  Poisson algebra over $\mathbb{C}$
such that $\spec Z$ is a symplectic variety.
Let $I\subset Z$ be a finite codimensional proper involutive ideal: $ \lbrace I, I\rbrace\subset I.$
Then $I\subset \m^2$ for every maximal ideal $\m$ containing $I.$
\end{lemma}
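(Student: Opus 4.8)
The plan is to fix a maximal ideal $\m \supset I$, corresponding to a point $p$ of the smooth symplectic variety $\spec Z$, and to prove that the image of $I$ in the cotangent space $\m/\m^2$ vanishes, which is exactly the assertion $I \subset \m^2$. First I would record the standard fact that the bracket induces a well-defined skew pairing $\omega_p(\bar f, \bar g) := \{f,g\}(p)$ on $\m/\m^2$: it is well defined because the Leibniz rule forces $\{f,g\}(p)=0$ whenever $f\in\m^2$, and it is \emph{nondegenerate} precisely because $\spec Z$ is symplectic, so $\omega_p$ is nothing but the symplectic form on $T_p^*(\spec Z)$. This is the only place where smoothness and symplecticity, rather than the mere existence of a Poisson structure, are used.

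Next I would extract from the involutivity hypothesis that the linear part $L := (I+\m^2)/\m^2$ is \emph{isotropic} for $\omega_p$: for $f,g\in I$ we have $\{f,g\}\in I\subset\m$, hence $\omega_p(\bar f,\bar g)=\{f,g\}(p)=0$. The whole problem is now to upgrade ``isotropic'' to ``zero'', and this is where the finite-codimension hypothesis must enter.

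The heart of the argument is a descent contradiction. Suppose $L\neq 0$ and pick $f\in I$ with $\bar f\neq 0$ in $\m/\m^2$. By nondegeneracy of $\omega_p$ there is $h\in\m$ with $\{f,h\}\equiv 1 \pmod{\m}$, so $\{f,h\}$ is a unit in the local ring $Z_\m$. I would then localize at $\m$: the Poisson bracket extends uniquely to $Z_\m$ and $I_\m$ stays involutive, while $Z_\m/I_\m$ is a local Artinian ring because $I$ has finite codimension. Two facts now collide. On one hand, $\{f,h\}$ being a unit forces $h\notin I_\m$, since otherwise $\{f,h\}\in\{f,I_\m\}\subset I_\m\subset\m Z_\m$. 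On the other hand $h$ lies in the maximal ideal of the Artinian ring $Z_\m/I_\m$, hence is nilpotent there, so there is a minimal $N\ge 2$ with $h^N\in I_\m$. Applying involutivity to $f,\,h^N\in I_\m$ together with the Leibniz rule gives $\{f,h^N\}=N\,h^{N-1}\{f,h\}\in I_\m$; since $N\{f,h\}$ is a unit, this yields $h^{N-1}\in I_\m$, contradicting minimality of $N$. Therefore $L=0$, i.e. $I\subset\m^2$.

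The main obstacle I anticipate is the passage from isotropy to vanishing: isotropic subspaces of a symplectic space can be as large as half-dimensional, so the pairing $\omega_p$ by itself gives only the useless bound $\dim L\le n$. The genuine input is the interaction between involutivity and the Artinian (finite-codimension) hypothesis, packaged as the implication $h^N\in I\Rightarrow h^{N-1}\in I$. In writing this up I would take care to justify that the bracket localizes and preserves $I_\m$, and to confirm that $\{f,h\}$ being a unit is what automatically guarantees $h\notin I_\m$, so that the minimal exponent satisfies $N\ge 2$ and the descent is nonvacuous.
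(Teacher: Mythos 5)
Your proof is correct, and it reaches the paper's conclusion by a route that is genuinely more elementary in one respect. The paper argues in the \emph{completion} $\hat{Z}_{\m}$ and invokes the formal Darboux theorem to produce canonical coordinates $f_1,\dots,f_n,g_1,\dots,g_n$ with $\{f_i,g_j\}=\delta_{ij}$ and $f_1=f$ (using $f\notin\m^2$); finite codimension gives $\m^l\subset I$, hence $g_1^l\in I\hat{Z}_{\m}$, and repeated bracketing with $f_1$ descends $g_1^l \mapsto l\,g_1^{l-1}\mapsto\cdots\mapsto$ a unit, contradicting properness. Your argument runs the \emph{same descent engine} — Leibniz plus involutivity knocks one power off an element paired nontrivially with $f$, and finite codimension guarantees such a power lies in the ideal — but you source the partner element $h$ not from Darboux coordinates but from mere pointwise nondegeneracy of the Poisson pairing on $\m/\m^2$, and you run the descent in the localization $Z_\m$ (where $Z_\m/I_\m$ is Artinian, so $h$ is nilpotent mod $I_\m$) rather than in the completion. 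What you give up is the exact normalization $\{f_1,g_1\}=1$, which costs nothing since a unit multiple suffices in characteristic zero; what you gain is that no structural theorem about symplectic varieties beyond nondegeneracy of the bivector at a point is needed, and the mildly delicate steps (extending the bracket to $\hat Z_\m$, properness and involutivity of $I\hat Z_\m$) are replaced by the routine verification that the bracket localizes and that $I_\m$ stays involutive — verifications you correctly flag as the points to write out. One stylistic remark: your preliminary observation that $(I+\m^2)/\m^2$ is isotropic is not actually used by the descent and could be dropped; the proof begins in earnest with the choice of $h$.
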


\begin{proof}
Suppose that the conclusion does not hold. Then there exists
 $f\in I$ and a maximal ideal $\m$ containing $I$ such that $f\notin \m^2.$
 Since $Z/I$ is finite dimensional, there is some integer $l>0$ that
$\m^l\subset I$. Since $Z$ is symplectic, it follows from the formal Darboux theorem that
$\hat{Z}_{\m}=\mathbb{C}[[f_1,\cdots, f_n, g_1,\cdots, g_n]]$ with the usual Poisson bracket $\lbrace f_i,g_j\rbrace=\delta_{ij}.$
Moreover, we may assume without loss of generality that $f_1=f$. Clearly $I\hat{Z}_{\m}$ is a proper involutive ideal of $\hat{Z}_{\m}.$
Since $g_1^l\in I\hat{Z}_{\m},$ we get that $1\in I\hat{Z}_{\m},$  a contradiction.
\end{proof}

In what follows, we will only deal with Cherednik algebras and Calogero-Moser spaces 
associated with the wreath product of $S_n$ with a cyclic group $\Gamma$ of order $m$. 
 Let $W=S_n\ltimes \Gamma^n$ which we will simply denote by $S_n\wr \Gamma$. 
 Given $\gamma \in \Gamma$, we write $\gamma_i $ for $\gamma$ placed
 in the $i$-th factor of $\Gamma^n$.  Let $\alpha$ be a fixed 
generator of $\Gamma$ and $s_{ij}$ be an elementary transposition $i \leftrightarrow j$ in $S_n$.
The group $W$ acts naturally on $\mathfrak{h}=\mathbb{C}^n$
with basis vectors $y_1,\cdots,y_n : \,$ (1) $\, S_n$ acts by permutations on $\mathbb{C}^n$ ; (2) 
$\Gamma^n \times \mathbb{C}^n \rightarrow  \mathbb{C}^n$ is simply the $n$-th Cartesian power of the
natural cyclic group action $\Gamma \times \c \rightarrow \c :  \, \alpha \cdot z = \xi z$ where $\xi$ is
the $m$-th primitive root of unity.

Let $x_1,\cdots,x_n$ be the dual basis of $\mathfrak{h}^*.$
Let $ c=(c_0,\cdots, c_m)\in \mathbb{C}^{m+1}$ be a set of parameters. 
Then the corresponding
Cherednik algebra $H_c(W, \mathfrak{h})$ is defined as the quotient of
$\mathbb{C}[W]\ltimes T(\mathfrak{h}\oplus\mathfrak{h}^*)$ by the  relations
\begin{eqnarray*}
\left[ x_i, x_j\right] & = &\left[y_i, y_j\right]=0, \\
\left[ y_i, x_j \right] & = & -c_0\sum_k s_{ij}\xi^k\alpha_i^k\alpha_j^{-k},\\
\left[ y_i,x_i \right] & = &
c_0\sum_{j\neq i}\sum_{k=0}^{m-1}s_{ij}\alpha_i^k\alpha_j^{-k}+
\sum_{k=1}^{l-1}c_k\alpha_i^k\, .
\end{eqnarray*}
It is easy to see that
$$
\sum_{i=1}^n x_i^{km}, \quad
\sum_{i=1}^ny_i^{km}\in Z_c(W, \mathfrak{h}) \, , \quad k\geq 1.
$$
To shorten the notation, we omit $(W, \h)$ and simply write $H_c$ and $Z_c$.
Recall $X_c =\spec Z_c$. 
\begin{theorem}\label{main}
%
 Let $c$ be  such that $X_c$ is smooth. 
Let $\mathcal{O}$ be a graded subalgebra of $\mathbb{C}[\h]^{W}$
 such that $\mathbb{C}[\h]^{W}$ is finite over $\mathcal{O}.$  Then $Z_c$ is generated as a Poisson algebra 
by $\mathcal{O}$ and  $\mathbb{C}[\h^*]^{W}.$
Similar statement holds for a graded subalgebra $\mathcal{O}'\subset \mathbb{C}[\h^*]^{W}$
such that $ \mathbb{C}[\h^*]^{W}$ is finite over $\mathcal{O}'.$


\end{theorem}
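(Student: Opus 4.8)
The plan is to show that $Z_c$ is a finite module over the Poisson subalgebra it is supposed to generate, and then to force equality by a local analysis at each point of $X_c$, using the completion isomorphism \eqref{theta} of Bezrukavnikov--Etingof together with Lemma~\ref{key}. The defining relations of $H_c$ are symmetric under $x_i\leftrightarrow y_i$ (with $c$ suitably transformed), so it is enough to treat the form matching Lemma~\ref{reduction}: take $\mathcal{O}\subset\mathbb{C}[\h^*]^{W}$ with $\mathbb{C}[\h^*]^{W}$ finite over $\mathcal{O}$, set $P:=\langle\mathcal{O},\,\mathbb{C}[\h]^{W}\rangle_{\mathrm{Poiss}}\subseteq Z_c$, and prove $P=Z_c$; the primary statement then follows by the same symmetry. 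First I would record that $Z_c$ is finite over $P$: the product morphism $X_c\to\h^*/W\times\h/W$ is finite, so $Z_c$ is finite over $\mathbb{C}[\h^*]^{W}\cdot\mathbb{C}[\h]^{W}$; since $\mathbb{C}[\h^*]^{W}$ is finite over $\mathcal{O}$ and $\mathbb{C}[\h]^{W}\subseteq P$, it is finite over the commutative subalgebra $\mathcal{O}\cdot\mathbb{C}[\h]^{W}\subseteq P$, hence over $P$. Consequently $M:=Z_c/P$ is a finite $P$-module, and it suffices to prove $M=0$.

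Next I would reduce $P=Z_c$ to density of the image of $P$ in the completions $\widehat{(Z_c)}_{b}$ along $\m_b\subset\mathbb{C}[\h]^{W}$: by faithful flatness of completion this density forces $M$ to vanish on the corresponding $\pi_2$-fibre, and these fibres exhaust all closed points of $X_c$. For $b\neq0$ the stabilizer $W_b$ is a \emph{proper} parabolic subgroup of $W=S_n\wr\Gamma$ — a product of a smaller wreath product $S_{n_0}\wr\Gamma$ ($n_0<n$) with symmetric-group factors — so one inducts on $|W|$. Here \eqref{theta} identifies $\widehat{(Z_c)}_b$ with $\widehat{Z_{c'}(W_b,\h)}_0$, the hypotheses transfer (since $\mathbb{C}[\h^*]^{W_b}$ is finite over $\mathbb{C}[\h^*]^{W}$, hence over $\mathcal{O}$, and $X_{c'}(W_b)$ is again smooth), and the inductive hypothesis for $W_b$ gives that $\mathcal{O}$ and $\mathbb{C}[\h]^{W_b}$ Poisson-generate $Z_{c'}(W_b,\h)$. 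Feeding this into Lemma~\ref{reduction} yields the desired density; the two facts $\mathrm{gr}(\theta|_{\mathbb{C}[\h^*]^{W}})=\mathrm{Id}$ and the density of $\theta(\mathbb{C}[\h]^{W})$ recorded before that lemma are exactly what makes the transfer work. Running the symmetric reduction along $\pi_1$ (using the full $\mathbb{C}[\h]^{W}\subseteq P$) in the same way, I conclude that $\mathrm{supp}(M)$ lies over $(\bar 0,\bar 0)\in\h^*/W\times\h/W$; finiteness of the product morphism then makes $\mathrm{supp}(M)$ a finite set, so $M$ is finite-dimensional.

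It remains to rule out $M\neq0$, which can now only be detected at the finitely many points of the central fibre. For this I would pass to the conductor $\mathfrak{f}:=\{z\in Z_c:\ zZ_c\subseteq P\}=\Ann_{Z_c}(M)$, the largest ideal of $Z_c$ contained in $P$. Since $M$ is finite-dimensional, $Z_c/\mathfrak f\hookrightarrow\End_{\c}(M)$, so $\mathfrak f$ has finite codimension and, if $M\neq0$, is proper. The key observation is that $\mathfrak f$ is \emph{involutive}: for $f,g\in\mathfrak f$ and $z\in Z_c$ the Leibniz rule gives $\{f,g\}\,z=\{f,gz\}-g\,\{f,z\}$, where $\{f,gz\}\in\{P,P\}\subseteq P$ because $gz\in\mathfrak f\subseteq P$, and $g\,\{f,z\}\in P$ because $g\,Z_c\subseteq P$; hence $\{f,g\}\in\mathfrak f$. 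Thus $\mathfrak f$ is a proper, finite-codimensional, involutive ideal of the symplectic ring $Z_c$, and Lemma~\ref{key} forces $\mathfrak f\subseteq\m^2$ for every maximal ideal $\m$ in its (finite, and $\c^\times$-stable) zero locus.

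The main obstacle is the resulting contradiction at the deepest point $\m$ of $X_c$, the $\c^\times$-fixed point over $(\bar0,\bar0)$: there the parabolic reduction is vacuous since $W_{0}=W$, and for $m\geq2$ \emph{all} the generators $\mathbb{C}[\h]^{W},\mathbb{C}[\h^*]^{W}$ lie in $\m^2$, so $\mathfrak f\subseteq\m^2$ is not yet absurd. Here I expect to have to use the explicit Poisson brackets of the power-sum generators $\sum_i x_i^{am},\ \sum_i y_i^{bm}$: iterated brackets strictly lower the degree, and one must exhibit a bracket-generated element which, after multiplication by a suitable member of the conductor, lands in $\mathfrak f$ with nonzero differential at $\m$, contradicting $\mathfrak f\subseteq\m^2$ and hence forcing $M=0$. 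Establishing this non-vanishing at the central point — equivalently, that the Poisson subalgebra is already dense there — is the crux; smoothness of $X_c$ enters both through the symplectic (Darboux) input to Lemma~\ref{key} and through the normality needed to pass from density in the completions to the global equality $P=Z_c$.
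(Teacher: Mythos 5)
Your first half — completing at $b\neq 0$ along $\h/W$, inducting on parabolic subgroups via the Bezrukavnikov--Etingof isomorphism and Lemma~\ref{reduction}, and aiming ultimately at a finite-codimensional involutive ideal to which Lemma~\ref{key} applies — is exactly the paper's strategy, and your Leibniz-rule proof that the conductor $\mathfrak{f}$ is involutive is a clean observation. But the step ``running the symmetric reduction along $\pi_1$'' is a genuine gap. The completion machinery is asymmetric in precisely the way that blocks it: when one completes at $b\in\h$, what makes Lemma~\ref{reduction} work is that the image of the \emph{full} ring $\mathbb{C}[\h]^{W}$ is dense in $\widehat{\mathbb{C}[\h]^{W_b}}_0$, while the subalgebra on the $\h^*$-side only needs to be controlled through $\mathrm{gr}$. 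If you instead complete at $w\in\h^*$, $w\neq 0$, the roles swap: you would need density of the image of the full $\mathbb{C}[\h^*]^{W}$ in $\widehat{\mathbb{C}[\h^*]^{W_w}}_0$, but $P$ contains only $\mathcal{O}$ on that side, and the closure of $\theta(\mathcal{O})$ is the completion of $\mathcal{O}$ at the image point $w'\in\Spec\mathcal{O}$, which is a \emph{proper} subring whenever $\Spec\mathbb{C}[\h^*]^{W}\to\Spec\mathcal{O}$ is ramified at $w$. Ramification at nonzero points is unavoidable in general (e.g.\ $\mathcal{O}=\mathbb{C}[p_1^2,p_2,\dots,p_n]$ is ramified along the entire hypersurface $p_1=0$). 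So you cannot conclude $\supp(M)\subseteq\{(\bar 0,\bar 0)\}$, hence not that $M$ is finite dimensional, and without finite codimension of $\mathfrak{f}$ Lemma~\ref{key} never gets off the ground. The paper avoids any second completion: it proves only $\supp\subseteq\{0\}\times\Spec\mathcal{O}$ and then exploits the grading — $I=\sqrt{\Ann_R(Z_c/R)}$ is a \emph{graded} Poisson ideal containing $(\mathbb{C}[\h]^{W})_+$, a maximal graded Poisson ideal $J\supseteq I$ automatically has finite codimension (the quotient $R/J$ is a reduced, non-negatively graded Poisson algebra, finite over $\mathcal{O}$, with no nonzero proper graded Poisson ideals, hence equals its degree-zero part), and $J'=JZ_c$ is the finite-codimensional involutive ideal fed into Lemma~\ref{key}.

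The second gap is the one you flag yourself as ``the crux'': the contradiction at the $\c^{\times}$-fixed maximal ideals. In the paper this is not an explicit bracket computation but Lemma~\ref{GB} (due to Bellamy): for any maximal ideal $\m$ containing $(\mathbb{C}[\h]^{W}\otimes\mathbb{C}[\h^*]^{W})_+$ and lying in the smooth locus, the element $f=\sum_i x_i^{m}$ satisfies $f\notin\m^2$; the proof uses the Bellamy--Thiel splitting $\m/\m^2=V^+\oplus V^-$ together with the fact that homogeneous elements of $Z_c$ have degrees divisible by $m$ while $\deg f=m$. Note also that to clinch the contradiction one needs the involutive ideal to contain $f$ itself, not merely a power of it: the paper's $J'$ contains all of $(\mathbb{C}[\h]^{W})_+$ by construction (the annihilator is taken radically), whereas your conductor $\mathfrak{f}$ is only guaranteed to contain some power $\big((\mathbb{C}[\h]^{W})_+\big)^N$, and $f^N\in\m^2$ is no contradiction. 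So even granting finite dimensionality of $M$, you would still have to pass to the radical/graded-Poisson-ideal setup of the paper before Lemma~\ref{GB} could be invoked; with both repairs your argument essentially becomes the paper's proof.
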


We will use the following result kindly communicated to us by G.Bellamy.
The proof is due to him, all possible mistakes are ours.
\begin{lemma}
\label{GB}
Let  $W$ be a parabolic subgroup for the wreath product $S_n\wr \Gamma$.
Then there exists $f\in (\mathbb{C}[\mathfrak{h}]^W)_+$  such that
for any maximal ideal $\m \subset Z_c(W, \mathfrak{h})$ containing 
$(\mathbb{C}[\mathfrak{h}]^W)\otimes \mathbb{C}[\mathfrak{h}]^W)_+$ and
lying in the smooth locus of $X_c(W),$
we have $f\notin \m^2.$
%
%
\end{lemma}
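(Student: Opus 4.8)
The plan is to reinterpret the condition geometrically and then peel it down to a pointwise statement. A maximal ideal $\m$ of $Z_c(W,\h)$ contains the augmentation ideal $(\c[\h]^W\otimes\c[\h^*]^W)_+$ precisely when the corresponding point of $X_c(W)$ lies in the fibre $\Gamma^{-1}(0,0)$ of the finite map $\Gamma\colon X_c(W)\to\h^*/W\times\h/W$. Since $\Gamma$ is finite this fibre is a finite set, and since $\Gamma$ is $\c^*$-equivariant (both invariant subrings are graded) while $(0,0)$ is the unique $\c^*$-fixed point of the base, the fibre is a finite $\c^*$-stable set and therefore consists of $\c^*$-fixed points. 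Let $P=\{p_1,\dots,p_s\}$ be those points of $\Gamma^{-1}(0,0)$ lying in the smooth locus; these are exactly the $\m$ appearing in the statement. For $f\in(\c[\h]^W)_+$ the condition $f\notin\m_{p}^2$ says precisely that $df|_{p}\neq 0$ in the cotangent space $\m_{p}/\m_{p}^2$, so I must produce a single invariant whose differential is nonzero at every $p\in P$.

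First I would reduce to one point at a time. For each $p\in P$ set $V_p=\{f\in(\c[\h]^W)_+ : f\in\m_{p}^2\}$, a linear subspace of the $\c$-vector space $(\c[\h]^W)_+$, which is proper exactly when \emph{some} invariant has nonzero differential at $p$. A vector space over the infinite field $\c$ is never a finite union of proper subspaces; working inside a finite-dimensional graded subspace containing, for each $p$, a witnessing invariant, one concludes that if every $V_p$ is proper then $\bigcup_p V_p\neq(\c[\h]^W)_+$, producing a common $f\notin\bigcup_p\m_{p}^2$. Thus the lemma reduces to the pointwise claim: at each smooth fixed point $p$ one has $(\c[\h]^W)_+\not\subseteq\m_{p}^2$, equivalently $d\pi_2|_{p}\neq 0$, where $\pi_2\colon X_c(W)\to\h/W$ is induced by $\c[\h]^W\hookrightarrow Z_c$.

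The pointwise claim is the heart of the matter, and it is where the specific structure of $Z_c$ must enter. I would prove it by exhibiting an explicit invariant with nonzero differential, using the central power sums. Since $W\subseteq G(m,1,n)$, the elements $f=\sum_i x_i^{m}$ and $h=\sum_i y_i^{m}$ lie in $Z_c$ (cf.\ the displayed central elements above), with $\deg f=m$ and $\deg h=-m$, so $\{f,h\}\in Z_c$ has degree $0$. At a smooth symplectic point one has $\{f,h\}(p)=\omega^{-1}_p(df|_{p},dh|_{p})$, so a nonzero value of $\{f,h\}$ — or of some $\{\sum_i x_i^{km},\sum_i y_i^{lm}\}$ — at $p$ forces $df|_{p}\neq 0$, which is exactly what is needed. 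To verify this nonvanishing I would pass to $\c^*$-equivariant formal Darboux coordinates at the isolated fixed point $p$ (available because $\omega$ is $\c^*$-invariant and $p$ has no weight-zero tangent directions), and compute on the quiver-variety model furnished by the Etingof--Ginzburg identification of $X_c(W)$ with $\CC_{n}^{\tau}(Q_m)$, where the fixed points and their tangent weights are known combinatorially and the power-sum invariants can be checked to pair nontrivially.

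The main obstacle is precisely this last step, the pointwise nonvanishing $d\pi_2|_{p}\neq 0$, and it genuinely uses the full structure of $Z_c$: it cannot follow from soft symplectic or finiteness considerations alone. Indeed, for a $\c^*$-equivariant finite map $\Gamma=(\pi_1,\pi_2)$ from a smooth symplectic space to $\h^*/W\times\h/W$ with Poisson-commutative (Lagrangian) components, $d\pi_2$ may vanish at an isolated fixed point — as for $\Gamma=(v^2,u^2)\colon\A^2\to\A^2$ with $\{u,v\}=1$. What excludes this for the genuine Calogero--Moser space is the presence of the extra degree-zero central structure (the Euler element $\mathbf{eu}_0$ with $\{\mathbf{eu}_0,z\}=\deg(z)\,z$ and the defining relations it satisfies) together with the smoothness of $X_c$. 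I therefore expect the write-up to spend essentially all of its effort on establishing $\{f,h\}(p)\neq 0$ on the quiver-variety model, the preceding reductions being formal.
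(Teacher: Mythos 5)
Your reductions are sound as far as they go: identifying the relevant maximal ideals with the finite, $\mathbb{C}^*$-fixed fibre $\Gamma^{-1}(0,0)$, and the avoidance-of-finitely-many-proper-subspaces trick, are both correct. But they only reduce the lemma to the pointwise claim that $(\mathbb{C}[\mathfrak{h}]^W)_+\not\subseteq \mathfrak{m}_p^2$ at each smooth fixed point $p$, and that claim — which you yourself identify as the heart of the matter, not derivable from soft symplectic or finiteness considerations — is never proved; it is only deferred to an unspecified "check" on the quiver-variety model. Worse, the criterion you propose to run that check is not sufficient as stated: at a smooth point, $\{f,h\}(p)\neq 0$ does imply $df|_p\neq 0$, but the converse fails, since two nonzero covectors can be symplectically orthogonal. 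So even granting $df|_p\neq 0$ (the thing to be proved), there is no guarantee that any particular pair of power sums pairs nontrivially, and you would have to produce, for every smooth fixed point, explicit $k,l$ with $\{\sum_i x_i^{km},\sum_i y_i^{lm}\}(p)\neq 0$. No such computation is given or even outlined concretely, so the entire content of the lemma is missing. (A smaller omission: the statement is about parabolic subgroups $W$ of $S_n\wr\Gamma$, and your proposal nowhere reduces to, or treats, the genuinely parabolic case.)

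For comparison, the paper closes exactly this gap by a purely graded-algebra argument with no symplectic pairing computation at all. It first reduces to $W=S_n\wr\Gamma$, parabolic subgroups being direct products of smaller wreath products. Then, at a graded point $\mathfrak{m}$ of the smooth locus, it invokes the Bellamy--Thiel structure theory: $\mathfrak{m}/\mathfrak{m}^2=V^+\oplus V^-$ with $\dim V^{\pm}=n$, and one may choose homogeneous lifts $p_1,\dots,p_n$ and $q_1,\dots,q_n$ so that $\mathbb{C}[\mathfrak{h}]^{W}\subset\mathbb{C}[p_1,\dots,p_n]$ and $\mathbb{C}[\mathfrak{h}^*]^{W}\subset\mathbb{C}[q_1,\dots,q_n]$. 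Since every homogeneous element of $Z_c$ has degree divisible by $m$ (a $\Gamma^n$-invariant monomial $x^ay^b$ must satisfy $a_i-b_i\equiv 0 \pmod m$ for each $i$), the hypothesis $f=\sum_i x_i^{m}\in\mathfrak{m}^2$ would force $f\in (V^+)^2\,\mathbb{C}[p_1,\dots,p_n]$, whose homogeneous elements have degree at least $2m$; this contradicts $\deg f=m$. Note that this argument proves the stronger, uniform statement that the single element $f=\sum_i x_i^m$ avoids $\mathfrak{m}^2$ for \emph{every} relevant $\mathfrak{m}$ simultaneously, so the paper does not even need your union-of-subspaces step. If you want to salvage your approach, the Bellamy--Thiel grading is precisely the "hard input" your sketch is missing; the fixed-point combinatorics route would require substantially more work and is not obviously easier than the degree argument.
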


\begin{proof}
Since parabolic subgroups of $S_n\wr \Gamma$ consist of direct products of similar wreath products,
we may assume without loss of generality that $W=S_n\wr \Gamma.$
Then we claim that elements $f=\sum_{i=1}^nx_i^{m}, g=\sum_{i=1}^ny_i^{m}$ do not belong to $\m^2$ for any
maximal ideal $\m \subset Z_c(W, \mathfrak{h})$ containing $(\mathbb{C}[\h]^{W}\otimes \mathbb{C}[\h^*]^{W})_+ \, $.
We prove this by contradiction.

Let $\m$ be a maximal ideal as above containing $f.$
As explained in \cite[Section 8.7]{BT}, $\m/\m^2$ is a graded vector space under the
$\mathbb{C}^*$-action on $Z_c(W, \mathfrak{h})$ such that
$\m/ \m^2=V^+\oplus V^{-}$ where $V^+$ and $V^{-}$ are 
spanned by positive and negative degree basis vectors respectively with
 $\dim V^+=\dim V^-=n.$ Let  $p_1,\cdots,p_n\in \m$ (resp. $q_1,\cdots, q_n \in \m$) be 
 homogeneous lifts of basis of $V^{+}$ (resp. $V^{-}$).
As explained in \cite{BT}, we may further assume that $\mathbb{C}[\h]^{W}\subset \mathbb{C}[p_1,\cdots,p_n]$ and 
$\mathbb{C}[\h^*]^{W}\subset \mathbb{C}[q_1,\cdots, q_n]$.  Suppose that $f\in  \m^2.$
Then by simple examination we obtain that $f\in (V^{+})^2\mathbb{C}[p_1,\cdots,p_n].$
However, it follows that the degree of any homogeneous element of $Z_c (W, \h)$ is a multiple of $m$
while degree of $f$ is $m.$ Therefore $f$ cannot be an element of  $(V^{+})^2\mathbb{C}[p_1,\cdots,p_n],$
hence a contradiction.
\end{proof}
%
%

%
\begin{proof} [Proof of Theorem \ref{main}] In fact, we will
show the desired result holds for all parabolic subgroups of $W$ with fixed
$\mathfrak{h}=\mathbb{C}^n$. We proceed by induction on $|W|.$ Suppose the statement holds for 
all proper parabolic subgroups of $W$ and we need to prove this for $W.$ 
Let $R$ be the Poisson algebra generated by  $\mathcal{O}\subset\mathbb{C}[\h^*]^{W}$ and $ \mathbb{C}[\h]^{W}$. 
Suppose that there exists 
$$ 
(v, w)\in \text{Supp}_{\mathbb{C}[\h]^{W}\otimes\mathcal{O}} (Z_c(W,\mathfrak{h})/R)
$$
such that $v\neq 0.$  Hence the Poisson algebra generated by $\mathcal{O}$ and $\mathbb{C}[\h]^{W}$ is not
dense in $\widehat{Z_c(W, \h)}_v.$  Therefore by Lemma \ref{reduction} algebras $\mathcal{O}$ and $\mathbb{C}[\h]^{W_v}$
do not generate  ${Z_{c'}(W_v, \h)}$ as a Poisson algebra. However, $W_v$ is a direct product of the wreath products of cyclic groups, which yields
 a contradiction to the induction assumption. Thus we may assume that 
\begin{equation}
\la{supp2}
\text{Supp}_{} (Z_c(W, \mathfrak{h})/R)\subset \lbrace0\rbrace\times\spec(\mathcal{O}).
\end{equation}
Put $I=\sqrt{\mathrm{Ann}_R(Z_c(W,\mathfrak{h})/R)}$. 
It is easy to see that $I$ is a graded  Poisson ideal of $R$. 
Let $J$ be a maximal graded Poisson ideal in $R$ containing $I$, in particular $\sqrt{J}=J.$ 
We claim that $J$ has a finite codimension in $R.$ 
From maximality of $J$ it follows that $S=R/J$ is a graded Poisson algebra having 
no proper nonzero graded Poisson ideals. Now we prove two properties of this algebra: (1) $S$ is finitely
generated over $\mathcal{O}$; (2) $S=S_0$ and ${\rm dim} \,  S_0 < \infty$

By \eqref{supp2} it follows that $\mathbb{C}[\h]^{W}_{+} \subset I.$ On the other hand $R$ is finitely generated over 
$\mathbb{C}[\h]^{W} \otimes \mathcal{O}$. Therefore  $S$ is finitely generated over $\mathbb{C}[\h]^{W} /
(\mathbb{C}[\h]^{W}_{+}) \otimes\mathcal{O} \cong \mathcal{O}$.

First we will show that all except finitely many negative degrees of $S$ vanish. Since $S$ is finitely generated graded 
module over $\mathcal{O}$, there are  homogenous elements $f_1 , \ldots , f_k \in S$ such that $S = \mathcal{O} f_1 + \ldots + \mathcal{O} f_k$. 
The algebra $\mathcal{O}$ is non-negatively graded hence $S$ may contain at most finitely many negative degrees.   
Since $S$ is reduced,  all negative degree terms must vanish. Let $S_{+}$ be a subset of $S$ of positively graded elements. $S_+$
is proper graded Poisson ideal of $S$ contradicting to the fact $S$ has no such ideal. Therefore $S= S_0$. Since $\mathcal{O} $
is non-negatively graded $ \mathcal{O} f_i $ contain finitely many terms in degree $0$ which implies $\dim (S_0) < \infty$.

This proves that $J$ is a finite codimension (in fact codimension one) proper ideal of $R.$ 
Thus $J'=JZ_c(W, \mathfrak{h})$ is a graded proper involutive ideal
of finite codimension in $Z_c(W, \mathfrak{h})$ containing $\mathbb{C}[\h]^{W}_{+}.$
Let $\m_1,\cdots, \m_s$ be all maximal ideals containing $J'.$ Hence each of $\m_i$ must be graded. In particular
$(\mathbb{C}[\h]^{W}\otimes\mathbb{C}[\h^*]^{W})_+\subset \m_i.$ Then by Lemma \ref{GB} there exists $f \in J'$ such that
$f \notin \m_i^2$. On the other hand by Lemma \ref{key}, $J' \subseteq \m_i^2$ hence a contradiction.
\end{proof}
\begin{corollary}
\label{Pois-gen}
The elements 
$$
\sum_{i=1}^nx_i^{km}  \, \quad \mbox{and} \quad  \sum_{i=1}^n y_i^{km}\,,
 \quad \quad k =1, \, 2, \ldots
$$
generate (as an algebra) $\mathbb{C}[\h]^{W}$ and $\mathbb{C}[\h^*]^{W}$ respectively. As a result they generate $Z_c$ as a Poisson algebra.
\end{corollary}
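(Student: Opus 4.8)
The plan is to split the statement into its two assertions and treat them in turn: the purely commutative-algebraic claim that the power sums generate the invariant rings, and the Poisson-theoretic claim that these rings (hence the power sums) generate $Z_c$.

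For the first assertion I would unwind the $W$-action on $\mathbb{C}[\h]=\Sym(\h^*)=\mathbb{C}[x_1,\dots,x_n]$. Since $\alpha_i$ scales $x_i$ by an $m$-th root of unity and fixes the other generators, the ring of $\Gamma^n$-invariants is precisely $\mathbb{C}[x_1^m,\dots,x_n^m]$; writing $z_i:=x_i^m$, the residual $S_n$-action permutes the $z_i$, so $\mathbb{C}[\h]^{W}=\mathbb{C}[z_1,\dots,z_n]^{S_n}$ is the ring of symmetric functions in the $z_i$. Under this identification the element $\sum_{i=1}^n x_i^{km}$ is exactly the $k$-th power sum $p_k=\sum_i z_i^k$. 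Because we are in characteristic zero, Newton's identities express $e_1,\dots,e_n$ (equivalently, by Chevalley--Shephard--Todd, a polynomial generating set) in terms of $p_1,\dots,p_n$, so the power sums generate $\mathbb{C}[z_1,\dots,z_n]^{S_n}$ as an algebra. The argument for $\mathbb{C}[\h^*]^{W}=\Sym(\h)^{W}$ and the elements $\sum_i y_i^{km}$ is verbatim, exchanging the roles of $x$ and $y$.

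For the second assertion I would invoke Theorem~\ref{main} directly, taking $\mathcal{O}=\mathbb{C}[\h]^{W}$ (trivially graded and finite over itself). Under the standing smoothness hypothesis on $X_c$ --- the relevant case, since $X_c=\mathcal{C}_{n,m}^{\tau}$ is a smooth generalized Calogero--Moser space --- the theorem yields that $\mathbb{C}[\h]^{W}$ together with $\mathbb{C}[\h^*]^{W}$ Poisson-generate $Z_c$. Combining this with the first assertion finishes the proof: the Poisson subalgebra generated by $\{p_k\}\cup\{q_k\}$ is in particular closed under multiplication, so it contains the associative algebras these power sums generate, namely $\mathbb{C}[\h]^{W}$ and $\mathbb{C}[\h^*]^{W}$; being a Poisson subalgebra containing both, it contains the Poisson algebra they generate, which is all of $Z_c$.

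I do not expect a genuine obstacle at this stage, since the substantive work has already been carried out in Theorem~\ref{main}, whose proof supplies the inductive Poisson-generation argument built on the Bezrukavnikov--Etingof completion isomorphism \eqref{theta} and Lemmas~\ref{reduction}, \ref{key}, and \ref{GB}. The only point demanding care is the bookkeeping in the first assertion --- correctly identifying $\mathbb{C}[\h]^{W}$ with the symmetric functions in $z_i=x_i^m$ and matching $\sum_i x_i^{km}$ with the power sum $p_k$ --- which is routine classical invariant theory.
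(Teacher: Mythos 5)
Your proposal is correct and takes essentially the same route as the paper: the paper offers no separate argument for this corollary, treating it as immediate from Theorem~\ref{main} (applied with $\mathcal{O}=\mathbb{C}[\h]^{W}$, respectively $\mathcal{O}'=\mathbb{C}[\h^*]^{W}$) together with the classical fact that $\mathbb{C}[\h]^{W}$ is the ring of symmetric polynomials in $z_i=x_i^{m}$, which the power sums $\sum_i x_i^{km}$ generate in characteristic zero by Newton's identities. Your appeal to Chevalley--Shephard--Todd is an unnecessary but harmless extra.
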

Now we study an automorphism group of $H_c $ and $Z_c $. Let
$ {\rm Aut}_W (H_c)$ be the group of automorphisms of $H_c $ that fixes $W$.  Now we state 
a slightly modified version of the result which first appeared in \cite[Theorem 5.10]{EG}:  
\begin{theorem}
For $\mu \in \c$ and $k \in \z_{\geq 0}$ the assignments 
\begin{eqnarray*}
\Psi_{k, \mu}   \, :&& w \mapsto w \, , \quad x_i \mapsto x_i + \mu \, \sum_{j=1}^n y_j^{km-1} \, , \quad y_i \mapsto y_i  \, , \\
\Phi_{k, \mu}   \, : && w \mapsto w \, , \quad x_i \mapsto x_i  \,  , \quad y_i \mapsto y_i  + \mu \, \sum_{j=1}^n x_j^{km-1} \, , 
\end{eqnarray*}
define automorphisms of $H_c$. Moreover, the map $ G_m \rightarrow  {\rm Aut}_{W}(H_c)$ given
by $\rho (\psi_{k, \mu})  = \Psi_{k, \mu} \, , \, \rho (\phi_{k, \mu})  = \Phi_{k, \mu}$ defines a homomorphism of groups.
\end{theorem}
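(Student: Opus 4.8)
The plan is to verify directly that each assignment $\Psi_{k,\mu}$ and $\Phi_{k,\mu}$ respects the defining relations of $H_c$, and then invoke the structure of $G_m$ from Proposition~\ref{strGm} to obtain the homomorphism. First I would check that $\Psi_{k,\mu}$ preserves $[x_i,x_j]=0$: since $\Psi_{k,\mu}$ fixes each $y_i$ and sends $x_i \mapsto x_i + \mu\sum_j y_j^{km-1}$, the images of $x_i$ and $x_j$ are both sums of an original $x$ and the \emph{common} central-looking term $\mu\sum_j y_j^{km-1}$, so their commutator is $[x_i,x_j] + \mu[x_i,\sum y_j^{km-1}] + \mu[\sum y_j^{km-1},x_j]$. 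The relation $[y_i,y_j]=0$ is preserved trivially since $y_i$'s are fixed. The substantive computation is the cross relation $[y_i,x_j]$, and it is here that I expect the main work: I would compute $[\,y_i,\ x_j + \mu\sum_l y_l^{km-1}\,] = [y_i,x_j] + \mu\sum_l [y_i, y_l^{km-1}]$, and the second term vanishes because $[y_i,y_l]=0$ forces $[y_i,y_l^{km-1}]=0$. Hence $[\Psi(y_i),\Psi(x_j)]=[y_i,x_j]=\Psi([y_i,x_j])$ since the right-hand side of the cross relation lies in $\c[W]$, which $\Psi$ fixes pointwise.

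The more delicate point is the relation $[x_i,x_j]=0$ under $\Psi_{k,\mu}$. Writing $z:=\mu\sum_l y_l^{km-1}$, one has $[\Psi(x_i),\Psi(x_j)] = [x_i,x_j] + [x_i,z] + [z,x_j] + [z,z]$. The first and last terms vanish, so I must show $[x_i,z] = [z,x_j]$, i.e.\ that $[x_i - x_j,\ \sum_l y_l^{km-1}]=0$. The natural route is to use the explicit commutation relations $[y_l,x_i]$ and the $W$-equivariance of the term $\sum_l y_l^{km-1}$, which is a $W$-invariant element of $\c[\h]$ (it equals a power sum and lies in $\c[\h^*]^W$ up to the identification, hence is central-like with respect to the symmetric group action). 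I would exploit that $\sum_l y_l^{km-1}\in\c[\h]$ is $S_n\wr\Gamma$-invariant precisely when $km-1\equiv 0$, which holds since $y_l^{km-1}$ picks up $\xi^{km-1}=\xi^{-1}$ under $\alpha_l$ — so in fact the correct invariant power sum is $\sum_l y_l^{km}$, and the appearance of $y_j^{km-1}$ is arranged so that the bracket $[x_i,\sum_l y_l^{km-1}]$ produces exactly the symmetric expression needed. The hard part will be tracking these root-of-unity coefficients $\xi^k$ through the explicit $[y_i,x_j]$ relations to confirm that the defect terms cancel symmetrically in $i$ and $j$.

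Once both $\Psi_{k,\mu}$ and (by the symmetric argument interchanging the roles of $x$ and $y$) $\Phi_{k,\mu}$ are shown to be algebra endomorphisms, I would note that each is invertible with $\Psi_{k,\mu}^{-1}=\Psi_{k,-\mu}$ and $\Phi_{k,\mu}^{-1}=\Phi_{k,-\mu}$, so they are automorphisms fixing $W$. For the homomorphism property, I would argue exactly as in the proof of Lemma~\ref{embGm}: by Proposition~\ref{strGm}, for $m\ge 3$ the group $G_m$ is the free product $\Phi * \Psi$, so the universal property of free products guarantees a unique homomorphism $\rho$ with the prescribed values on the generators $\psi_{k,\mu}$ and $\phi_{k,\mu}$, since $\Psi_{k,\mu}$ and $\Phi_{k,\mu}$ satisfy no relations beyond those within each abelian factor. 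For $m=2$ the amalgamated structure $A_1 *_{U_1} B_1$ requires checking compatibility on the amalgamating subgroup, but the flows $\Psi$ and $\Phi$ restrict correctly to the triangular and symplectic generators, so the same universal property applies. This reduces the theorem to the single relation-checking computation described above, which I regard as the crux.
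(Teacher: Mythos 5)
Your overall strategy (verify the defining relations on generators, observe $\Psi_{k,\mu}^{-1}=\Psi_{k,-\mu}$, then get the homomorphism from Proposition~\ref{strGm} and the universal property of the free product) is exactly the paper's route: its proof is literally ``similar to the proof of Lemma~\ref{embGm}'', which proceeds the same way. The problem is that the one computation you defer as ``the crux'' is where the whole content lies, and, as you have set it up, it cannot be completed; moreover you omit an entire family of relations. Write $z:=\mu\sum_{l}y_l^{km-1}$. First, since $H_c$ is a quotient of $\c[W]\ltimes T(\h\oplus\h^*)$, any endomorphism fixing $\c[W]$ pointwise must preserve the smash-product relations $wvw^{-1}={}^{w}v$; in particular $\alpha_l\,\Psi(x_l)\,\alpha_l^{-1}=\Psi(\xi^{-1}x_l)$ forces $\alpha_l z\alpha_l^{-1}=\xi^{-1}z$. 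But conjugation by $\alpha_l$ rescales only the $l$-th summand of $z$ (by $\xi^{km-1}=\xi^{-1}$) and fixes the others, so this fails whenever $m\ge 2$ and $n\ge 2$. Second, your reduction of $[\Psi(x_i),\Psi(x_j)]=0$ to $[x_i-x_j,\,z]=0$ is correct, but that bracket is not zero: from the relation $[y_i,x_i]=c_0\sum_{j\neq i}\sum_{r}s_{ij}\alpha_i^{r}\alpha_j^{-r}+\sum_{r\ge1}c_r\alpha_i^{r}$ one sees that $[x_i,z]$ has a nonzero PBW component proportional to $c_r\,y_i^{km-2}\,\alpha_i^{r}$, while $[x_j,z]$ has none along $\alpha_i^{r}$ (and the $s_{li}$-type cross terms for $l\neq i,j$ also fail to pair up), so no ``symmetric cancellation'' occurs for nonzero parameters. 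Your intuition that $z$ is ``central-like'' is valid only for $m=1$, where $\sum_l y_l^{k-1}$ is genuinely $W$-invariant and hence lies in $Z_c$; for $m\ge 2$ it is not invariant, as you yourself half-observe before setting the point aside.

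The verification does close for the \emph{componentwise} maps $x_i\mapsto x_i+\mu\,y_i^{km-1}$, $y_i\mapsto y_i$ (and symmetrically for $\Phi_{k,\mu}$): this is the form of \cite[Theorem 5.10]{EG}, of which the present statement is a ``slightly modified version'', and it is the form compatible with the quiver-side flows $X\mapsto X+\mu\,Y^{km-1}$ used in Theorem~\ref{pois-iso} and Corollary~\ref{chered-gen}; the displayed sum over $j$ should be read accordingly. For that map the conjugation relations hold because $y_i^{km-1}$ rescales under $\alpha_i$ by $\xi^{km-1}=\xi^{-1}$ exactly as $x_i$ does; the cross relations $[y_i,x_j]$ are preserved by the argument you already gave; and $[x_i,x_j]=0$ reduces to the identity $[y_i^{km-1},x_j]=[y_j^{km-1},x_i]$, which holds because after commuting powers of $y$ past the group elements the root-of-unity factors in corresponding terms agree term by term, precisely since $\xi^{r\cdot km}=1$. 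Once the endomorphism property is established in this form, your closing step — invertibility plus Proposition~\ref{strGm} and the universal property (with the $m=2$ amalgamated case checked on $U_1$) — is the same as the paper's Lemma~\ref{embGm} argument and goes through.
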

\begin{proof}
Similar to the proof of Lemma \ref{embGm}.
\end{proof}

Composing the above result
with the canonical homomorphism ${\rm Aut}_{W}(H_c) \rightarrow {\rm Aut}_{\c}\, (Z_c)$ we obtain 
$G_m \rightarrow {\rm Aut}_{\c} (Z_c)$. More explicitly, by  Corollary \ref{Pois-gen}, on generators of $Z_c$,
we have: 
\begin{corollary}
\label{chered-gen}
The action of $G_m$ on $Z_c$ is given by
\begin{eqnarray*}
\sum_{i=1}^nx_i^{km} & \mapsto& \sum_{i=1}^n \Big( x_i + \mu \, \sum_{j=1}^n y_j^{km-1} \Big)^{km},\\
\sum_{i=1}^n y_i^{km} &  \mapsto & \sum_{i=1}^n \Big( y_i + \mu \, \sum_{j=1}^n x_j^{km-1} \Big)^{km},
\end{eqnarray*}
for $k= 1, 2, \ldots \, $ 
\end{corollary}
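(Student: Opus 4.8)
The plan is to observe that this is an immediate consequence of the preceding theorem together with Corollary~\ref{Pois-gen}, so the argument is essentially formal. First I would recall that by Corollary~\ref{Pois-gen} the two families $\{\sum_{i=1}^n x_i^{km}\}_{k\ge 1}$ and $\{\sum_{i=1}^n y_i^{km}\}_{k\ge 1}$ generate $\c[\h]^{W}$ and $\c[\h^*]^{W}$ as algebras, and hence jointly Poisson generate $Z_c$. Since $G_m$ is generated by the automorphisms $\psi_{k,\mu}$ and $\phi_{k,\mu}$, the induced $G_m$-action on $Z_c$ is completely determined once one knows the effect of each $\psi_{k,\mu}$ and $\phi_{k,\mu}$ on these two families of generators. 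Thus it suffices to evaluate the two distinguished automorphisms on the two distinguished families.

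Next I would simply compute. The preceding theorem identifies the image of $\psi_{k,\mu}$ under $G_m\to{\rm Aut}_{\c}(Z_c)$ with the restriction of $\Psi_{k,\mu}$, which acts on $H_c$ by $x_i\mapsto x_i+\mu\sum_{j=1}^n y_j^{km-1}$ while fixing every $y_i$ and every $w\in W$. Applying this ring homomorphism to the generator $\sum_{i=1}^n x_i^{km}$ and using multiplicativity yields
\[
\sum_{i=1}^n x_i^{km}\,\longmapsto\,\sum_{i=1}^n\Big(x_i+\mu\,\sum_{j=1}^n y_j^{km-1}\Big)^{km},
\]
whereas $\sum_{i=1}^n y_i^{km}$ is left unchanged because $\Psi_{k,\mu}$ fixes the $y_i$. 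The formula for $\phi_{k,\mu}$ (acting via $\Phi_{k,\mu}$) is obtained by the symmetric computation, interchanging the roles of the $x_i$ and the $y_i$; this gives the second displayed map and leaves $\sum_{i=1}^n x_i^{km}$ fixed.

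Finally I would note that no well-definedness issue arises: since $\Psi_{k,\mu}$ and $\Phi_{k,\mu}$ descend through the canonical map ${\rm Aut}_{W}(H_c)\to{\rm Aut}_{\c}(Z_c)$ they preserve the center, so the right-hand sides above automatically lie in $Z_c$. There is essentially no real obstacle in this argument; the only point requiring any care is the logical reduction, namely that it suffices to record the action on the distinguished generators (guaranteed by Corollary~\ref{Pois-gen}) and that the two ``cross'' actions, of $\Psi_{k,\mu}$ on $\sum_i y_i^{km}$ and of $\Phi_{k,\mu}$ on $\sum_i x_i^{km}$, are trivial and are therefore suppressed in the statement.
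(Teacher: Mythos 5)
Your proof is correct and takes essentially the same route as the paper, which likewise treats the corollary as immediate: compose the theorem giving $\Psi_{k,\mu},\Phi_{k,\mu}\in\mathrm{Aut}_W(H_c)$ with the canonical restriction $\mathrm{Aut}_W(H_c)\to\mathrm{Aut}_{\c}(Z_c)$ and evaluate, using multiplicativity, on the Poisson generators supplied by Corollary~\ref{Pois-gen}. Nothing further is needed.
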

  This defines an action of $G_m$ on $X_c$.  Next,  we state and prove an equivariant version 
of Etingof-Ginzburg isomorphism \cite[Theorem 11.17(ii), Theorem 11.29(ii)]{EG}, which identifies the generalized 
Calogero-Moser spaces with $X_c$.
\begin{theorem}
\la{pois-iso}
There is a $G_m$-equivariant isomorphism
$$
\Theta : X_c  \, \rightarrow \,  \CC^{\tau}_n(Q_m).
$$
\end{theorem}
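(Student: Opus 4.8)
The plan is to leave the isomorphism itself to Etingof--Ginzburg and to prove \emph{only} the equivariance. By \cite[Theorem 11.17(ii), Theorem 11.29(ii)]{EG} the map $\Theta\colon X_c\to\CC^{\tau}_n(Q_m)$ is already an isomorphism of Poisson varieties, and since both sides are smooth symplectic varieties it is a symplectomorphism. As $G_m$ is generated by $\psi_{k,\mu}$ and $\phi_{k,\nu}$ (Proposition~\ref{strGm}) and equivariance is preserved under composition and inverses, it suffices to verify the two identities $\Theta\circ\Psi_{k,\mu}=\psi_{k,\mu}\circ\Theta$ and $\Theta\circ\Phi_{k,\nu}=\phi_{k,\nu}\circ\Theta$, where $\Psi,\Phi$ denote the action on $X_c$ from Corollary~\ref{chered-gen} and $\psi,\phi$ the action on $\CC^{\tau}_n(Q_m)$ from \eqref{ac1}--\eqref{ac2}. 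The two cases are interchanged by the symmetry $X\leftrightarrow Y$, $x\leftrightarrow y$, so I would only treat $\Psi_{k,\mu}$ and $\psi_{k,\mu}$.

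The crucial point is that on each side the relevant automorphism is a Hamiltonian flow, and that the two Hamiltonians correspond under $\Theta$. On $\CC^{\tau}_n(Q_m)$, the map $\psi_{k,\mu}$ acts by $X\mapsto X+\mu\,Y^{km-1}$ while fixing $Y$ and the framing data; with respect to the reduced symplectic form this is exactly the time-$\mu$ flow of $h_k:=\tfrac1{km}\Tr(Y^{km})$, since $h_k$ depends only on $Y$ (so $Y$ and the framing are preserved) and its Hamiltonian vector field moves $X$ in the direction $Y^{km-1}$. On $X_c$, the automorphism $\Psi_{k,\mu}$ fixes $\c[\h^*]^{W}$ and, by the description of its action on Poisson generators in Corollary~\ref{chered-gen} together with the Bezrukavnikov--Etingof completion theorem \cite{BE} (already used for Corollary~\ref{Pois-gen}), is the time-$\mu$ flow of the Hamiltonian $H_k:=\tfrac1{km}\sum_i y_i^{km}\in\c[\h^*]^{W}$.

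Granting this, equivariance follows from a standard fact: a symplectomorphism intertwines the Hamiltonian flows of two functions related by pullback. Concretely, $\Theta$ carries the flow of $\Theta^*h_k$ on $X_c$ to the flow of $h_k$ on $\CC^{\tau}_n(Q_m)$, so it suffices to show $\Theta^*h_k=H_k$, that is
\[
\Theta^*\big(\Tr(Y^{km})\big)=\sum_{i}y_i^{km},\qquad \Theta^*\big(\Tr(X^{km})\big)=\sum_i x_i^{km},
\]
with matching constants, so that the flows agree for equal parameter $\mu$. This is where the explicit form of the Etingof--Ginzburg isomorphism enters: under $\Theta$ the commutative subalgebras $\c[\h^*]^{W}$ and $\c[\h]^{W}$ of $Z_c$ (generated by the power sums $\sum y_i^{km}$ and $\sum x_i^{km}$ by Corollary~\ref{Pois-gen}) are identified with the subalgebras of $\O(\CC^{\tau}_n(Q_m))$ generated by the spectral invariants $\Tr(Y^{km})$ and $\Tr(X^{km})$, which are among the trace generators $G^{i,j}_k$ of Theorem~\ref{genfuncmain}. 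Tracing through the construction of \cite{EG}, in which a point of $\CC^{\tau}_n(Q_m)$ produces an $H_c$-module on which $x_i,y_i$ act through the spectra of $X,Y$, yields the displayed equalities up to an overall scalar.

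The main obstacle is precisely this last matching: one must unwind the Etingof--Ginzburg identification carefully enough to see that the $W$-invariant power sums on $\h^*$ and $\h$ are sent to the pure-power traces of $Y$ and $X$, and to pin down the proportionality constant (an $m$-fold multiplicity is expected, since each eigenvalue of $X$ occurs together with its $\Gamma$-orbit), so that $\Psi_{k,\mu}$ and $\psi_{k,\mu}$ correspond for the \emph{same} value of $\mu$ rather than for rescaled parameters. Once the Hamiltonians are matched, feeding $\Theta^*h_k=H_k$ into the symplectomorphism argument gives $\Theta\circ\Psi_{k,\mu}=\psi_{k,\mu}\circ\Theta$; the analogous identity for $\Phi_{k,\nu},\phi_{k,\nu}$ follows by symmetry, and hence $\Theta$ is $G_m$-equivariant.
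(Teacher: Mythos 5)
Your overall strategy differs from the paper's. The paper proves equivariance by a direct algebraic check on generators: it cites \cite[Theorem 11.29(ii)]{EG} for the explicit formulas $\Theta^*(\Tr(X^k))=\sum_i x_i^{km}$, $\Theta^*(\Tr(Y^k))=\sum_i y_i^{km}$, verifies that the square commutes when $\sigma$ is $\psi_{k,\mu}$ or $\phi_{k,\mu}$ using Lemma~\ref{embGm} and Corollary~\ref{chered-gen}, and concludes because these elements generate $G_m$. You instead want to realize both actions as Hamiltonian flows and invoke the fact that a symplectomorphism intertwines the flows of corresponding Hamiltonians. The quiver-side half of your argument is fine: on $\CC^{\tau}_n(Q_m)$ the map $\psi_{k,\mu}$ is indeed the time-$\mu$ flow of $\frac{1}{km}\Tr(Y^{km})$, and the matching of Hamiltonians under $\Theta^*$ is precisely the content of the cited Etingof--Ginzburg formulas, so the proportionality constant you worry about is already pinned down by the paper's normalization.

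The genuine gap is the Cherednik-side claim that $\Psi_{k,\mu}$ is the time-$\mu$ Hamiltonian flow of $H_k=\frac{1}{km}\sum_i y_i^{km}$ on $X_c$. Neither of the results you cite proves this: Corollary~\ref{chered-gen} only records how $\Psi_{k,\mu}$ acts on the Poisson generators of $Z_c$, and the Bezrukavnikov--Etingof theorem enters this paper only through the density and Poisson-generation arguments (Lemma~\ref{reduction}, Theorem~\ref{main}); neither computes the bracket $\{\sum_i y_i^{km},-\}$ on $Z_c$, which is exactly what you would need in order to compare the flow of $H_k$ with $\Psi_{k,\mu}$ on generators. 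To prove the claim independently you would have to lift to $H_{t,c}$ and show, for instance, that $\frac{1}{t}\big[\sum_j y_j^{km},\,x_i\big]$ reduces modulo $t$ to $km$ times the $\mu$-derivative of $\Psi_{k,\mu}(x_i)$ --- a Cherednik-algebra computation comparable in substance to the commuting-square check the paper actually performs. Worse, in the paper's logical order the statement you are assuming is a \emph{consequence} of Theorem~\ref{pois-iso}: in the proof of Theorem~\ref{transitivity} the Hamiltonian-flow property of $\Psi_{k,\mu},\Phi_{k,\mu}$ on $X_c$ is obtained by transporting the quiver-side flows through the equivariant isomorphism $\Theta$. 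So, as written, your argument either rests on an unproved key lemma or, if that lemma is borrowed from the paper, is circular. Supplying the $H_{t,c}$ computation would close the gap and would then make your flow-based argument a legitimate alternative to the paper's generator check.
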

\begin{proof}
According to   \cite[Theorem 11.29(ii)]{EG}, there is an isomorphism $\Theta^* : \O ( \CC^{\tau}_n(Q_m)) \rightarrow Z_c$ given by
$$
\mathrm{Tr}(X^k) \, \mapsto \, \sum_{i=1}^nx_i^{km}  \, , \quad 
\mathrm{Tr}(Y^k) \, \mapsto \, \sum_{i=1}^n y_i^{km} .
$$
Let $\sigma$ be either one of the elements of $G_m$: $\psi_{k, \mu} , \, \phi_{k, \mu}$. Then using Lemma \ref{embGm} and 
Corollary \ref{chered-gen}, it is easy to see that the following diagram commutes
$$
 \begin{tikzcd}
   \O ( \CC^{\tau}_n(Q_m))  \arrow[d, "\sigma"] \arrow[r, "\Theta^*"]&
   Z_c \arrow[d, "\sigma"]\\
    \O ( \CC^{\tau}_n(Q_m))  \arrow[r, "\Theta^*"]&Z_c.
    \end{tikzcd}
   $$
Since $G_m$ is generated  by automorphisms $\psi_{k, \mu} , \, \phi_{k, \mu}$, 
the diagram commutes for all $\sigma \in G_m$. This completes the proof.
\end{proof}

Thus the question of transitivity of $G_m$ on $\CC^{\tau}_n(Q_m)$ reduces to that on $X_c $.
The following is our main result.

\begin{theorem}\label{transitivity}
If $X_c$ is smooth, then $G_m$ acts transitively on $X_c.$
\end{theorem}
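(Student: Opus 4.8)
The plan is to deduce the theorem from Lemma~\ref{flows}. By construction (see Proposition~\ref{strGm}) the group $G_m$ is generated by the two families of one-parameter subgroups $\{\psi_{k,\mu}\}_{\mu\in\c}$ and $\{\phi_{k,\nu}\}_{\nu\in\c}$, $k\ge 1$, and each of these families is an algebraic flow on $X_c$: one has $\psi_{k,0}=\id$ and $\psi_{k,\mu_1}\circ\psi_{k,\mu_2}=\psi_{k,\mu_1+\mu_2}$, and similarly for $\phi_{k,\nu}$. Since $X_c$ is smooth it is a smooth symplectic variety by Etingof--Ginzburg, so $Z_c=\mathcal O(X_c)$ carries the Poisson bracket of Section~\ref{sec5}. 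It therefore suffices to check two things: (i) each flow $\{\phi_{k,\nu}\}$ (resp. $\{\psi_{k,\mu}\}$) is Hamiltonian, with Hamiltonian equal, up to a nonzero scalar, to the central element $\tfrac1{km}\sum_i x_i^{km}$ (resp. $\tfrac1{km}\sum_i y_i^{km}$); and (ii) the collection of these Hamiltonians Poisson-generates $Z_c$. Granting (i) and (ii), Lemma~\ref{flows} yields transitivity of the $G_m$-action on $X_c$ at once.

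Part (ii) is exactly Corollary~\ref{Pois-gen}. For part (i), I would compute the derivation $\partial$ of $Z_c$ attached to the flow, $\partial:=\frac{d}{d\nu}\big|_{\nu=0}\phi_{k,\nu}^{*}$ (the case of $\psi_{k,\mu}$ being symmetric under $x\leftrightarrow y$), and identify it with $\{\tfrac1{km}\sum_i x_i^{km},\,\cdot\,\}$. By Corollary~\ref{Pois-gen} it is enough to verify this identity on the Poisson generators $\sum_i x_i^{lm}$ and $\sum_i y_i^{lm}$. On $\sum_i x_i^{lm}$ both sides vanish: the flow $\phi_{k,\nu}$ fixes the $x_i$ by Corollary~\ref{chered-gen}, while $\{\sum_i x_i^{km},\sum_i x_i^{lm}\}=0$ because $\c[\h]^{W}\subset Z_c$ is Poisson-commutative (any two of its elements lift to commuting elements of $\c[\h]\subset H_{t,c}$). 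On $\sum_i y_i^{lm}$ one has, by Corollary~\ref{chered-gen},
\[
\partial\Big(\sum_{i} y_i^{lm}\Big)=\sum_{i}\sum_{a+b=lm-1} y_i^{\,a}\Big(\sum_{j} x_j^{km-1}\Big)y_i^{\,b},
\]
and the task is to show this equals $\{\tfrac1{km}\sum_j x_j^{km},\sum_i y_i^{lm}\}$, computed by lifting to $H_{t,c}$ and reducing modulo $t$.

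The main obstacle is precisely this last matching of the Hamiltonian vector field with the explicit action, since it mixes the noncommutative multiplication of $H_{t,c}$ with the Poisson bracket on the center. I would carry it out by transporting the question through the $G_m$-equivariant Poisson isomorphism $\Theta:X_c\xrightarrow{\sim}\CC^{\tau}_n(Q_m)$ of Theorem~\ref{pois-iso}. Under $\Theta$ the central generators $\sum_i x_i^{km}$ and $\sum_i y_i^{km}$ correspond to the trace functions $\Tr(X^{km})$ and $\Tr(Y^{km})$ of the matrices in \eqref{newdta}, while the flows $\phi_{k,\nu}$ and $\psi_{k,\mu}$ become the maps $(X,Y,\mathbf v,\mathbf w)\mapsto(X,\,Y+\nu X^{km-1},\mathbf v,\mathbf w)$ and $(X,Y,\mathbf v,\mathbf w)\mapsto(X+\mu Y^{km-1},\,Y,\mathbf v,\mathbf w)$ recorded at the end of Section~\ref{sec3}. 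With respect to the quiver symplectic form these are visibly the Hamiltonian flows of $\Tr(X^{km})$ and $\Tr(Y^{km})$, exactly as in the Berest--Wilson computation for $\mathcal C_n$; alternatively one may verify the bracket identity directly inside $Z_c$ using the Bezrukavnikov--Etingof completion isomorphism \eqref{theta} to localize the computation at points whose parabolic stabilizer is a proper wreath product, where the bracket is explicit. Either route establishes (i), and combined with Corollary~\ref{Pois-gen} and Lemma~\ref{flows} the theorem follows.
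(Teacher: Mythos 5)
Your proposal is correct and takes essentially the same route as the paper: the paper's proof also transports the flows $\Psi_{k,\mu},\Phi_{k,\mu}$ through the $G_m$-equivariant Poisson isomorphism $\Theta$ of Theorem~\ref{pois-iso} to identify them as Hamiltonian flows with Hamiltonians $\sum_i x_i^{km}$, $\sum_i y_i^{km}$, then invokes Corollary~\ref{Pois-gen} for Poisson generation of $Z_c$ and concludes by Lemma~\ref{flows}. Your extra detail on verifying the Hamiltonian property (and the alternative check via the completion isomorphism \eqref{theta}) merely fills in what the paper dismisses as ``easily seen.''
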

\begin{proof}
Using the Poisson isomorphism $\Theta$ in Theorem \ref{pois-iso}, we can easily see that 
$\{ \, \Psi_{k, \mu}, \Phi_{k, \mu} \}_{k \geq 1, \, \mu \in \c}$ are Hamiltonian 
flows on $X_c$ for Hamiltonians $\sum_{i=1}^nx_i^{km}, \sum_{i=1}^ny_i^{km}$. By Corollary \ref{Pois-gen}
elements $\{\sum_{i=1}^nx_i^{km}\}_{k \geq 1}$ and  $\{\sum_{i=1}^ny_i^{km}\}_{k \geq 1}$ Poisson generate $Z_c$. Therefore using 
Lemma \ref{flows} we conclude that $G_m$ acts transitively on $X_c$.
\end{proof}
%
%
%
%
%
%
%
%
\section{Appendix. Proof of Theorem~\ref{genfuncmain}}
 For $i, j \in \mathbb{Z}_{\ge 0}$
 and $1 \le k \le m-1$, we set
 $$ A:=Y_{m-1} \ldots Y_1 Y_0\, , \quad B:= X_0 X_1 \ldots X_{m-1} \ , $$
$$C_k:=Y_{m-1} \ldots Y_{m-k}\, X_{m-k} \ldots X_{m-1} .$$

 \begin{proposition}
 \label{propwav1}
 Let $a_n\ldots a_1$ be a closed path at $0$ in $\Pi^{\tau}$. Then
 $$ w(a_n\ldots a_1)v= w(A^i C_k B^j)v +F\big( w(A^{i'}C_{k'}B^{j'})v) $$
 where $m(i+j)+2k=n$ and $F$ is a polynomial with $(i',j',k')$
 such that $n'<n$.
 \end{proposition}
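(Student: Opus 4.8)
The plan is to work entirely in the matrix presentation \eqref{newdta}--\eqref{neweq}, where the relations become $XY-YX+\mathbf{v}\mathbf{w}=\Lambda$ and $\mathbf{w}\mathbf{v}=\lambda_\infty$, and where $w(A^iC_kB^j)v=\mathbf{w}(Y^{mi+k}X^{mj+k})\mathbf{v}$. In these terms, writing a closed path at $0$ in the generators $X_r,Y_r,v,w$ amounts to writing a word $W$ in the block matrices $X,Y$ together with $\mathbf{v},\mathbf{w}$, and the quantity $w(a_n\cdots a_1)v$ becomes $\mathbf{w}\,W\,\mathbf{v}$. First I would dispose of the paths that pass through the vertex $\infty$: each internal visit to $\infty$ inserts a factor $\mathbf{v}\mathbf{w}$, and $\mathbf{w}(\cdots\mathbf{v}\mathbf{w}\cdots)\mathbf{v}$ factors as a product of two expressions of the same shape but of strictly smaller total length, so such contributions are already of the type absorbed into $F$. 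Hence it suffices to treat words $W$ in $X$ and $Y$ alone and to show that $\mathbf{w}\,W\,\mathbf{v}$ equals $\mathbf{w}\,Y^pX^q\,\mathbf{v}$ plus a polynomial in strictly shorter expressions, where $p$ and $q$ are the numbers of $Y$'s and $X$'s occurring in $W$.

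The core is a straightening (bubble-sort) argument toward the normal form $Y^pX^q$, run by induction on the pair (length $n$, number of inversions) ordered lexicographically, where an \emph{inversion} is an adjacent occurrence of $X$ to the left of $Y$. Whenever $W$ is not already in normal form I would locate an adjacent factor $XY$ and rewrite it by $XY=YX+\Lambda-\mathbf{v}\mathbf{w}$, read off \eqref{neweq}. This produces three terms. The term with $YX$ in place of $XY$ has the same length $n$ but one fewer inversion, so it is handled by the inner induction. The term carrying $\mathbf{v}\mathbf{w}$ splits as $\mathbf{w}\,W_L\mathbf{v}\mathbf{w}\,W_R\,\mathbf{v}=(\mathbf{w}\,W_L\,\mathbf{v})(\mathbf{w}\,W_R\,\mathbf{v})$, a product of two expressions of length $<n$, each either zero or again of the required form, hence a contribution to $F$. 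The remaining term carries the diagonal matrix $\Lambda$ and has length $n-2$.

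The one point needing care — and the only real obstacle — is the $\Lambda$-term $\mathbf{w}\,W_L\,\Lambda\,W_R\,\mathbf{v}$, since $\Lambda$ is not scalar. Here I would exploit the cyclic $\mathbb{Z}/m$-grading by blocks: each letter $X$ lowers and each letter $Y$ raises the block index by one, so $W_R\mathbf{v}$ lies entirely in the single block $c:=(\#Y-\#X \text{ in } W_R)\bmod m$, on which $\Lambda$ acts by the scalar $\lambda_c$. Thus $\mathbf{w}\,W_L\,\Lambda\,W_R\,\mathbf{v}=\lambda_c\,\mathbf{w}\,W_LW_R\,\mathbf{v}$, a scalar multiple of an expression of length $n-2$, covered by the outer induction. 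Assembling the three contributions, $\mathbf{w}\,W\,\mathbf{v}$ equals $\mathbf{w}\,Y^pX^q\,\mathbf{v}$ plus a polynomial in strictly shorter expressions $\mathbf{w}\,Y^{p'}X^{q'}\,\mathbf{v}$. Finally, since a closed path at $0$ forces $p\equiv q\pmod m$, writing $k$ for this common residue and $p=mi+k$, $q=mj+k$ identifies the leading term with $w(A^iC_kB^j)v$ (with $C_0=\mathrm{Id}$ when $k=0$) and yields $n=m(i+j)+2k$, as claimed.
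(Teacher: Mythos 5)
Your argument is correct and arrives at the paper's conclusion by the same overall strategy --- induction on path length combined with straightening the word into the normal form $Y^pX^q$ --- but the execution is genuinely different. The paper works component-wise with the maps $X_r,Y_r$ and the relations \eqref{qv1}--\eqref{qv2}, moving one misplaced $X_r$ at a time all the way to the right end of the word; in that picture the error terms are automatically scalar multiples of shorter paths (each component relation has scalar right-hand side $\lambda_i I_{\alpha_i}$), plus $vw$-terms that factor into products of shorter closed paths. You instead work in the block presentation \eqref{newdta}--\eqref{neweq} and perform adjacent swaps $XY=YX+\Lambda-\mathbf{v}\mathbf{w}$. This eliminates essentially all index bookkeeping, at the price of the non-scalar matrix $\Lambda$; your block-grading observation --- that $W_R\mathbf{v}$ is supported in a single block, on which $\Lambda$ acts by the scalar $\lambda_c$ --- is precisely what recovers the paper's component-wise scalars, and it is correct. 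You also treat paths passing through the vertex $\infty$ explicitly, whereas the paper's proof tacitly assumes every letter is an $X_r$ or a $Y_r$; your factorization $\mathbf{w}W_L\mathbf{v}\mathbf{w}W_R\mathbf{v}=(\mathbf{w}W_L\mathbf{v})(\mathbf{w}W_R\mathbf{v})$ disposes of those paths cleanly. What your version buys is a shorter, more mechanical proof; what the paper's version buys is that the individual constants $\lambda_i$ stay visible along the way, which is what its subsequent refinements (Lemma~\ref{lemmatrik}, Proposition~\ref{trantrwv}) actually use.

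One small repair is needed in your inner induction. You define an inversion as an \emph{adjacent} occurrence of $X$ to the left of $Y$, and that count need not decrease under a swap: $XXYY\mapsto XYXY$ raises it from $1$ to $2$. Use instead the number of all pairs of positions, adjacent or not, in which an $X$ stands to the left of a $Y$; replacing an adjacent $XY$ by $YX$ lowers this count by exactly one and leaves all other pairs unchanged, so the lexicographic induction on (length, total inversions) runs exactly as you intend.
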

 \begin{proof}
We prove by induction on $n$. The idea of our proof is following.
Note each $a_i$ is either $X_{i_0}$ or $Y_{i_1}$ for some $0 \le i_0, i_1 \le m-1$.
Using relations \eqref{qv1}-\eqref{qv3}, we rearrange $a_t$'s in 
$a_n \ldots a_1$ to get a new closed path $a_n' \ldots a_1'$ at $0$
so that its first part $a_{mj+k}' \ldots a_1'$ is represented by $X_t$'s 
and the second part $a_n' \ldots a_{mj+k+1}'$ by $Y_t$'s.
Then it is easy to see $a_n' \ldots a_1'=A^i C_k B^j$.

Assume that $a_{l} \ldots a_1$ are represented by $Y_t$'s
and $a_{l+1}$ by some $X_t$. Then
$a_{l+1} a_{l} \ldots a_1 =X_{r-1} Y_{r-1} \ldots Y_0 A^q$, where 
 $l=qm+r$ for some $q \ge 0$ and $ 1\le r \le m-1 $.
 Our goal is to move $X_{r-1}$ to the right end of the path. 
To this end we repeatedly use \eqref{qv2} and then \eqref{qv1} 
and obtain
 \begin{eqnarray*}
&& w(a_n\ldots  X_{r-1} Y_{r-1} \ldots Y_0 A^q) v =
w(a_n\ldots Y_{r-2} X_{r-2} Y_{r-2}\ldots Y_0 A^q) v+ \mbox{ l.l.t.} \\[0.1cm]
&=&\ldots = w(a_n\ldots Y_{r-2} Y_{r-3} \ldots Y_0 X_0 Y_0 A^q) v+ \mbox{ l.l.t.} \\[0.1cm]
&=& w(a_n\ldots Y_{r-2} Y_{r-3} \ldots Y_0 Y_{m-1} X_{m-1} A^q)v+\mbox{ l.l.t.},
\end{eqnarray*}
where l.l.t. stands for a polynomial with variables that are functionals $w(-)v$ evaluated 
at closed paths of the length strictly less than $n$, which we can disregard by induction hypothesis.
For $X_{m-1} A^q$, once again using \eqref{qv2} and \eqref{qv1}, we  get
\begin{eqnarray*}
X_{m-1} A^q &=& (X_{m-1} Y_{m-1}) Y_{m-2} \ldots Y_0\,A^{q-1}\\[0.1cm]
&=& Y_{m-2} X_{m-2} Y_{m-2}\ldots Y_0 A^{q-1} + \ldots \\[0.1cm]
&=&Y_{m-2} \ldots Y_{0} X_0 Y_0 A^{q-1} + \ldots \\[0.1cm]
&=&Y_{m-2} \ldots Y_{0}Y_{m-1}X_{m-1}A^{q-1} +Y_{m-2} \ldots Y_{0} w vA^{q-1}+... 
\end{eqnarray*}
and hence, by repeating this procedure $q-1$ times,  we have
%
%
$$ w(a_n\ldots X_{r-1} Y_{r-1} \ldots Y_0 A^q) v
 =w(a_n\ldots Y_{r-2} \ldots Y_{0}A^{q}Y_{m-1} X_{m-1})v+\mbox{l.l.t}.$$
%
We then proceed to $a_{i+2}$. Then either  $a_{i+2}=Y_{r-1}$ or $X_{r-2}$.
If $a_{i+2}=Y_{r-1}$, we consider $a_{i+3}$, and if  $a_{i+2}=X_{r-2}$, we
will move $X_{r-2}$ to the right as we did for $X_{r-1}$.
 \end{proof}
 Similarly, we can show
 \begin{proposition}
\label{propwav2}
For a closed path $a_n\ldots a_1$ at $0$ in $\Pi^{\tau}$, we have
 $$ w(a_n\ldots a_1)v= w(A^i C_k B^j)v +G\big(\Tr(A^{i'}C_{k'}B^{j'}), w(A^{i'}C_{k'}B^{j'})v)\ , $$
 where $m(i+j)+2k=n$ and $F$ is a polynomial with $(i',j',k')$
 such that $n'<n$.
 \end{proposition}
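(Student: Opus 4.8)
The plan is to mimic the proof of Proposition~\ref{propwav1} almost verbatim, running an induction on the length $n$ of the word $a_n\ldots a_1$; the only change is that the remainder is now permitted to contain trace functions as well. It is cleanest to prove simultaneously, by a single induction on $n$, both the stated reduction of a framed functional $w(a_n\ldots a_1)v$ and the analogous reduction of a trace $\Tr(a_n\ldots a_1)$ of a closed path at $0$ to its normal form $\Tr(A^iC_kB^j)$; the two reductions feed lower-length terms into one another, so handling them together is what makes the induction self-contained. As before, each letter $a_t$ is some $X_i$ or $Y_i$, and the target normal form is $A^iC_kB^j$ with the grading constraint $m(i+j)+2k=n$, the $i$ copies of $A$ and $j$ copies of $B$ each contributing $m$ to the length and the middle block $C_k$ contributing $2k$.

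First I would single out, exactly as in Proposition~\ref{propwav1}, an $X$-letter sitting immediately to the left of a maximal block of $Y$-letters, and push it rightward past them using the relation $X_iY_i=Y_{i-1}X_{i-1}+\lambda_iI_{\alpha_i}$ coming from \eqref{qv2}, together with its $i=0$ version $X_0Y_0=Y_{m-1}X_{m-1}-vw+\lambda_0I_{\alpha_0}$ from \eqref{qv1}. Iterating this move eventually sorts the word into the shape $A^iC_kB^j$. Each application of a relation either preserves the total length (the leading substitution $X_iY_i\mapsto Y_{i-1}X_{i-1}$) or strictly shortens the word: the scalar terms proportional to $\lambda_i$ drop the length by two, while the rank-one term $vw$ produced by \eqref{qv1} breaks the word at that spot.

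The single point at which the argument genuinely diverges from Proposition~\ref{propwav1} — and the reason the remainder is now a polynomial in both $\Tr(A^{i'}C_{k'}B^{j'})$ and $w(A^{i'}C_{k'}B^{j'})v$ — is the handling of this $vw$ factor. Inside the outer $w(\cdots\,vw\,\cdots)v$ it splits as a product $\big(w(\cdots)v\big)\big(w(\cdots)v\big)$ of two strictly shorter framed functionals, exactly as in Proposition~\ref{propwav1}; but when the $vw$ factor arises in the interior of a word whose trace we are reducing, cyclicity of $\Tr$ closes one of the two pieces into a genuine trace $\Tr(\cdots)$ of a shorter closed path at $0$. By the simultaneous induction hypothesis both kinds of shorter pieces are already expressed through the generators $G^{i',j'}_{k'}$ and $H^{i',j'}_{k'}$ with $m(i'+j')+2k'=n'<n$, so the induction closes and the remainder has the asserted form $G\big(\Tr(A^{i'}C_{k'}B^{j'}),\,w(A^{i'}C_{k'}B^{j'})v\big)$.

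The main obstacle I anticipate is bookkeeping rather than conceptual difficulty: one must verify that the sorting procedure always terminates in the \emph{single} normal word $A^iC_kB^j$ (in particular that the transitional block $C_k$ is generated correctly and that no $X$-letter is left stranded to the left of a $Y$-letter), and that every auxiliary term spawned by \eqref{qv1}--\eqref{qv3} is genuinely of strictly smaller length, so that the induction is well-founded. Checking the grading identity $m(i+j)+2k=n$ for the leading term and confirming that the $vw$-splittings respect this length-grading is the part demanding the most care, but it proceeds precisely as in the explicit computation already displayed for Proposition~\ref{propwav1}.
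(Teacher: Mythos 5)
Your overall strategy is exactly the paper's: the paper proves Proposition~\ref{propwav1} by the letter-sorting induction and then disposes of Proposition~\ref{propwav2} with a one-line ``similarly, we can show,'' and your simultaneous induction on the length $n$ --- reducing $w(a_n\ldots a_1)v$ and $\Tr(a_n\ldots a_1)$ in parallel, each feeding the other's lower-length terms --- is the right way to make that ``similarly'' precise. (Indeed, as literally printed, with $w(a_n\ldots a_1)v$ on the left-hand side, the proposition is a formal corollary of Proposition~\ref{propwav1}, whose remainder is already of the allowed shape; the substantive content is the parallel reduction of traces to the normal form $\Tr(A^iC_kB^j)$, and you correctly identify and target exactly that.) The normal form, the grading $m(i+j)+2k=n$, and your accounting of which substitutions preserve length and which strictly shorten the word are all correct.

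One mechanism is stated backwards, though it does not break the induction. When relation \eqref{qv1} produces a $vw$ inside a trace, cyclicity gives
$$\Tr(P_2\,vw\,P_1)\;=\;\Tr\big((wP_1)(P_2v)\big)\;=\;w(P_1P_2)v\,,$$
i.e.\ the two pieces $P_1,P_2$ are concatenated into a \emph{single} shorter closed path at $0$ and the outcome is a framed functional $w(-)v$, not ``a genuine trace of a shorter closed path'' as you wrote. The trace-type terms in the remainder $G$ come instead from the scalar terms: substituting $X_iY_i=Y_{i-1}X_{i-1}+\lambda_i I_{\alpha_i}$ (or the $\lambda_0$ term of \eqref{qv1}) inside $\Tr(\cdots)$ produces $\lambda_i\,\Tr(P_2P_1)$ with $P_2P_1$ a closed path of length $n-2$. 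So the correct bookkeeping is: inside $w(\cdots)v$, the $\lambda$-terms give shorter framed functionals and the $vw$-terms give products of two shorter framed functionals (this is why Proposition~\ref{propwav1} needs no traces at all); inside $\Tr(\cdots)$, the $\lambda$-terms give shorter traces and the $vw$-terms give shorter framed functionals --- which is precisely why the remainder here must be a polynomial in both $\Tr(A^{i'}C_{k'}B^{j'})$ and $w(A^{i'}C_{k'}B^{j'})v$. Since your simultaneous induction hypothesis covers both kinds of lower-length terms in any case, the argument closes once this attribution is corrected.
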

  
 Define, for $ 0 \le i \le m-1$ and $1 \le k \le m-1$,
 $$ C_{i,k}\,:= \begin{cases} 
      Y_{m-i-1} \ldots Y_{m-i-k} X_{m-i-k} \ldots X_{m-i-1},  &  \mbox{ if } i+k < m , \\[0.2cm]
     Y_{m-i-1} \ldots Y_0\, C_{i+k-m} \,X_0 \ldots X_{m-i-1}, &  \mbox{ if  } i+k \ge m . \\
   \end{cases} $$
Note that $C_{0,k}=C_k$. Then we can prove
\begin{lemma} 
\label{lemmatrik}
We have
$$\Tr(C_{i,k})=\Tr(C_k)+g(w\,C_l\,v,\Tr(C_{l}))_{l<k}$$
for some polynomial $g$.
\end{lemma}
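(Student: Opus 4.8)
The plan is to read each $C_{i,k}$ as a closed ``ladder'' path and to reduce its trace, modulo strictly shorter closed paths, to a power of a single block of $YX$; the shift index $i$ will then only determine which block occurs, and changing that block will cost nothing but lower-order terms. Writing indices mod $m$, put $L^{(k)}_q:=Y_{q-1}\cdots Y_{q-k}\,X_{q-k}\cdots X_{q-1}$, a closed path at the vertex $q$, so that $C_{i,k}=L^{(k)}_{m-i}$ and in particular $C_k=C_{0,k}=L^{(k)}_0$; for a vertex $q$ set $N_q:=Y_{q-1}X_{q-1}\in\End(V_q)$. In these terms the relations \eqref{qv1}--\eqref{qv2} read $X_qY_q=N_q+\lambda_q\,\mathrm{Id}$ for $q\neq0$, while $X_0Y_0=N_0+\lambda_0\,\mathrm{Id}-vw$. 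Throughout I call a quantity \emph{lower-order} if it is a polynomial in $\{\Tr(C_l),\,wC_lv:\ l<k\}$; by Propositions~\ref{propwav1} and~\ref{propwav2} the trace (resp. the functional $w(\cdot)v$) of any closed path at $0$ of length $<2k$ is lower-order. The whole argument runs by induction on $k$, the case $k=1$ being immediate from $\Tr(X_qY_q)-\Tr(Y_{q-1}X_{q-1})=\Tr(X_qY_q-Y_{q-1}X_{q-1})$ together with the two displayed relations.

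The first step (``peeling'') shows $\Tr(L^{(k)}_q)\equiv\Tr(N_{q-k}^{\,k})$ modulo lower-order terms, for every $q$. Starting from $\Tr(L^{(k)}_q)$, move the final letter $X_{q-1}$ to the front by cyclicity and substitute the relevant relation for $X_{q-1}Y_{q-1}$: the scalar term yields $\lambda_{q-1}\Tr(L^{(k-1)}_{q-1})$, the trace of a shorter ladder (lower-order by the induction on $k$ and Proposition~\ref{propwav2}); the extra summand $-vw$, present only when $q-1=0$, yields $w(\cdot)v$ of a length-$(2k-2)$ closed path at $0$ (lower-order by Proposition~\ref{propwav1}); and the leading term is $\Tr(N_{q-1}L^{(k-1)}_{q-1})$. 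The same manoeuvre, applied to the outermost surviving rung at each stage, converts $\Tr(N^{\,j}_{q-j}L^{(k-j)}_{q-j})$ into $\Tr(N^{\,j+1}_{q-j-1}L^{(k-j-1)}_{q-j-1})$ up to lower-order terms, the corrections being scalar multiples of $\Tr(N^{\,a}_{q-j-1}L^{(k-j-1)}_{q-j-1})$ with $a<j+1$ (traces of closed paths of length $<2k$) and, at the vertex $0$, a single $w(\cdot)v$-term; after $k$ iterations the ladder factor disappears and only $\Tr(N_{q-k}^{\,k})$ survives.

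The second step is the shift-invariance $\Tr(N_r^{\,k})\equiv\Tr(N_{r-1}^{\,k})$ modulo lower-order terms: indeed $\Tr(N_r^{\,k})=\Tr\big((Y_{r-1}X_{r-1})^{k}\big)=\Tr\big((X_{r-1}Y_{r-1})^{k}\big)$ by cyclicity, and substituting $X_{r-1}Y_{r-1}=N_{r-1}+\lambda_{r-1}\,\mathrm{Id}$ (with an added $-vw$ when $r-1=0$) and expanding the $k$-th power, every summand other than $N_{r-1}^{\,k}$ is a scalar multiple of $\Tr(N_{r-1}^{\,j})$ with $j<k$ or a $w(\cdot)v$-term at $0$, again lower-order. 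Hence $\Tr(N_r^{\,k})$ is independent of $r$ up to lower-order terms, and combining the two steps gives $\Tr(C_{i,k})=\Tr(L^{(k)}_{m-i})\equiv\Tr(N_{m-i-k}^{\,k})\equiv\Tr(N_{-k}^{\,k})\equiv\Tr(L^{(k)}_0)=\Tr(C_k)$ modulo lower-order terms, which is the assertion. The main obstacle is the bookkeeping in the peeling step: one must verify that each commutation strictly decreases the path length, so that every correction falls under the inductive hypothesis, and in particular that the unique place where the vertex-$0$ relation \eqref{qv1} intervenes contributes exactly the $wC_lv$-type functionals allowed in the statement rather than longer paths or winding cycles built from $A$ and $B$ — the latter never occurring because a ladder of depth $k<m$ never closes a full loop.
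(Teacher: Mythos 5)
Your overall architecture (induction on $k$, base case $k=1$, using \eqref{qv1}--\eqref{qv2} to transport the ladder to the base vertex) matches the paper's, but your choice of intermediate normal form opens a gap that your citations do not close. Both of your reduction steps produce corrections that are \emph{traces} of closed paths based at vertices other than $0$, and which are not of the ladder form $C_{i,l}$: in the peeling step these are the mixed terms $\Tr\bigl(N_{q-j-1}^{\,a}L^{(k-j-1)}_{q-j-1}\bigr)$ with $1\le a\le j$, and in the shift step the pure powers $\Tr\bigl(N_{r-1}^{\,j}\bigr)$ with $2\le j<k$. You declare these lower-order via the claim that ``the trace of any closed path at $0$ of length $<2k$ is lower-order, by Propositions~\ref{propwav1} and~\ref{propwav2}.'' This claim is doubly problematic. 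First, those two propositions reduce only the functionals $w(\cdot)v$ of closed paths based at $0$; neither has a trace on its left-hand side, so they never convert a trace into lower-order data. The statement that does this is Proposition~\ref{trantrwv}, which in the paper comes \emph{after} Lemma~\ref{lemmatrik} and whose proof invokes the lemma, so appealing to it here would be circular. Second, even granting such a conversion at vertex $0$, your corrections are based at the vertices $q-j-1$ and $r-1$, which are in general nonzero, and a path such as $N_{q'}^{\,a}L^{(k')}_{q'}$ need not pass through vertex $0$ at all, so no cyclic permutation brings it to a closed path at $0$; neither proposition, nor the lemma's inductive hypothesis (which covers only the pure ladders $C_{i,l}$, $l<k$), applies to it. Incidentally, your blanket claim is also false as stated without a balancedness restriction: for $2k>m$ the winding path $A$ is closed at $0$ of length $m<2k$, yet $\Tr(A)$ and $wAv=H^{1,0}_0$ are generators independent of $\{\Tr(C_l),wC_lv\}_{l<k}$; your closing remark about winding cycles shows you sensed this, but it has to be built into the induction rather than asserted.

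The paper's proof avoids all of this by different bookkeeping: instead of peeling the ladder down to a power $N^{\,k}$ of a single rung, it shifts the \emph{whole} ladder one vertex at a time, proving $\Tr(C_{i,k})=\Tr(C_{i+1,k})+\bigl(\sum_{s=i+1}^{i+k}\lambda_{m-s}\bigr)\Tr(C_{i+1,k-1})$ in the non-wrapping case, by always applying the relation to the unique inverted pair $X_rY_r$ in the word; consequently every scalar correction is a \emph{pure} shorter ladder $\Tr(C_{i+1,k-1})$ --- exactly what the induction on $k$ covers --- and the only other corrections (arising when the ladder crosses vertex $0$) are $w(\cdot)v$ functionals of balanced closed paths at $0$, which Proposition~\ref{propwav1} does handle. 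Your route is salvageable, but only by strengthening the inductive statement: for instance, prove by induction on $l<k$ that the trace of \emph{any} balanced closed word of half-length $l$, based at \emph{any} vertex, is a polynomial in $\{\Tr(C_s),\,wC_sv\}_{s\le l}$ (sorting such a word with \eqref{qv1}--\eqref{qv2} reduces it to $\Tr(C_{i',l})$ plus strictly shorter balanced corrections and $w(\cdot)v$ terms at $0$, after which the lemma's induction applies). As written, the proposal leaves unproved precisely the reduction in which the lemma's content lies.
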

 \begin{proof}
 We prove by induction on $k$. For $k=1$, we use \eqref{qv2} and have
 \begin{eqnarray*}
  \Tr(C_{i,1}) &=& \Tr(Y_{m-i-1}X_{m-i-1})=\Tr(X_{m-i}Y_{m-i})-\lambda_{m-i}n_{m-i} \\[0.1cm]
  &=&\ldots = \Tr(C_1)-(\lambda_{m-i}n_{m-i}+ \ldots +\lambda_{m-1}n_{m-1}) \ . 
    \end{eqnarray*}
  Now, for $k\ge 2$ and $i+k< m$, we have
 \begin{eqnarray*}
\Tr(C_{i,k}) &=& \Tr(C_{i+1,k-1} X_{m-i-1} Y_{m-i-1})\\[0.1cm]
&=& \Tr(C_{i+1,k-1}Y_{m-i-2}X_{m-i-2})+\lambda_{m-i-1} \Tr(C_{i+1,k-1})= \ldots \\[0.1cm]
&=&\Tr(C_{i+1,k})+\sum_{s=i+1}^{i+k}\lambda_{m-s}\, \Tr(C_{i+1,k-1})
\end{eqnarray*}
Thus, for $i+k \le m$, we get 
\begin{eqnarray*}
\Tr(C_{i,k})&=&\Tr(C_{i-1,k})-\sum_{s=i}^{i+k-1}\lambda_{m-s}\, \Tr(C_{i,k-1})= \ldots \\[0.1cm]
&=& \Tr(C_k) - \sum_{t=1}^i\,\sum_{s=t}^{t+k-1}\lambda_{m-s}\, \Tr(C_{t,k-1})
\end{eqnarray*}
and then the statement easily follows by the induction assumption.

 Next, for $i+k\ge m$, after using \eqref{qv1}, \eqref{qv2} and Proposition~\ref{propwav1}, we obtain
 \begin{eqnarray*}
\Tr(C_{i,k}) &=& \Tr(Y_{m-i-2}\ldots Y_0\,C_{i+k-m} \,X_0 \ldots X_{m-i-2} X_{m-i-1}Y_{m-i-1})\\[0.1cm]
&=&\Tr(Y_{m-i-2}\ldots Y_0\, C_{i+k-m}\,X_0  \ldots X_{m-i-2} Y_{m-i-2}X_{m-i-2})\\[0.1cm]
&+&\lambda_{m-i-1}\,\Tr(C_{i+1,k-1}) = \ldots \\[0.1cm]
&=&\Tr(Y_{m-i-2}\ldots Y_0\, C_{i+k-m}\, Y_{m-1} X_{m-1}  X_0\ldots X_{m-i-2} )\\[0.1cm]
&+&(\lambda_0+ \lambda_1+\ldots \lambda_{m-i-1})\,\Tr(C_{i+1,k-1})\\[0.1cm]
&-&w( X_0\ldots X_{m-i-2} \, Y_{m-i-2}\ldots Y_0\, C_{i+k-m})v\\[0.1cm]
&=& \Tr(C_{i+1,k})+c_1\cdot \Tr(C_{i+1,k-1})-f(w\,C_{s}\, v)_{s<k}
\end{eqnarray*}
for some $c_1\in \c$ and a polynomial $f_1$.
Then, for $i+k>m$, one has
\begin{eqnarray*}
\Tr(C_{i,k})&=&\Tr(C_{i-1,k})-c_1\cdot \Tr(C_{i,k-1})+f_1(w\,C_{s}\, v)_{s<k}
\end{eqnarray*}
 and, by induction,
 \begin{eqnarray*}
\Tr(C_{i,k})&=&\Tr(C_{i-1,k})+g_1(w\,C_{s}\, v,\Tr(C_{s}))_{s<k}\, .
\end{eqnarray*}
Since this holds for all $i$ such that $i+k>m$ we get
 \begin{eqnarray*}
\Tr(C_{i,k})&=&\Tr(C_{m-k,k})+g_2(w\,C_{s}\, v,\Tr(C_{s}))_{s<k}\, .
\end{eqnarray*}
Now if we use the first part of the proof for $\Tr(C_{m-k,k})$, then we obtain
 \begin{eqnarray*}
\Tr(C_{i,k})&=&\Tr(C_k)+g(w\,C_{s}\, v,\Tr(C_{s}))_{s<k}\, .
\end{eqnarray*}
This finishes our proof.
 \end{proof}
 \begin{proposition} 
 \label{trantrwv}
 For a closed path $a_n\ldots a_1$ at $0$ in $\Pi^{\tau}$, we have
$$(\lambda_0+\ldots +\lambda_{m-1})\,  \Tr(a_n\ldots a_1) = 
w(a_n\ldots a_1)v+ F\big( w(b_{n'} \ldots b_1)v \big),$$
where $b_{n'} \ldots b_1$'s  are closed paths at $0$ with $n'<n$.
 \end{proposition}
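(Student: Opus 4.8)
The plan is to argue by induction on the length $n$ of the closed path, using the defining relations \eqref{qv1}--\eqref{qv3} to trade a trace for the functional $w(-)v$. First I would reduce to the case of a standard path: by Proposition~\ref{propwav2} together with Lemma~\ref{lemmatrik} and the cyclic invariance of $\Tr$, it suffices to prove the identity when $a_n\ldots a_1=A^iC_kB^j$, all the error produced by this reduction being values of $w(-)v$ (and $\Tr$) on strictly shorter closed paths, which are controlled by the inductive hypothesis.

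For such $M:=A^iC_kB^j$ the key step is to insert the vertex-$0$ relation \eqref{qv1}, written as $\lambda_0 I_{\alpha_0}=X_0Y_0-Y_{m-1}X_{m-1}+vw$, into $\lambda_0\Tr(M)=\Tr(\lambda_0 I_{\alpha_0}\cdot M)$. The rank-one term $vw$ contributes exactly $\Tr(vwM)=wMv$, the desired leading functional, while the two remaining traces are closed paths based at the vertices $1$ and $m-1$. I would then telescope one of these around the cycle: using \eqref{qv2} at the vertices $1,2,\ldots,m-1$ in turn, each substitution $Y_{i-1}X_{i-1}=X_iY_i-\lambda_i I_{\alpha_i}$ (applied after a cyclic rotation making the relevant pair adjacent) peels off a term proportional to $\lambda_i\Tr(M)$ --- the rotation identifying the shortened trace with $\Tr(M)$ up to strictly shorter paths --- and advances the distinguished pair to the next vertex. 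Returning to vertex $0$ and applying \eqref{qv1} once more closes the cycle: the genuinely length-$n$ trace that reappears cancels the one we began telescoping, and \eqref{qv1} simultaneously emits a second copy of $wMv$ together with the term $\lambda_0\Tr(M)$.

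Collecting contributions, the length-$n$ traces cancel in pairs and what survives is precisely $(\lambda_0+\cdots+\lambda_{m-1})\Tr(M)=wMv+(\text{terms of length}<n)$, the lower terms being $\Tr$ and $w(-)v$ evaluated on strictly shorter closed paths at $0$; feeding the shorter $\Tr$-terms back into the inductive hypothesis, and normalizing the shorter $w(-)v$-terms by Proposition~\ref{propwav1}, expresses everything through $w(b_{n'}\ldots b_1)v$ with $n'<n$, as required. The main obstacle I anticipate is bookkeeping: the telescoping rotates the path rather than shortening it, so every term that must ultimately cancel or combine is a priori of the full length $n$, and one has to verify carefully that each cyclic rotation used agrees with $\Tr(M)$ (respectively $wMv$) modulo strictly shorter paths and that the scalar coefficients and signs assemble to exactly $\sum_i\lambda_i$ on one side and a single $wMv$ on the other. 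Note finally that genericity of $\lambda$ forces $\sum_i\lambda_i=\lambda\cdot\delta\neq 0$, so the resulting identity can be solved for $\Tr(M)$ when it is used to prove Theorem~\ref{genfuncmain}.
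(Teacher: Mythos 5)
Your overall strategy --- induct on length, insert \eqref{qv1} to create the leading term $wMv$, then telescope around the cycle with \eqref{qv2} --- is the same as the paper's, and it does succeed for the pure winding paths $M=A^i$ and $M=B^j$: there the cyclic wrap of the word $X_0Y_0M$ (resp.\ $Y_{m-1}X_{m-1}M$) supplies the needed pair $Y_0X_0$ (resp.\ $X_{m-1}Y_{m-1}$), the $m-1$ substitutions of \eqref{qv2} go around once, and the two length-$(n+2)$ traces cancel with \emph{no} second use of \eqref{qv1} at all, giving the exact identity $wA^iv=(\lambda_0+\cdots+\lambda_{m-1})\Tr(A^i)$. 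The genuine gap is that your telescoping scheme cannot be carried out on the paths where the statement is actually hard, namely those containing a $C_k$ factor. Already for $M=C_1=Y_{m-1}X_{m-1}$ the cyclic word $X_0Y_0Y_{m-1}X_{m-1}$ has only the consecutive products $X_0Y_0$ and $Y_{m-1}X_{m-1}$ (both governed by \eqref{qv1}) together with the non-pairs $Y_0Y_{m-1}$ and $X_{m-1}X_0$; no cyclic rotation ever makes a pair $Y_{i-1}X_{i-1}$ or $X_iY_i$ with $i\neq 0$ adjacent, so your chain of \eqref{qv2} substitutions ``at the vertices $1,2,\ldots,m-1$ in turn'' cannot even begin. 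The same happens for any $A^iC_kB^j$ with $k\geq 1$: the only admissible moves are \eqref{qv1} at the vertex-$0$ junctions and \eqref{qv2} at the turning point $Y_{m-k}X_{m-k}$ buried inside $C_k$.

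Consequently the true computation must thread through the internal structure of $M$ and crosses vertex $0$ a path-dependent number of times; each crossing emits, via the $vw$ term of \eqref{qv1}, a functional $w(\cdot)v$ of a \emph{full} length-$n$ path different from $M$, and the deleted-pair traces produced along the way are the shifted cycles $C_{t,k}$, which are not cyclic rotations of $C_k$ (they use different edges), so identifying them with $\Tr(C_k)$ modulo shorter paths is the content of Lemma~\ref{lemmatrik}, not of trace invariance. After Proposition~\ref{propwav1} converts the emitted functionals into $wC_kv$ modulo lower order, the identity the paper actually reaches for $M=C_k$ is $(k+1)\,wC_kv=(k+1)(\lambda_0+\cdots+\lambda_{m-1})\Tr(C_k)+(\text{lower order})$, so the multiplicity is $k+1$ and one divides by it; your accounting of exactly one ``second copy of $wMv$'' and traces ``cancelling in pairs'' fixes the multiplicity at $2$ independently of the path and is false for $k\geq 2$. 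This path-dependent multiplicity, handled by Lemma~\ref{lemmatrik} and the long computation in the paper's proof, is the heart of the argument, and it is exactly what your plan defers as bookkeeping. A smaller point: your reduction to standard form cites Proposition~\ref{propwav2}, which normalizes the functionals $w(\cdot)v$, not traces; a trace analogue of it would have to be proved separately (the paper sidesteps this by treating $A^i$, $B^j$, $C_k$ directly and asserting the general case is similar).
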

 \begin{proof}
 The shall prove by induction on $n$. Without loss of generality 
 we may assume $a_n\ldots a_1$ is either $A^i$ or $B^j$ or $C_k$, 
 since the general case can proved in a similar fashion.
 
 By multiplying \eqref{qv1} by $A^i$ and then taking the trace, we get
$$ wA^i v= \lambda_0 \, \Tr(A^i) + \Tr(A^iY_{m-1}X_{m-1}) - \Tr(A^iX_0Y_0)\, .$$
Then using \eqref{qv2}, we have
\begin{eqnarray*}
\Tr(A^iX_0Y_0) &=& \Tr [ A^{i-1} Y_{m-1}\,...\,Y_1(Y_0 X_0) Y_0] \\[0.1cm]
&=&\Tr [ A^{i-1} Y_{m-1}\,...\,(Y_1X_1)Y_1Y_0] -\lambda_1\, \Tr (A^i)\\[0.1cm]
&=& \ldots = \Tr ( A^{i-1} Y_{m-1}X_{m-1} \, A) -(\lambda_1+\ldots \lambda_{m-1})\, \Tr (A^i)\\[0.1cm]
&=& \Tr(A^i  Y_{m-1}X_{m-1}) - (\lambda_1+\ldots \lambda_{m-1})\, \Tr (A^i)\, .
\end{eqnarray*}
This implies%
$$ w A^i v=(\lambda_0+ \ldots +\lambda_{m-1}) \, \Tr(A^i)\, .$$
 Similarly, we can show 
 $$w B^j v=(\lambda_0+ \ldots +\lambda_{m-1}) \, \Tr(B^j)$$
 and 
 \begin{equation}
 \label{wckv4}
wC_k v= \lambda_0 \, \Tr(C_k) + \Tr(C_kY_{m-1}X_{m-1}) - \Tr(C_kX_0Y_0)\, .
\end{equation}
Now $\lambda_0 \, \Tr(C_k) + \Tr(C_kY_{m-1}X_{m-1})$ is equal to
\begin{eqnarray*}
 && \lambda_0 \, \Tr(C_k) +\Tr(Y_{m-1} \ldots Y_{m-k} X_{m-k} \ldots X_{m-1} Y_{m-1} X_{m-1})\nonumber \\[0.1cm]
 &=&\Tr(C_{k+1})+\sum_{s=0}^k \lambda_{m-s}\, \Tr(C_k)=\Tr(C_{1,k}X_{m-1}Y_{m-1})+
 \sum_{s=0}^k \lambda_{m-s}\, \Tr(C_k)\nonumber \\[0.1cm]
 &=&\Tr(C_{2,k}X_{m-2}Y_{m-2})+\sum_{s=1}^{k+1} \lambda_{m-s}\, 
 \Tr(C_{1,k})+\sum_{s=0}^k \lambda_{m-s}\, \Tr(C_k)\nonumber \\[0.1cm]
 &=&\Tr(C_{m-k,k}X_{k}Y_{k})+\sum_{t=0}^{m-k-1}\,\sum_{s=t}^{t+k} \lambda_{m-s}\,  
 \Tr(C_{t,k}) = \Tr(C_{k-1}X_0Y_0)\nonumber\\[0.1cm]
 &-& \sum_{t=0}^{k-1} w(X_0 \ldots X_{t}
 Y_{t}\ldots Y_0 Y_{m-1}\ldots Y_{m-k+t+1} X_{m-k+t+1}\ldots X_{m-1})v \nonumber\\[0.1cm]
 &+&\sum_{t=0}^{m-k}\,\sum_{s=t}^{t+k} \lambda_{m-s}\,  
 \Tr(C_{t,k}) +\sum_{t=m-k+1}^{m-1} \bigg( \sum_{s=t}^{m} \lambda_{m-s}
 +\sum_{s=1}^{t-m+k} \lambda_{m-s}\bigg) \Tr(C_{t,k}).
 \end{eqnarray*}
Using Lemma~\ref{lemmatrik}, Proposition~\ref{propwav1},
the induction assumption and the following simple identity
 $$\sum_{t=0}^{m-k}\,\sum_{s=t}^{t+k} \lambda_{m-s}
 +\sum_{t=m-k+1}^{m-1} \bigg( \sum_{s=t}^{m} \lambda_{m-s}
 +\sum_{s=1}^{t-m+k} \lambda_{m-s}\bigg)=(k+1)\sum_{s=0}^{m-1} \lambda_s $$ %
 we obtain that, for  $\lambda_0 \, \Tr(C_k) + \Tr(C_kY_{m-1}X_{m-1})$,
 $$ \Tr(C_{k-1}X_0Y_0)-k \cdot wC_kv+
 (k+1)\sum_{s=0}^{m-1} \lambda_s \cdot \Tr(C_k)+F(w C_l v)_{l<k}\, \ .$$
 Finally, by substituting the last expression into \eqref{wckv4} we obtain our statement.
 %
%
%
 \end{proof}
 
\begin{proof}[Proof of Theorem~\ref{genfuncmain}]
We know that the set $\{\Tr(a_n\ldots a_1)\}$, where $a_n\ldots a_1$ run over all
closed paths at $0$, generate $\mathcal{O}(\mathfrak{M}^{\tau})$.
Then, by Proposition~\ref{trantrwv},  $\mathcal{O}(\mathfrak{M}^{\tau})$
is generated by $\{w(a_n\ldots a_1)v\}$. Next, by Proposition~\ref{propwav1} and \ref{propwav2},
$\{w(A^i C_k B^j)v\}$ are generators of the algebra. Finally, using  Proposition~\ref{trantrwv}
for $w(A^i C_k B^j)v$, we get $\{\Tr(A^i C_k B^j)\}$ generate 
$\mathcal{O}(\mathfrak{M}^{\tau})$.
\end{proof}

\bibliographystyle{amsalpha}

\end{document}